\newtheorem{thm}{Theorem}[section]
\newtheorem{lem}[thm]{Lemma}
\theoremstyle{remark}
\numberwithin{equation}{section}
\newcommand\numberthis{\addtocounter{equation}{1}\tag{\theequation}}
\title{On the limiting law of the length of the longest common and increasing subsequences in random words with arbitrary distributions}
\author{Clément Deslandes\footnote{C.M.A.P. Ecole Polytechnique, Palaiseau, 91120, France \& Georgia Institute of Technology, Atlanta, GA, 30332, USA (\texttt{clement.deslandes@poytechnique.edu}).}\qquad Christian Houdré\footnote{School of Mathematics, Georgia Institute of Technology, Atlanta, GA, 30332, USA (\texttt{houdre@math.gatech.edu}).} \footnote{Research supported in part by the 
grant $\sharp 524678$ from the Simons Foundation.
\newline\indent
Keywords:  Random Words, Longest Common Subsequences, Longest Increasing Subsequences, Weak Convergence, 
Optimal Alignment, Last Passage Percolation, Random Matrices.
\newline\indent
MSC 2010: 05A05, 60C05, 60F05.}}
\newcommand\ens[1]{\{1,\dots,#1\}}
\begin{document}
\maketitle

\begin{abstract}
Let $(X_k)_{k\geq 1}$ and $(Y_k)_{k\geq 1}$ 
be two independent sequences of i.i.d.~random variables, with values in a finite and totally 
ordered alphabet $\mathcal{A}_m:=\{1,\dots,m\}$, $m\ge 2$, having respective probability mass function  
$p^X_1,\dots,p^X_m$ and $p^Y_1,\dots,p^Y_m$. 
Let $LCI_n$ be the length of the longest common and weakly increasing subsequences 
in $X_1,...,X_n$ and $Y_1,...,Y_n$. Once properly centered and normalized, $LCI_n$ is shown 
to have a limiting distribution which is expressed as a functional of two independent multidimensional Brownian motions.
\end{abstract}

\section{Introduction and preliminary results}

\subsection{Introduction}

We analyze the asymptotic behavior of $LCI_n$, the length of the longest common subsequences in random words with an additional weakly increasing requirement. Throughout,  $(X_k)_{k\geq 1}$ and 
$(Y_k)_{k\geq 1}$ are two 
independent sequences of i.i.d.~random variables with values in the finite totally ordered 
alphabet $\mathcal{A}_m:=\{1,\dots,m\}$, $m\ge 2$, and respective pmf $p^X_1,\dots,p^X_m$, $p^X_i>0$, $i=1,\dots,m$ 
and $p^Y_1,\dots,p^Y_m$, $p^Y_i>0$, $i=1,\dots,m$.   
Next, $LCI_n$, the length of the longest common and weakly increasing subsequences of the two random words 
$X_1\cdots X_n$ and $Y_1\cdots Y_n$, is the largest integer $r\in\ens{n}$ such that there exist $1\leq i_1<\dots<i_{r}\leq n$ and $1\leq j_1<\dots<j_{r}\leq n$ such that
\begin{itemize}
\item $\forall s \in \ens{r}$, $X_{i_s}=Y_{j_s}$,
\item $X_{i_1}\leq X_{i_2}\leq \dots\leq X_{i_{r}}$ and $Y_{j_1}\leq Y_{j_2}\leq \dots\leq Y_{j_{r}}$,
\end{itemize}
and if no integer satisfies these two conditions, we set $LCI_n=0$.

A thorough discussion of the study of $LCI_n$, with potential applications, and a more complete bibliography, 
is present in \cite{breton2017limiting}, where the following is further proved (below, as usual, $\wedge$ is short for minimum):
\begin{thm}\label{unfthm} Let $X_k$ and $Y_k$ ($k=1,2,\dots$) be uniformly distributed over $\ens{m}$. Then,
\begin{multline}
 \frac{LCI_n-n/m}{\sqrt{n/m}}\xRightarrow[n \to \infty]{}\max_{0=t_0\leq t_1\leq\dots\leq t_m=1}  \Bigg[\left(-\frac{1}{m}\sum_{i=1}^m B^X_i(1)+\sum_{i=1}^m \left(B^X_i(t_i)-B^X_i(t_{i-1})\right)\right)\wedge \\ \left(-\frac{1}{m}\sum_{i=1}^m B^Y_i(1)+\sum_{i=1}^m \left(B^Y_i(t_i)-B^Y_i(t_{i-1})\right)\right)\Bigg],
 \end{multline}
where $B^X$ and $B^Y$ are two independent $m$-dimensional standard Brownian motions on $[0,1]$.
\end{thm}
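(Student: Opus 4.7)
\emph{Step 1 (combinatorial reduction).} A common weakly increasing subsequence of $X_1\cdots X_n$ and $Y_1\cdots Y_n$ is specified, up to the choice of particular occurrences, by cut-points $0=a_0\leq a_1\leq\cdots\leq a_m=n$ in $X$ and $0=b_0\leq b_1\leq\cdots\leq b_m=n$ in $Y$ such that all selected $i$'s lie in $(a_{i-1},a_i]$ in $X$ and in $(b_{i-1},b_i]$ in $Y$. Maximizing the number of $i$'s chosen for fixed cuts gives the identity
\begin{equation}\label{eq:comb}
LCI_n=\max_{(a_i),(b_i)}\sum_{i=1}^m N_i^X(a_{i-1},a_i]\wedge N_i^Y(b_{i-1},b_i],
\end{equation}
where $N_i^X(a,b]:=\#\{a<k\leq b:X_k=i\}$, and analogously for $Y$.

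\emph{Step 2 (Brownian scale and Donsker).} I parametrize $a_i=\lfloor n\tau_i+\sqrt{n}\,\eta_i\rfloor$, $b_i=\lfloor n\tau_i+\sqrt{n}\,\zeta_i\rfloor$ with $0=\tau_0\leq\cdots\leq\tau_m=1$ and $\eta_0=\eta_m=\zeta_0=\zeta_m=0$. Since $\sum_i\min(a_i-a_{i-1},b_i-b_{i-1})=n-\tfrac{1}{2}\sum_i|(a_i-a_{i-1})-(b_i-b_{i-1})|$ while each $N_i^X(a_{i-1},a_i]$ equals $(a_i-a_{i-1})/m+O_{\mathbb P}(\sqrt n)$, any configuration with some $|a_i-b_i|\gg\sqrt n$ incurs a deterministic deficit of order $|a_i-b_i|/m$ in \eqref{eq:comb} and is therefore suboptimal with high probability; the outer maximization is confined to the $\sqrt n$-scale perturbation regime. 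Donsker's invariance principle for the coordinatewise processes $W_i^X(t):=(\#\{k\leq nt:X_k=i\}-nt/m)/\sqrt{n/m}$ (and $W_i^Y$ analogously) identifies the joint limit of $(W^X,W^Y)$ as two independent copies of the projected Brownian motion $W_i=B_i-\tfrac{1}{m}\sum_{j=1}^m B_j$, with $B$ an $m$-dimensional standard Brownian motion. Tightness together with the continuous mapping theorem then yields
\begin{equation}\label{eq:prelim}
\frac{LCI_n-n/m}{\sqrt{n/m}}\Rightarrow\max_{(\tau_i),(\eta_i),(\zeta_i)}\sum_{i=1}^m\left[\frac{\eta_i-\eta_{i-1}}{\sqrt{m}}+W_i^X(\tau_i)-W_i^X(\tau_{i-1})\right]\wedge\left[\frac{\zeta_i-\zeta_{i-1}}{\sqrt{m}}+W_i^Y(\tau_i)-W_i^Y(\tau_{i-1})\right].
\end{equation}

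\emph{Step 3 (inner optimization).} For fixed $(\tau_i)$, using $\min(x,y)=\tfrac{1}{2}(x+y)-\tfrac{1}{2}|x-y|$, the inner maximization over $(\eta_i),(\zeta_i)$ reduces to minimizing $\sum_i|a_i-b_i+u_i-v_i|$ where $a_i=W_i^X(\tau_i)-W_i^X(\tau_{i-1})$, $b_i=W_i^Y(\tau_i)-W_i^Y(\tau_{i-1})$ and $u_i,v_i$ are the telescoped perturbations subject to $\sum u_i=\sum v_i=0$. The triangle inequality evaluates this minimum to $|\sum_i(a_i-b_i)|$, attained by the affine choice $u_i=-a_i+\tfrac{1}{m}\sum_j a_j$ and similarly for $v_i$, so the inner maximum collapses to $\bigl(\sum_i a_i\bigr)\wedge\bigl(\sum_i b_i\bigr)$ and only the outer maximum over $(\tau_i)$ remains. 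Substituting $W_i^X=B_i^X-\tfrac{1}{m}\sum_j B_j^X$, the sum telescopes to $\sum_i\bigl(B_i^X(\tau_i)-B_i^X(\tau_{i-1})\bigr)-\tfrac{1}{m}\sum_{j=1}^m B_j^X(1)$, symmetrically for $Y$, matching the statement.

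\emph{Expected obstacle.} The main technical difficulty lies in Step 2: turning the $O(n)$-size lattice optimization \eqref{eq:comb} into the $\sqrt n$-scale continuous maximum \eqref{eq:prelim} uniformly over all cut-point configurations. A lower bound for the asymptotic law follows from the functional CLT applied along any fixed near-diagonal parametrization; the matching upper bound, however, requires uniform control, in particular over "far" configurations where $|a_i-b_i|$ is of larger order than $\sqrt n$. I would handle this by pairing the deterministic deficit estimate above with Hoeffding-type tail bounds on the centred counts, showing that such far configurations contribute $o_{\mathbb P}(\sqrt n)$, and then invoking a Skorokhod continuous-mapping argument on the restricted problem. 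Steps 1 and 3 are combinatorial and algebraic, respectively, and pose no difficulty.
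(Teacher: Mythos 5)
Your proposal is correct and follows essentially the same route as the paper: the paper likewise represents $LCI_n$ as a maximum over cut-point configurations, uses a Hoeffding-based event to get uniform $O(n^{1/2+\eta})$ control of the centred counts, shows via the same deterministic deficit argument that the optimizing configurations must lie within $O(\sqrt n)$ of the diagonal set $K_{\Lambda^2}$ (its projection Lemma), and then collapses the inner optimization exactly as in your Step 3 (its function $\mathfrak{m}(\nu)=S^X\wedge S^Y$ in this case), before invoking Donsker and rewriting the covariance-$(\delta_{ij}-1/m)$ Brownian motion as $B_i-\frac1m\sum_jB_j$. The only difference is organizational: the paper proves a general theorem for arbitrary distributions and obtains the uniform case as a corollary, whereas you specialize from the start, which is why your inner optimization is a one-line triangle-inequality computation rather than the paper's general Lemma on $\mathfrak{m}$.
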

The results of \cite{breton2017limiting} extended (and corrected) the proof of  
the case $m=2$ analyzed in \cite{HLM} and also conjectured the following generalization:
\begin{thm}\label{samedistthm}Let $X_k$ and $Y_k$ ($k=1,2,\dots$) have the same distribution, let $p_{\max}=\max_{i\in\ens{m}} p^X_i$ and let $k^*$ be its multiplicity. Then
\begin{multline}
 \frac{LCI_n-np_{\max}}{\sqrt{np_{\max}}}\xRightarrow[n \to \infty]{}\max_{0=t_0\leq t_1\leq\dots\leq t_{k^*}=1} \Bigg[\left(\frac{\sqrt{1-k^*p_{\max}}-1}{k^*}\sum_{i=1}^{k^*} B^X_i(1)+\sum_{i=1}^{k^*} \left(B^X_i(t_i)-B^X_i(t_{i-1})\right)\right)\wedge \\ \left(\frac{\sqrt{1-{k^*}p_{\max}}-1}{k^*}\sum_{i=1}^{k^*} B^Y_i(1)+\sum_{i=1}^{k^*} \left(B^Y_i(t_i)-B^Y_i(t_{i-1})\right)\right)\Bigg],
 \end{multline}
where $B^X$ and $B^Y$ are two independent $k^*$-dimensional standard Brownian motions on $[0,1]$.
\end{thm}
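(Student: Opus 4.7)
I would adapt the approach of \cite{breton2017limiting} (which established Theorem~\ref{unfthm} for the uniform case) to the present non-uniform setting. The starting point is the combinatorial identity
\[
LCI_n \;=\; \max_{(\tau^X,\tau^Y)}\sum_{i=1}^m \min\!\bigl(N_i^X(\tau^X_{i-1},\tau^X_i],\;N_i^Y(\tau^Y_{i-1},\tau^Y_i]\bigr),
\]
where the maximum runs over pairs of ordered integer partitions $0=\tau^X_0\le\cdots\le\tau^X_m=n$ (and likewise for $Y$), and $N_i^X(a,b]$ counts the occurrences of $i$ among $X_{a+1},\dots,X_b$. This holds because in any common and weakly increasing subsequence, the matches of each letter $i$ occupy contiguous ranges of indices in both $X$ and $Y$, with these ranges ordered by the value of $i$.

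\textbf{Reduction to the $k^*$ most frequent letters.} The main non-uniform ingredient is to argue that blocks of the partition allocated to letters $i\notin\{a_1,\dots,a_{k^*}\}$ contribute $o(\sqrt n)$ to $LCI_n$. A block of size $\ell$ dedicated to such a letter produces at most $p_i\ell+O(\sqrt\ell)$ matches, while re-allocating this block to an adjacent max-probability letter would produce $p_{\max}\ell+O(\sqrt\ell)$; a concentration argument then shows that, in any optimal partition, the total length of non-max blocks is bounded by a constant $C$ depending only on the gap $p_{\max}-\max_{i:\,p_i<p_{\max}} p_i$. Hence the supremum above may be restricted, modulo an $O(1)=o(\sqrt n)$ error, to partitions $0=\tau_0\le\tau_1\le\cdots\le\tau_{k^*}=n$ using only the letters $a_1,\dots,a_{k^*}$.

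\textbf{Donsker and the limit.} The multivariate invariance principle then gives, jointly in $j=1,\dots,k^*$ and $t\in[0,1]$,
\[
G_j^X(t):=\frac{N_{a_j}^X(\lfloor nt\rfloor)-ntp_{\max}}{\sqrt n}\;\Rightarrow\;Z_j^X(t),
\]
with $(Z_j^X)_{j=1}^{k^*}$ a centered continuous Gaussian process satisfying $\operatorname{Cov}(Z_j^X(s),Z_\ell^X(t))=(s\wedge t)(p_{\max}\delta_{j\ell}-p_{\max}^2)$, independent of the analogous $Z^Y$. A direct covariance computation verifies the representation
\[
Z_j^X(t)=\sqrt{p_{\max}}\,B_j^X(t)+\frac{\sqrt{p_{\max}}\bigl(\sqrt{1-k^*p_{\max}}-1\bigr)}{k^*}\sum_{i=1}^{k^*}B_i^X(t)
\]
in terms of $k^*$ independent standard Brownian motions. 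Parametrizing $\tau^X_j=nt_j+\sqrt n\,\sigma^X_j$ and $\tau^Y_j=nt_j+\sqrt n\,\sigma^Y_j$ (the common leading-order $t^X=t^Y=t$ being forced by the fact that any $\Theta(1)$ mismatch would depress the deterministic contribution by $\Theta(n)$, which no fluctuation can recover), Taylor expansion and continuous mapping yield
\[
\frac{LCI_n-np_{\max}}{\sqrt{np_{\max}}}\;\Rightarrow\;\frac{1}{\sqrt{p_{\max}}}\sup_{t,\sigma^X,\sigma^Y}\sum_{j=1}^{k^*}\min\!\bigl(p_{\max}\Delta_j\sigma^X+\Delta_j Z^X(t),\;p_{\max}\Delta_j\sigma^Y+\Delta_j Z^Y(t)\bigr),
\]
with $\Delta_j Z^X(t):=Z_j^X(t_j)-Z_j^X(t_{j-1})$ and $\sigma^X,\sigma^Y\in\mathbb R^{k^*+1}$ satisfying $\sigma_0=\sigma_{k^*}=0$. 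For each fixed $t$, the inner supremum in $(\sigma^X,\sigma^Y)$ equals $\min\bigl(\sum_j\Delta_j Z^X(t),\sum_j\Delta_j Z^Y(t)\bigr)$: the upper bound is the trivial $\sum\min(a,b)\le\min(\sum a,\sum b)$ (using $\sum_j\Delta_j\sigma^X=\sum_j\Delta_j\sigma^Y=0$), and, assuming WLOG $\sum_j\Delta_j Z^X(t)\le\sum_j\Delta_j Z^Y(t)$, equality is attained by $\sigma^X\equiv0$ and $p_{\max}\Delta_j\sigma^Y=\Delta_j Z^X(t)-\Delta_j Z^Y(t)+c$ with $c=\tfrac{1}{k^*}\sum_j\bigl(\Delta_j Z^Y(t)-\Delta_j Z^X(t)\bigr)\ge 0$. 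Substituting the Brownian representation of $Z^X,Z^Y$ and telescoping the $B^X_i,B^Y_i$-sums over $j$ reproduces the expression of Theorem~\ref{samedistthm}.

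\textbf{Main obstacle.} The technical heart of the proof lies in the reduction step, which is absent from \cite{breton2017limiting}: obtaining a rigorous, uniform-in-$n$ $o(\sqrt n)$ bound on the contribution of non-max letters requires a careful re-allocation argument combined with binomial concentration. A secondary subtlety is the max--min exchange in the last display: while algebraically a short calculation, it conceptually depends on the $O(1/\sqrt n)$ boundary perturbations $\sigma^X,\sigma^Y$ being freely adjustable independently on the $X$- and $Y$-sides, which is precisely what causes the coefficient $(\sqrt{1-k^*p_{\max}}-1)/k^*$ to emerge upon telescoping.
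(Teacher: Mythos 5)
Your architecture coincides with the paper's: the paper likewise starts from the representation of $LCI_n$ as a maximum over ordered partitions (see \eqref{introlambda}), reduces to the set $I$ of letters achieving $p_{\max}$, applies Donsker, and evaluates the inner optimization over the $O(\sqrt n)$ boundary perturbations. Your identity $\sup_{\sigma^X,\sigma^Y}\sum_j\min(\cdots)=\min\bigl(\sum_j\Delta_jZ^X,\sum_j\Delta_jZ^Y\bigr)$ is exactly the Case b1) evaluation $\mathfrak m(\nu)=S^X\wedge S^Y$ of Lemma~\ref{exprm} (with $s_X=s_Y=0$, which is automatic when $p^X=p^Y$), and your diagonalization of the covariance $(p_{\max}\delta_{j\ell}-p_{\max}^2)$, with the coefficient $(\sqrt{1-k^*p_{\max}}-1)/k^*$ solving $k^*c^2+2c+p_{\max}=0$, is the computation from \cite{HL1} that the paper invokes in Section 3.1. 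The only structural difference is that the paper proves the general Theorem~\ref{theoreme} for arbitrary $(p^X,p^Y)$ and obtains Theorem~\ref{samedistthm} as the specialization to Case b1) with $I=\{i:p_i=p_{\max}\}$, whereas you argue the equidistributed case directly; that is a legitimate shortcut.

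Two steps need repair. First, your reduction overclaims: the total length of non-max blocks in an optimal partition is \emph{not} $O(1)$. A block of length $\ell$ for a letter with $p_i<p_{\max}$ pays a deterministic penalty $(p_{\max}-p_i)\ell$ but, optimized over order $n$ possible locations, can harvest fluctuations of order $\sqrt{\ell\log n}$, so blocks of length up to $c\log n$ can be profitable and no constant bound holds. What the paper actually proves (cf.\ \eqref{boundofs}, and Lemma~\ref{proj} with \eqref{boundf} in Case b)) is that the non-$I$ mass is $O(n^{\eta-1/2})$ in the $\lambda$-scale, i.e.\ $O(n^{1/2+\eta})$ indices; one then does not discard these blocks outright but only their fluctuation terms, which cost $O\bigl(n^{\eta}\sqrt{n^{\eta-1/2}}\bigr)=o(1)$ after normalization by the H\"older-type control on increments over short stretches (see \eqref{ineqI}). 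Your conclusion survives, but not for the reason you give. Second, ``Taylor expansion and continuous mapping'' conceals the actual technical core: the supremum is taken over an $n$-dependent, a priori unbounded family of perturbations $\sigma$, and passing the weak limit through it requires a deterministic, uniform approximation statement valid on the whole modulus-of-continuity class $E^\eta_n$ --- this is Lemma~\ref{lemme}, whose proof occupies most of Section 2 (projection onto $K\cap P$, separation of the parameters, and the boundary-shrinkage set $P_n$ that legitimizes your free choice of $\sigma^Y$ even when some $t_{j-1}=t_j$, a constraint your max--min exchange silently ignores). Neither issue is fatal, but both are where the real work lies, and neither is dispatched by the arguments you sketch.
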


Clearly, in case $k^* = m$,   
the two limiting distributions in \eqref{unfthm} and \eqref{samedistthm} are the same 
but they differ otherwise.  Indeed, \eqref{unfthm} 
involves two independent $m$-dimensional Brownian motions while 
\eqref{samedistthm} involves $k^*$-dimensional ones.  
So, in particular, if $k^*=1$, then the right-hand side of (1.2) is just  
the minimum of two independent centered normal random variables.   In view of the results 
obtained in the one-sequence case, e.g., see \cite{HL1}, \cite{B-GH},  and the many references 
therein, it is tantalizing to conjecture that both the right-hand side of \eqref{unfthm} 
and of \eqref{samedistthm} can be realized as maximal eigenvalues of some 
Gaussian random matrix models.

Below, we aim to obtain the limiting distribution of $LCI_n$, without 
assuming that the $X_k$ and $Y_k$ ($k=1,2,\dots$) have the same distribution; providing also an alternative proof of 
Theorem~\ref{unfthm} as well as a proof of the conjectured  \eqref{samedistthm}.  
A brief description of the content of our notes is as follows: the rest of the current section is devoted to studying 
the asymptotic mean of $LCI_n$.  This asymptotic mean result is already not so predictable and allows for the proper centering in the  limiting theorem 
whose proof is provided in the next section.  The third and final section 
is mainly devoted to studying extensions and complements, 
such as results for sequences with blocks and infinite countable alphabets.  

\noindent
{\bf Acknowledgements:}  We sincerely thank an Associate Editor and a referee for their 
detailed readings and numerous comments which 
greatly helped to improve this manuscript.

\subsection{Probability}

For $i\in \ens{m}$ and $j\in\ens{n}$, let $\ell\in\mathbb{N}=\{0,1,2,\dots\}$ be such that $j+\ell\leq n+1$, and let \begin{equation*}
N^{X,i}_{j,\ell}=\sum_{k=0}^{\ell-1}\mathds{1}_{X_{j+k}=i}\qquad \left(\text{resp.}\:N^{Y,i}_{j,\ell}=\sum_{k=0}^{\ell-1}\mathds{1}_{Y_{j+k}=i}\right),
\end{equation*} 
be simply the number of letters $i$ between,  and including, $j$ and $j+\ell-1$ in $X_1,...,X_n$ (resp. $Y_1,...,Y_n$),  with the convention that the sum is zero in case $\ell=0$. 
From the very definition of $LCI_n$, it is clear that 
\begin{equation*}
LCI_n=\max_{\substack {\ell^X,\ell^Y\in\mathbb{N}^{m} \\ \ell^X_1 + \dots +\ell^X_m=n \\ \ell^Y_1 + \dots +\ell^Y_m=n}} \bigg( N^{X,1}_{1,\ell^X_1}\wedge N^{Y,1}_{1,\ell^Y_1}+N^{X,2}_{\ell^X_1,\ell^X_2}\wedge N^{Y,2}_{\ell^Y_1,\ell^Y_2}+\dots+N^{X,m}_{\ell^X_1+\dots+\ell^X_{m-1},\ell^X_m}\wedge N^{Y,m}_{\ell^Y_1+\dots+\ell^Y_{m-1},\ell^Y_m}\bigg).
\end{equation*}

Next, let $\Lambda=\{\lambda\in\left(\mathbb{R}_+\right)^{m}=[0,+\infty)^{m}\::\: \lambda_1+\dots+\lambda_m=1\}$. For $\lambda\in\Lambda$, let 
\begin{equation}
\ell^n(\lambda)_i=\lfloor (\lambda_1+\dots+\lambda_i)n\rfloor-\lfloor (\lambda_1+\dots+\lambda_{i-1})n\rfloor,
\end{equation}
where $\lfloor . \rfloor$ is the usual integer part, aka the floor, function. When $\lambda$ runs through $\Lambda$, $\ell^n(\lambda)=(\ell^n(\lambda)_1, \dots, \ell^n(\lambda)_m)$ runs exactly 
through $\left\{\ell\in\mathbb{N}^m \::\: \ell_1+\dots+\ell_m=n\right\}$, so

\begin{multline}\label{introlambda}
LCI_n=\max_{\substack {\lambda^X,\lambda^Y\in \Lambda}}\bigg( N^{X,1}_{1,\ell^n(\lambda^X)_1}\wedge N^{Y,1}_{1,\ell^n(\lambda^Y)_1}+N^{X,2}_{\ell^n(\lambda^X)_1,\ell^n(\lambda^X)_2}\wedge N^{Y,2}_{\ell^n(\lambda^Y)_1,\ell^n(\lambda^Y)_2}+\dots\\ +N^{X,m}_{\ell^n(\lambda^X)_1+\dots+\ell^n(\lambda^X)_{m-1},\ell^n(\lambda^X)_m}\wedge N^{Y,m}_{\ell^n(\lambda^Y)_1+\dots+\ell^n(\lambda^Y)_{m-1},\ell^n(\lambda^Y)_m}\bigg).
\end{multline}

For ease of notations, throughout the paper, for all $x\in\left(\mathbb{R}^m\right)^2$, we 
write $x=(x^X,x^Y)$ so, for example, above, $\lambda^X,\lambda^Y\in \Lambda$ becomes $\lambda\in\Lambda^2$.

For $i\in\ens{m}$ and $t\in [0,1]$, let now \begin{equation}\label{defB}
\widetilde{B}^{n,X}_i(t)=\frac{N^{X,i}_{1,\lfloor tn \rfloor}-p^X_i t n}{\sqrt{p^X_i(1-p^X_i)n}},\qquad \left(\text{resp.}\: \widetilde{B}^{n,Y}_i(t)=\frac{N^{Y,i}_{1,\lfloor tn \rfloor}-p^Y_i t n}{\sqrt{p^Y_i(1-p^Y_i)n}}\right),
\end{equation} and for $\lambda\in\Lambda^2$, let \begin{align}\label{defVX}
\widetilde{V}^{n,X}_i(\lambda^X)&=\sqrt{p^X_i(1-p^X_i)}\left(\widetilde{B}^{n,X}_i(\lambda^X_1+\dots+\lambda^X_i)-\widetilde{B}^{n,X}_i(\lambda^X_1+\dots+\lambda^X_{i-1})\right),\\\label{defVY}
\widetilde{V}^{n,Y}_i(\lambda^Y)&=\sqrt{p^Y_i(1-p^Y_i)}\left(\widetilde{B}^{n,Y}_i(\lambda^Y_1+\dots+\lambda^Y_i)-\widetilde{B}^{n,Y}_i(\lambda^Y_1+\dots+\lambda^Y_{i-1})\right),
\end{align} so that (\ref{introlambda}) becomes \begin{equation}\label{exprlcin}
LCI_n=\max_{\substack {\lambda\in \Lambda^2}} \sum_{i=1}^m\Bigg[\left(n p^X_i\lambda^X_i+\sqrt{n}\widetilde{V}^{n,X}_i(\lambda^X)\right)\wedge \left(n p^Y_i\lambda^Y_i+\sqrt{n}\widetilde{V}^{n,Y}_i(\lambda^Y)\right)\Bigg].
\end{equation}

The above identity provides a representation of $LCI_n$ as a maximum over the locations, $\lambda\in\Lambda^2$, where to pick in each word $X_1,\dots,X_n$ and $Y_1,\dots,Y_n$, the letters $1,2,\dots,m$ in order to form a common sub-word. This is  different from the approach in \cite{breton2017limiting}, where the maximum is over 
the numbers of letters $1,2,\dots,m$ in a common sub-word. Of course the two representations are equivalent. However, the advantage of our approach is that $\lambda$ takes its values in a deterministic set, as opposed to a random set.

In order to keep dealing with maxima it will be convenient to replace 
$\widetilde{B}^n_i$ in (\ref{defB}) by its continuous alternative: for $i\in\ens{m}$ and $t\in [0,1]$, 
let \begin{equation*}
B^{n,X}_i(t)=\frac{N^{X,i}_{1,\lfloor tn \rfloor}+(tn-\lfloor tn \rfloor)\mathds{1}_{X_{\lfloor tn \rfloor+1}=i} -p^X_i t n}{\sqrt{p^X_i(1-p^X_i)n}},\qquad \left(\text{resp.}\:B^{n,Y}_i(t)=\frac{N^{Y,i}_{1,\lfloor tn \rfloor}+(tn-\lfloor tn \rfloor)\mathds{1}_{Y_{\lfloor tn \rfloor+1}=i} -p^Y_i t n}{\sqrt{p^Y_i(1-p^Y_i)n}}\right).
\end{equation*} 
Next define $V^{n,X}$, $V^{n,Y}$ just as in (\ref{defVX}) and (\ref{defVY}), 
replacing $\widetilde{B}$ by $B$, and let\begin{equation*}
LCI^c_n=\max_{\substack {\lambda\in \Lambda^2}} \sum_{i=1}^m\Bigg[\left(n p^X_i \lambda^X_i+\sqrt{n}V^{n,X}_i(\lambda)\right)\wedge \left(n p^Y_i \lambda^Y+\sqrt{n}V^{n,Y}_i(\lambda)\right)\Bigg].
\end{equation*}

Our analysis rests upon estimating the variations of $B^{n,X}_i$ and of $B^{n,Y}_i$.  
To do so, let $\eta\in(0,1/6)$ and let $A_n^\eta$ be the event: 
\begin{equation*}
\forall i\in\ens{m}, \forall j \in \ens{n}, \forall \ell\in\{0,\dots,n+1-j\}, \left\vert \frac{N^{X,i}_{j,\ell}-p^X_i \ell}{\sqrt{n}}\right\vert\leq \frac{n^\eta}{2}\sqrt{\frac{\ell}{n}}\: \text{and}\:\left\vert \frac{N^{Y,i}_{j,\ell}-p^Y_i \ell}{\sqrt{n}}\right\vert\leq \frac{n^\eta}{2}\sqrt{\frac{\ell}{n}}.
\end{equation*}
By Hoeffding's  inequality, 
\begin{equation}\label{azuma}
1-\mathds{P}\left(A_n^\eta\right)\leq 2n(n+1)m \exp\left(-\frac{n^{2\eta}}{2}\right),
\end{equation}and so if $A_n^\eta$ occurs, then for all $x,y$ in $[0,1]$ and $i\in\ens{m}$, \begin{equation*}
\left\vert \sqrt{p^X_i(1-p^X_i)}\left(B^{n,X}_i(y)-B^{n,X}_i(x)\right)\right\vert \leq \frac{n^\eta}{2}\sqrt{\vert y-x\vert +\frac{1}{n}}.
\end{equation*}
and in particular,
\begin{equation*}
\left\vert \sqrt{p^X_i(1-p^X_i)}\left(B^{n,X}_i(y)-B^{n,X}_i(x)\right)\right\vert 
\leq \frac{n^\eta}{2}\sqrt{\vert y-x\vert}+\frac{n^{\eta-1/2}}{2}\leq n^\eta,
\end{equation*} 
and the same applies to $Y$ instead of $X$.

\subsection{Asymptotic mean: distinct cases}\label{secamean}

Let us investigate the limiting behavior of $LCI_n/n$. From (\ref{exprlcin}),
\begin{equation*}
\frac{LCI_n}{n}=\max_{\substack {\lambda\in \Lambda^2}} \sum_{i=1}^m\left[\left(p^X_i\lambda^X_i+\frac{\widetilde{V}^{n,X}_i(\lambda^X)}{\sqrt{n}}\right)\wedge \left(p^Y_i\lambda^Y_i+\frac{\widetilde{V}^{n,Y}_i(\lambda^Y)}{\sqrt{n}}\right)\right].
\end{equation*}
Note that $\vert \widetilde{V}^{n,X}_i(\lambda^X)-V^{n,X}_i(\lambda^X)\vert\leq 1/\sqrt{n}$ (and similarly for $Y$). Thus, using (throughout the paper) the following elementary inequality, valid for any $a,b,c,d\in\mathbb{R}$,
\begin{equation}\label{elem}
\left\vert a\wedge b - (a+c)\wedge (b+d)\right\vert \leq \max(\vert c\vert , \vert d\vert),
\end{equation}we get 
\begin{equation}\label{closetoc}
\left| \frac{LCI_n}{n}-\frac{LCI^c_n}{n}\right| \leq \frac{m}{n}.
\end{equation}

Moreover, if $A^{\eta}_n$ occurs, then for all $\lambda\in\Lambda^2$, 
\begin{equation*}
\left| \sum_{i=1}^m \left[\left(p^X_i\lambda^X_i+\frac{V^{n,X}_i(\lambda^X)}{\sqrt{n}}\right)\wedge \left(p^Y_i\lambda^Y_i+\frac{V^{n,Y}_i(\lambda^Y)}{\sqrt{n}}\right)\right] - \sum_{i=1}^m \left[\left(p^X_i\lambda^X_i\right)\wedge \left(p^Y_i\lambda^Y_i\right)\right]\right| \leq m n^{\eta-1/2},
\end{equation*} 
so, letting $f:\left(\mathbb{R}^{m}\right)^2\rightarrow \mathbb{R}$ be given via 
\begin{equation}\label{defef}
f:(y^X,y^Y)\mapsto \sum_{i=1}^m \left[\left(p^X_i y^X_i\right)\wedge \left(p^Y_i y^Y_i\right)\right],
\end{equation}
we have:  
\begin{equation*}
\left| \frac{LCI_n}{n} - \max_{\substack{\lambda\in\Lambda^2}}f(\lambda)\right| \leq {m}{n^{\eta-1/2}}.
\end{equation*}

By the Borel-Cantelli lemma (recalling \eqref{azuma}), almost surely, eventually 
$A^{\eta}_n$ occurs so $LCI^c_n/n$ and $LCI_n/n$ both converge almost 
surely to 
\begin{equation}\label{defemax}
e_{\max}:=\max_{\substack{\lambda\in\Lambda^2}}f(\lambda).
\end{equation}
From
\begin{equation*}
\frac{LCI_n}{n}\xrightarrow[n \to \infty]{}e_{max}, \text{a.s.},
\end{equation*}
we also get by dominated convergence
\begin{equation*}
\frac{\mathbb{E}LCI_n}{n}\xrightarrow[n \to \infty]{}e_{max}.
\end{equation*}

One can think of $e_{\max}$ as the length ratio of the longest common and 
increasing subsequences in a continuous, non-probabilistic setup: 
the letters have density masses $p^X_1, p^X_2, \dots, p^X_m$ and 
$p^Y_1, p^Y_2, \dots, p^Y_m$.

Now, let 
$$U=\left\{u\in(\mathbb{R}_+)^m\::\: \frac{u_1}{p^X_1}
+\dots+\frac{u_m}{p^X_m}\leq 1, \frac{u_1}{p^Y_1}+\dots+\frac{u_m}{p^Y_m}\leq 1\right\},$$ 
and let $\phi:\mathbb{R}^{m}\rightarrow \mathbb{R}$  be given by
$\phi:u\mapsto u_1+\dots+u_m$.

On $U$, there is a correspondence between $f$ in \eqref{defef},
and the above $\phi$.  Indeed, for $\lambda\in\Lambda^2$, defining $u$ by 
$u_i=\left(p^X_i \lambda^X_i\right)\wedge \left(p^Y_i \lambda^Y_i\right)$, $f(\lambda)=\phi(u)$, 
and for $u\in U$, there exists $\lambda\in \Lambda^2$,  
such that $\lambda^X_i\ge {u_i/p^X_i}$ and $\lambda^Y_i\ge {u_i/p^Y_i}$ 
so that $f(\lambda)\geq \phi(u)$. 
Therefore, $e_{\max}=\max_{\substack{u\in U}}\phi(u)$. 
Also, let 
\begin{equation}\label{KandL}
K_{\Lambda^2}=f^{-1}\left(\{e_{\max}\}\right)\cap \Lambda^2, \  
{\rm and}\  L_U=\phi^{-1}\left(\{e_{\max}\}\right)\cap U.
\end{equation}
The above correspondence provides for each element of $K_{\Lambda^2}$ an  
element of $L_U$, and for each element of $L_U$ at least one element of 
$K_{\Lambda^2}$ (if one of the two inequalities defining $U$ is strict, then there is 
more than one way to define the corresponding $\lambda$). 
Next, let $I$ be the set of integers $i\in\{1,\dots,m\}$ such that there exists $u^i\in L_U$ 
with $u^i_i>0$. One can think of $I$ as the letters that can be used to 
maximize $\phi$, or, equivalently, to maximize $f$. Let \begin{equation}\label{defui}
u^I=\frac{1}{\vert I \vert}\sum_{i\in I} u^i,
\end{equation}so $u^I\in L_U$ and for all $i\in I$, $u^I_i>0$. 
Thanks to the above correspondence, we define (and will use throughout 
the paper) $a\in \Lambda^2$ such that $a^X_i=a^Y_i=0$ for all $i\notin I$ and $a^X_i\geq u^I_i/p^X_i$, $a^Y_i\geq u^I_i/p^Y_i$, 
for all $i\in I$ ($a$ is a correspondent of $u_I$). Since $f(a)\geq \phi(u^I)=e_{\max}$, $a\in K_{\Lambda^2}$. We shall see, and use, that when restricting the alphabet to $I$, asymptotically (when properly centered and normalized) the distribution of $LCI_n$ remains unchanged.

Two distinct cases need to be analyzed in order to study the limiting distribution of $LCI_n$.

\paragraph{Case a)} There exists $u\in L_U$ such that $\frac{u_1}{p^X_1}
+\dots+\frac{u_m}{p^X_m} =1$ and 
$\frac{u_1}{p^Y_1}+\dots+\frac{u_m}{p^Y_m}<1$.

For example, when $p^X=(3/8,3/8,1/4)$ and $p^Y=(1/2,3/8,1/8)$. 
Here the maximum is $3/8$, and $I=\{1,2\}$.

Heuristically, this case indicates that the length of the common words is limited by the word 
$X_1\cdots X_n$ and not by $Y_1\cdots Y_n$. 
Using the correspondence between $L_U$ and $K_{\Lambda^2}$, this case is equivalent to 
the following statement: there exists $\lambda\in K_{\Lambda^2}$ such that for all $i\in\ens{m}, p^X_i\lambda^X_i\leq p^Y_i\lambda^Y_i$ with at least one strict inequality. In this case, one has:  
\begin{lem}\label{icasa}
Let $p^X_{\max}=\max_{i\in\ens{m}} p^X_i$. Then $I=\{i\in\ens{m}\::\: p^X_i=p^X_{\max}\}$ and $e_{\max}=p^X_{\max}$. Moreover there exists $i_1\in I$ such that $p^Y_{i_1}>p^X_{\max}$.
\end{lem}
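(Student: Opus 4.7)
The plan is to exploit, via simple exchange (linear-perturbation) arguments, the strict slack in the $Y$-constraint provided by Case a). Fix once and for all a $u\in L_U$ as in Case a), so that $\sum_{i=1}^m u_i/p^X_i=1$ and $\sum_{i=1}^m u_i/p^Y_i<1$.

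First, I would establish $e_{\max}=p^X_{\max}$ together with the fact that this $u$ is supported on $\{i\::\: p^X_i=p^X_{\max}\}$. The bound
\begin{equation*}
e_{\max}=\phi(u)=\sum_{i=1}^m p^X_i\cdot\frac{u_i}{p^X_i}\le p^X_{\max}\sum_{i=1}^m\frac{u_i}{p^X_i}\le p^X_{\max}
\end{equation*}
is immediate from the definition of $U$. Suppose some $i^*$ with $p^X_{i^*}<p^X_{\max}$ has $u_{i^*}>0$ and pick any $k^*$ with $p^X_{k^*}=p^X_{\max}$. Define $\tilde u$ by replacing $u_{i^*}$ with $u_{i^*}-\delta p^X_{i^*}$ and $u_{k^*}$ with $u_{k^*}+\delta p^X_{k^*}$, leaving the other coordinates unchanged. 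The $X$-constraint is preserved exactly, while the perturbation of the $Y$-constraint is linear in $\delta$, so the strict slack of Case a) keeps $\tilde u\in U$ for all sufficiently small $\delta>0$. But $\phi(\tilde u)-\phi(u)=\delta(p^X_{\max}-p^X_{i^*})>0$, contradicting $u\in L_U$. Hence $u$ is supported on $\{i\::\: p^X_i=p^X_{\max}\}$, and therefore $e_{\max}=p^X_{\max}\sum_i u_i/p^X_i=p^X_{\max}$.

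Second, I would prove $I=\{i\::\: p^X_i=p^X_{\max}\}$. The inclusion $\subseteq$ is immediate: applied to an arbitrary $u'\in L_U$, the chain above has $\phi(u')=p^X_{\max}$ on the left, so all inequalities collapse to equalities and in particular $p^X_j=p^X_{\max}$ whenever $u'_j>0$. For $\supseteq$, given any $i$ with $p^X_i=p^X_{\max}$, pick $j$ in the support of $u$ and perform the mass transfer $u_j\mapsto u_j-\delta p^X_{\max}$, $u_i\mapsto u_i+\delta p^X_{\max}$; this leaves both $\phi$ and the $X$-constraint unchanged and, again by strict $Y$-slack, keeps the new vector in $U$ for small $\delta>0$, producing an element of $L_U$ with positive $i$-th coordinate, so $i\in I$.

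Finally, the existence of $i_1\in I$ with $p^Y_{i_1}>p^X_{\max}$ is read directly off the two constraints for the same $u$: since $u$ is supported on $I$ and $p^X_i=p^X_{\max}$ for $i\in I$,
\begin{equation*}
\sum_{i\in I}\frac{u_i}{p^X_{\max}}=1>\sum_{i\in I}\frac{u_i}{p^Y_i},\qquad\text{i.e.,}\qquad\sum_{i\in I}u_i\Bigl(\frac{1}{p^X_{\max}}-\frac{1}{p^Y_i}\Bigr)>0,
\end{equation*}
so at least one $i_1\in I$ with $u_{i_1}>0$ must satisfy $p^Y_{i_1}>p^X_{\max}$. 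The argument is essentially linear-programming flavored and presents no genuine analytic obstacle; the main thing to watch is that each exchange is tailored so that the perturbed constraint is precisely the one with strict slack in Case a), which is exactly what makes the three parts work with the same vector $u$.
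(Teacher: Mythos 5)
Your proof is correct and uses essentially the same mechanism as the paper: mass-transfer perturbations that preserve the $X$-constraint exactly and exploit the strict slack in the $Y$-constraint, followed by the same averaging of the two constraints to produce $i_1$. The only cosmetic difference is that you obtain the inclusion $I\subseteq\{i: p^X_i=p^X_{\max}\}$ from the equality case of the chain $\phi(u')\le p^X_{\max}\sum_i u'_i/p^X_i\le p^X_{\max}$ applied to an arbitrary $u'\in L_U$, whereas the paper perturbs the averaged vector $(u^i+u)/2$ directly; both are sound.
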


\begin{proof}
Let $i,j\in\ens{m}$ be such that $p^X_i<p^X_j$, and assume, by contradiction, that $i\in I$. 
Let $u\in L_U$ satisfying $\frac{u_1}{p^X_1}+\dots+\frac{u_m}{p^X_m} =1$ and $\frac{u_1}{p^Y_1}+\dots+\frac{u_m}{p^Y_m}<1$, and let $v=(u^i+u)/2$, so that $v\in U$, $v_i>0$, 
$\frac{v_1}{p^X_1}+\dots+\frac{v_m}{p^X_m} \leq 1$ and $\frac{v_1}{p^Y_1}+\dots+\frac{v_m}{p^Y_m}<1$. 
Let, for $\varepsilon>0$, $v({\varepsilon})$ be the vector $v$ except at the 
coordinates $i$ and $j$ where  $v({\varepsilon})_i:=v_i-\varepsilon p^X_i$ 
and $v({\varepsilon})_j:=v_j+\varepsilon p^X_j$.  
It is clear that, when $\varepsilon$ is small enough, 
$v(\varepsilon)\in U$ and $\phi\left(v(\varepsilon)\right)
=e_{\max}+\varepsilon(p^X_j-p^X_i)>e_{\max}$, 
leading to a contradiction.  
Hence $I\subset\{i\in\ens{m}\::\: p^X_i=p^X_{\max}\}$. 
Reciprocally, let $i\in\ens{m}$ be such that $p^X_i=p^X_{\max}$ and let $j\in I$. 
If $i=j$ we are done. Otherwise, one can slightly change $u$ by adding $\varepsilon$ to the $ith$ coordinate and subtracting $\varepsilon$ to the $jth$ coordinate so that $\phi(u)$ remains unchanged, and $u$ is still in $U$ (for $\varepsilon$ small enough), so $I=\{i\in\ens{m}\::\: p^X_i=p^X_{\max}\}$.

Since $\frac{u_1}{p^X_1}+\dots+\frac{u_m}{p^X_m}=\sum_{i\in I}\frac{u_i}{p^X_{\max}}>\sum_{i\in I} \frac{u_i}{p^Y_i}$, there exists $i_1\in I$ such that $p^Y_{i_1}>p^X_{\max}$. 
It is finally clear that $e_{\max}=p^X_{\max}$, completing the proof.
\end{proof}

As a consequence of the above lemma, we prove next that 
\begin{equation}\label{defJ}
J:=\left\{\lambda^X\in\Lambda\::\: \forall i\notin I, \ \lambda^X_i=0, \ \sum_{i\in I} \frac{\lambda^X_i}{p^Y_i}\leq \frac{1}{p^X_{\max}}\right\} = \left\{\lambda^X\::\:\lambda\in K_{\Lambda^2}\right\},
\end{equation} 
(in particular, this set is non-empty which is all that is really needed in the rest of the proof).  
To show this equality, 
first note that $\left\{\lambda^X\::\:\lambda\in K_{\Lambda^2}\right\}\subset J$ since, indeed, 
when $\lambda\in  K_{\Lambda^2}$, for every $i\in I$, $p^X_{\max}\lambda^X_i\leq p^Y_i \lambda^Y_i$ and then 
take the sum. 
Conversely, if $\lambda^X\in J$, $\sum_{i\in I} {p^X_{\max}\lambda^X_i}/{p^Y_i}\leq 1$, 
so let $\lambda^Y$ be such that 
for every $i\in I$, $\lambda^Y_i\geq {p^X_{\max}\lambda^X_i}/{p^Y_i}$ and $\sum_{i\in I} \lambda^Y_i=1$, while for $i\in I^c$, 
let $\lambda^Y_i=0$.  Clearly, $\lambda\in K_{\Lambda^2}$.

\paragraph{Case b)} For all $u\in L_U$, $\frac{u_1}{p^X_1}+\dots+\frac{u_m}{p^X_m} = \frac{u_1}{p^Y_1}+\dots+\frac{u_m}{p^Y_m}=1$.

Heuristically, this second case indicates that in order to form the longest common words, it is 
necessary to make full use of both words. Using the correspondence between $L_U$ and $K_{\Lambda^2}$, this case is equivalent to the following: for all $\lambda\in K_{\Lambda^2}$, 
for all $i\in\ens{m}, p^X_i\lambda^X_i= p^Y_i\lambda^Y_i$. We can further distinguish two 
subcases, namely, we are in Case b1)  if each coordinate of $P^X:=\left(1/p^X_i\right)_{i\in I}\in \mathbb{R}^{I}$ is equal to each coordinate of 
$P^Y=\left(1/p^Y_i\right)_{i\in I}\in \mathbb{R}^{I}$, and in Case b2) otherwise. 

For example, if $p^X=(1/3,1/3,2/9,1/9)$ and $p^Y=(1/3,1/3,1/9,2/9)$, we are in Case b1) and $e_{\max}=1/3$. If $p^X=(2/3,1/6,1/6)$ and $p^Y=(1/6,2/3,1/6)$, we are in 
Case b2) and $e_{\max}=4/15$. In both of these examples, $I=\{1,2\}$.

Below $\text{Span}(P^X)$ (resp. $\text{Span}(P^Y)$) is the linear span of $P^X$ (resp. $P^Y$).

\begin{lem}\label{indep} In Case b2), there exists a unique pair of reals $s,t$ such that  
$sP^X+tP^Y=(1)_{i\in I}$\end{lem}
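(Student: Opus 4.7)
The plan is to recognize Lemma \ref{indep} as two linear-algebraic statements about the vectors $P^X, P^Y \in \mathbb{R}^I$: the existence of $(s,t)$ is equivalent to $(1)_{i\in I}$ lying in $\mathrm{Span}(P^X, P^Y)$, while uniqueness amounts to $P^X$ and $P^Y$ being linearly independent in $\mathbb{R}^I$.

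For existence, I would exploit that $u^I$, defined in \eqref{defui}, satisfies $u^I_i > 0$ for $i \in I$ and $u^I_i = 0$ for $i \notin I$, while, since we are in Case b), $\sum_{i \in I} u^I_i/p^X_i = \sum_{i \in I} u^I_i/p^Y_i = 1$. Suppose for contradiction that $(1)_{i\in I} \notin \mathrm{Span}(P^X, P^Y)$. Decomposing $(1)_{i \in I}$ along $\mathrm{Span}(P^X, P^Y)$ and its orthogonal complement in $\mathbb{R}^I$ produces a nonzero vector $v$ in that complement satisfying $\langle v, P^X\rangle = \langle v, P^Y\rangle = 0$ and $\sum_{i\in I} v_i \ne 0$; after a sign change if needed, assume $\sum_{i \in I} v_i > 0$. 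Extending $v$ by zero outside $I$, the perturbation $u^I + \varepsilon v$ satisfies all the defining inequalities of $U$ for $\varepsilon > 0$ sufficiently small: non-negativity holds because $u^I_i > 0$ on $I$, and both sum constraints remain exactly equal to $1$. Yet $\phi(u^I + \varepsilon v) = e_{\max} + \varepsilon \sum_{i \in I} v_i > e_{\max}$, contradicting the definition of $e_{\max}$.

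For uniqueness, I would show that in Case b2), $P^X$ and $P^Y$ must be linearly independent. If, on the contrary, $P^X = c P^Y$ for some scalar $c$ (the strict positivity of the entries of $P^Y$ forces $c > 0$), then evaluating at $u^I$ yields $1 = \sum_{i \in I} u^I_i/p^X_i = c \sum_{i \in I} u^I_i/p^Y_i = c$, hence $c = 1$ and $P^X = P^Y$, contradicting the defining property of Case b2). Linear independence of $P^X$ and $P^Y$ then gives the sought-after uniqueness of $(s,t)$.

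The only step that requires care is verifying that $u^I + \varepsilon v$ is admissible in the maximization of $\phi$ over $U$; this is immediate once one notes that $u^I|_I$ is strictly positive and that $v$ has been chosen orthogonal to both $P^X$ and $P^Y$, but it is the sole place where the specific construction of $u^I$ in \eqref{defui} is invoked.
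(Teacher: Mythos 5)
Your proof is correct. For existence, it is the paper's argument in different clothing: the paper proves $(1)_{i\in I}\in\mathrm{Span}(P^X,P^Y)$ via the duality fact $\mathrm{Ker}(l_1)\cap\mathrm{Ker}(l_2)\subset\mathrm{Ker}(l_3)\iff l_3\in\mathrm{Span}(l_1,l_2)$, checking the kernel inclusion by perturbing $u^I$ by $\pm\varepsilon x$ for $x$ annihilated by both $P^X$ and $P^Y$; you instead take the orthogonal component $v$ of $(1)_{i\in I}$ relative to $\mathrm{Span}(P^X,P^Y)$ and perturb by $+\varepsilon v$ alone, which suffices since $\sum_{i\in I}v_i=\langle v,(1)_{i\in I}\rangle=\|v\|^2>0$ (so your ``sign change if needed'' is automatic). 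The crucial step --- admissibility of the perturbed point, resting on $u^I_i>0$ for $i\in I$ and the Case b) equalities $\langle u^I,P^X\rangle=\langle u^I,P^Y\rangle=1$ --- is identical in both proofs. Where you genuinely diverge is the linear-independence (uniqueness) part: the paper argues that dependence with $P^X\neq P^Y$ and positive entries forces coordinatewise domination of one vector by the other, which would place us in Case a) rather than Case b); you instead test a putative relation $P^X=cP^Y$ against $u^I$ and use the two normalizations to force $c=1$, contradicting the defining property of Case b2) directly. Your uniqueness argument is shorter and stays entirely inside Case b), avoiding any appeal to the characterization of Case a); both routes are valid.
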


\begin{proof}
The only alternatives to Case b1) are: $P^X$ and $P^Y$ are linearly independent, or $P^X$ and $P^Y$ are linearly dependent and $P^X\neq P^Y$. If the latter, given that $P^X$ and $P^Y$ have positive coordinates, $P^X<P^Y$ (coordinate by coordinate) or $P^Y<P^X$. But $P^X<P^Y$ clearly implies that Case a) occurs, and not Case b) leading to a contradiction (and similarly $P^Y<P^X$). 
Therefore, the only alternative to Case b1) is for $P^X$ and $P^Y$ to be linearly independent. We now prove that $H:=(1)_{i\in I}\in\text{Span}(P^X,P^Y)$. To do so, we use an 
elementary duality result: if $E$ is a finite-dimensional space with dual 
$E^*$, and if $l_1,l_2,l_3\in E^*$, then $\text{Ker}(l_1)\cap\text{Ker}(l_2)\subset\text{Ker}(l_3)$ if and only if  $l_3\in\text{Span}(l_1,l_2)$. Indeed, considering the restrictions $l_{2|\text{Ker}(l_1)}$ and 
$l_{3|\text{Ker}(l_1)}$ of $l_2$ and $l_3$ to the subspace $\text{Ker}(l_1)$, we have $\text{Ker}(l_{2|\text{Ker}(l_1)})\subset\text{Ker}(l_{3|\text{Ker}(l_1)})$.  Therefore,  $l_{3|\text{Ker}(l_1)}=\lambda l_{2|\text{Ker}(l_1)}$ for 
some $\lambda\in \mathbb{R}$, and if $u\notin \text{Ker}(l_1)$, then $l_3=\lambda l_2 +\frac{l_3(u)-\lambda l_2(u)}{l_1(u)}l_1$ 
(because this is true on $\text{Ker}(l_1)$ and on $u$). 
So, returning to our problem, $H\in\text{Span}(P^X,P^Y)$ is equivalent to: $\text{Ker}(P^{X^*})\cap\text{Ker}(P^{Y^*})\subset\text{Ker}(H^*)$, where for any $L\in\mathbb{R}^I$, $L^*$ denotes the linear form defined by $L^*(y)=L\cdot y$. Let $x\in\text{Ker}({(P^X)}^*)\cap\text{Ker}({(P^Y)}^*)$. Clearly, there exists $\varepsilon>0$ such that $u^I+\varepsilon x$ and $u^I-\varepsilon x$ have non-negative coordinates, 
and so they are in $L_U$, and 
$H^*(u^I+\varepsilon x)=H^*(u^I-\varepsilon x)=e_{\max}$ otherwise one of them 
would be greater than $e_{\max}$, hence $x\in \text{Ker}(H^*)$.
\end{proof}

For instance, taking again 
$p^X=(2/3,1/6,1/6)$ and $p^Y=(1/6,2/3,1/6)$, 
we get $P^X=(3/2,6), P^Y=(6,3/2)$ and $s=t=2/15$.

Without loss of generality (switching the roles of $X$ and $Y$), one can thus assume that either 
Case a) or Case b) occurs.

In Case b), the following technical lemma, whose proof (given in the Appendix) is not crucial to 
understand the rest of this manuscript, is needed to state our main theorem.  
Let us define first, in Case b1),
\begin{equation}
s_X:=\begin{cases} 
  \max_{{i\in I^c:p^X_i\geq e_{\max}}} \frac{p^Y_i(p^X_i-e_{\max})}{e_{\max}(p^X_i-p^Y_i)} & 
  \text{if }\{i\in I^c, p^X_i\geq e_{\max}\}\neq \emptyset, \\
      0, & \text{if }\{i\in I^c, p^X_i\geq e_{\max}\}= \emptyset, \\
      \end{cases}
\quad t_X:=1-s_X,
\end{equation}
and, similarly, 
 \begin{equation}
s_Y:=\begin{cases} 
  \max_{{i\in I^c, p^Y_i\geq e_{\max}}} \frac{p^X_i(p^Y_i-e_{\max})}{e_{\max}(p^Y_i-p^X_i)},  
  & \text{if } \{i\in I^c:p^Y_i\geq e_{\max}\}\neq \emptyset, \\
      0, & \text{if }\{i\in I^c, p^Y_i\geq e_{\max}\}= \emptyset, \\
      \end{cases}
\quad t_Y:=1-s_Y.
\end{equation}

It is clear, from the definition of $I$, that if $i\in I$ is such that $p^X_i\geq e_{\max}$, 
then $p^Y_i<e_{\max}$, therefore $s_X$ and $s_Y$ are well defined 
and one can check that $s_X, t_X, s_Y, t_Y\in [0,1]$.   

In order to state our next lemma, below let 
$E=\{x\in\mathbb{R}^{m}\::\: x_1+\dots+x_m=0\}$ and let 
$E'=\left\{x\in E : \forall i\in I^c, x_i\geq 0 \right\}$.

\begin{lem}\label{exprm} Let $\nu\in\left(\mathbb{R}^m\right)^2$ be such 
that for all $i\in I^c, \nu^X_i=\nu^Y_i=0$, then the following maximum is well defined:
\begin{equation}\label{defm}
\mathfrak{m}(\nu):=\max_{x\in E'^2} \sum_{i=1}^m \left[\left(p^X_i x^X_i
+\nu^X_i\right)\wedge \left(p^Y_i x^Y_i+\nu^Y_i\right)\right],  
\end{equation}  
and 
\begin{equation}\label{local}
\mathfrak{m}(\nu)=\max_{\substack{x\in E'^2\\ \|x\|_{\infty}\leq 
2Cm \|\nu\|_{\infty}}} \sum_{i=1}^m \left[\left(p^X_i x^X_i
+\nu^X_i\right)\wedge \left(p^Y_i x^Y_i+\nu^Y_i\right)\right], 
\end{equation}
for some constant $C>0$, depending only on 
$p^X$ and $p^Y$, as given in Lemma~\ref{proj}.  
In Case b1), writing $S^{\bullet}:=\sum_{i\in I}\nu^{\bullet}_i$, then 
\begin{equation}
\mathfrak{m}(\nu)= \begin{cases} 
      s_X S^Y+t_X S^X, & if S^X\leq S^Y, \\
      s_Y S^X+t_Y S^Y, & if S^X\geq S^Y. \\
   \end{cases}.
\end{equation}
In Case b2), and recalling the notations of Lemma~\ref{indep}, then 
\begin{equation}
\mathfrak{m}(\nu)=\sum_{i\in I} \left(\frac{s}{p^X_i}\nu^X_i+ \frac{t}{p^Y_i}\nu^Y_i\right).
\end{equation}
\end{lem}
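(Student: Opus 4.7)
My plan is to compute $\mathfrak{m}(\nu)$ via linear programming duality. The function $g_\nu(x) := \sum_{i=1}^m \left[(p^X_i x^X_i + \nu^X_i)\wedge(p^Y_i x^Y_i + \nu^Y_i)\right]$ is piecewise linear and concave on the closed convex polyhedron $E'^2$, so the supremum, once shown to be finite, is attained. The upper bound comes from applying the elementary inequality $a\wedge b \leq \alpha a + \beta b$ ($\alpha,\beta\geq 0$, $\alpha+\beta=1$) termwise with weights chosen so that the $x$-dependence in the bound vanishes upon summation.

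Concretely, fix dual parameters $\mu^X,\mu^Y\geq 0$ and set $\alpha_i = \mu^X/p^X_i$, $\beta_i=\mu^Y/p^Y_i$ for $i\in I$, which forces $\mu^X/p^X_i + \mu^Y/p^Y_i = 1$ on $I$. For $i\in I^c$, pick $\alpha_i,\beta_i\geq 0$ with $\alpha_i+\beta_i=1$, $\alpha_i p^X_i\leq \mu^X$, $\beta_i p^Y_i\leq \mu^Y$; this is possible precisely when $\mu^X/p^X_i + \mu^Y/p^Y_i \geq 1$. The coefficient of $x^X_i$ in the resulting bound is $\alpha_i p^X_i$, equal to $\mu^X$ on $I$ and at most $\mu^X$ on $I^c$; since $x^X\in E'$ has $\sum_i x^X_i=0$ and $x^X_i\geq 0$ on $I^c$, this sums to at most $0$, and likewise for $x^Y$. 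Using $\nu_i=0$ on $I^c$ yields the uniform upper bound
\begin{equation*}
\mathfrak{m}(\nu) \leq \mu^X \sum_{i\in I}\frac{\nu^X_i}{p^X_i} + \mu^Y \sum_{i\in I}\frac{\nu^Y_i}{p^Y_i}
\end{equation*}
for every admissible $(\mu^X,\mu^Y)$.

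In Case b2), Lemma~\ref{indep} pins down $(\mu^X,\mu^Y)=(s,t)$ uniquely. The sign conditions $s,t\geq 0$ and the $I^c$ inequalities come for free by identifying $(s,t)$ with the optimal dual variables of the LP $\max_{u\in U}\phi(u)$: complementary slackness at the maximizer $u^I$ (both primal constraints tight since we are in Case b), and $u^I_i>0$ on $I$) forces the optimal duals to solve $y_1/p^X_i + y_2/p^Y_i = 1$ on $I$, which by uniqueness must be $(s,t)$, while LP duality for a $\leq$-constrained maximization automatically supplies $y_1,y_2\geq 0$ and the reduced-cost inequalities $y_1/p^X_i + y_2/p^Y_i \geq 1$ on $I^c$. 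Substituting gives the stated formula. In Case b1), one first observes that $p^X_i=p^Y_i=e_{\max}$ on $I$ (else transferring mass from the smaller-$q$ coordinate to the larger improves $\phi$, contradicting maximality), so the $I$-equations reduce to $\mu^X+\mu^Y=e_{\max}$. Writing $\mu^X=\sigma e_{\max}$, a sub-case analysis of the $I^c$ inequality, using that $p^X_i$ and $p^Y_i$ cannot both be $\geq e_{\max}$ when $i\in I^c$, translates it into $\sigma\leq 1 - s_X^{(i)}$ when $p^X_i\geq e_{\max}$ and $\sigma\geq s_Y^{(i)}$ when $p^Y_i\geq e_{\max}$; the worst-case over $i$ gives $\sigma\in[s_Y,t_X]$, and minimizing the linear $\sigma S^X + (1-\sigma)S^Y$ over this interval recovers the two stated formulas depending on the sign of $S^X-S^Y$.

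To achieve the bound and thus establish equality, I would construct explicit $x\in E'^2$ satisfying complementary slackness: enforce $p^X_i x^X_i + \nu^X_i = p^Y_i x^Y_i + \nu^Y_i$ on $I$, place suitable positive mass at the $i^*\in I^c$ realizing the max in $s_X$ (or $s_Y$) to link the $X$- and $Y$-sums, and distribute the remaining mass within $I$ consistently with $\sum x_i = 0$; a direct computation then matches the dual value. The local representation \eqref{local} follows from Lemma~\ref{proj}, which lets any maximizer be projected onto $\{\|x\|_\infty \leq 2Cm\|\nu\|_\infty\}$ along kernel directions preserving $g_\nu$, with $C$ depending only on $(p^X,p^Y)$. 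The main obstacle will be the detailed sub-case bookkeeping in Case b1) to check that the $I^c$ inequalities translate exactly to $\sigma\in[s_Y,t_X]$ and the explicit primal construction; Case b2) and the localization are essentially routine once the dual identification and Lemma~\ref{proj} are in hand.
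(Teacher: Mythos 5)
Your route is genuinely different from the paper's. The paper proves the Case b1) formula by first reducing to the subcase $I=\{1\}$ (showing a maximizer uses at most one letter of $I^c$, then optimizing a one-dimensional parameter $t^X$ explicitly, then aggregating the coordinates of $I$), and proves Case b2) by constructing an explicit point $y\in E'^2$ at which the two sides of every minimum are equal (via an invertible $2k\times 2k$ matrix built from $L_1,\dots,L_k,\widetilde{P^X},\widetilde{P^Y}$) and then showing any competitor loses by $f(x-y)\le 0$. You instead run weak LP duality: the convex-combination certificate $a\wedge b\le \alpha a+\beta b$ with $\alpha_i=\mu^X/p^X_i$, $\beta_i=\mu^Y/p^Y_i$ on $I$ gives a clean, unified upper bound $\mu^X\sum_{i\in I}\nu^X_i/p^X_i+\mu^Y\sum_{i\in I}\nu^Y_i/p^Y_i$ in both subcases, and your identification of the admissible $(\mu^X,\mu^Y)$ is correct: in b2) complementary slackness at $u^I$ plus the uniqueness in Lemma~\ref{indep} pins down $(s,t)$ and delivers $s,t\ge 0$ and the $I^c$ inequalities for free, while in b1) the reduced-cost conditions do translate exactly to $\sigma\in[s_Y,t_X]$ (I checked that $1-s_X^{(i)}=p^X_i(e_{\max}-p^Y_i)/\bigl(e_{\max}(p^X_i-p^Y_i)\bigr)$, matching your inequality), and minimizing $\sigma S^X+(1-\sigma)S^Y$ over that interval reproduces the two stated formulas. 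This is arguably more transparent than the paper's case analysis and explains where $s_X,t_X,s_Y,t_Y$ come from.

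The gap is that you have only proved the upper bound. The attainment step — exhibiting $x\in E'^2$ achieving the dual value — is exactly where the paper's proof concentrates its effort, and your one-sentence plan glosses over the points that require work: (i) in Case b2), the existence of a point with $p^X_iy^X_i+\nu^X_i=p^Y_iy^Y_i+\nu^Y_i$ on $I$ \emph{and} $\sum_iy^X_i=\sum_iy^Y_i=0$ is a nontrivial solvability claim (the paper needs the linear independence of $P^X,P^Y$ and the $Q$-matrix construction to get it); (ii) complementary slackness only forces equality in the $i$-th minimum when $\alpha_i\in(0,1)$, so the degenerate cases $s=0$, $t=0$, $s_X=0$ or $s_Y=0$ (where the optimum sits at an endpoint $\sigma\in\{0,1\}$ or uses no letter of $I^c$) need separate, if easy, treatment; (iii) the localization \eqref{local} via Lemma~\ref{proj} also needs the preliminary step of reducing to $x\in P$ by positive rescaling, since Lemma~\ref{proj} is stated only on $P$. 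None of these looks fatal, but as written the proposal establishes $\mathfrak{m}(\nu)\le(\text{claimed value})$ rigorously and leaves the reverse inequality, and hence the lemma, unfinished.
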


\subsection{Representation of $e_{\max}$}

We now aim to give a more explicit expression for $e_{max}$ defined by \eqref{defemax}.  
To do so, let us start with the following lemma which asserts that, in the non-probabilistic setup, 
"two letters are enough to reach the maximum".

\begin{lem}\label{reductotwo}There exist $i,j\in\ens{m}$ and $\lambda\in K_{\Lambda^2}$ 
such that for all $k\notin\{i,j\}, \lambda^X_k=\lambda^Y_k=0$.
\end{lem}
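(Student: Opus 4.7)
The plan is to reduce the claim to a standard linear programming fact via the representation $e_{\max}=\max_{u\in U}\phi(u)$ established earlier. Since $\phi$ is linear and $U$ is a compact convex polytope in $\mathbb{R}^m$, the maximum is attained at a vertex $u^{\star}$ of $U$. Now $U$ is cut out by exactly $m+2$ affine inequalities: the $m$ non-negativity constraints $u_k\geq 0$ together with the two constraints $\sum_k u_k/p^X_k\leq 1$ and $\sum_k u_k/p^Y_k\leq 1$. A vertex of a polytope in $\mathbb{R}^m$ is a point where at least $m$ linearly independent defining inequalities are active, hence at least $m-2$ of the $u_k\geq 0$ constraints are tight at $u^{\star}$. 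Equivalently, $u^{\star}$ has at most two strictly positive coordinates; call their indices $i$ and $j$ (taking $j$ arbitrary if only one coordinate is positive, and $i=j=1$ if none).

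To rule out the degenerate possibility $u^{\star}=0$, note that the vector with $u_1=\min(p^X_1,p^Y_1)>0$ and other coordinates zero lies in $U$ with $\phi>0$, so $e_{\max}>0$ and $u^{\star}\neq 0$. Hence the set of ``active'' letters in $u^{\star}$ is nonempty and of size at most two.

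Having produced $u^{\star}\in L_U$ supported on $\{i,j\}$, the final step is to push it back to $K_{\Lambda^2}$ through the correspondence spelled out just before \eqref{KandL}. Define $\lambda^X_k=\lambda^Y_k=0$ for $k\notin\{i,j\}$, and on $\{i,j\}$ choose $\lambda^X_i\geq u^{\star}_i/p^X_i$, $\lambda^X_j\geq u^{\star}_j/p^X_j$ with $\lambda^X_i+\lambda^X_j=1$; this is possible since $u^{\star}_i/p^X_i+u^{\star}_j/p^X_j\leq 1$ by membership of $u^{\star}$ in $U$. Proceed identically for $\lambda^Y$. Then $\lambda\in\Lambda^2$, and by the very definition of $f$ and $\phi$,
\begin{equation*}
f(\lambda)=\sum_{k=1}^m\bigl[(p^X_k\lambda^X_k)\wedge(p^Y_k\lambda^Y_k)\bigr]\geq \sum_{k\in\{i,j\}}u^{\star}_k=\phi(u^{\star})=e_{\max},
\end{equation*}
so $\lambda\in K_{\Lambda^2}$, as desired.

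The only real obstacle is the vertex argument, and it is essentially bookkeeping: one has to be careful that the active non-negativity constraints at a vertex really do force the corresponding coordinates to vanish (rather than overlap redundantly with the two linear constraints), and that $u^{\star}\neq 0$ so that at least one positive coordinate exists. Both points follow from $e_{\max}>0$ and the standard characterization of vertices of a polyhedron.
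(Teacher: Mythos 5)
Your proof is correct. It is also, at bottom, the same idea as the paper's, but packaged differently: the paper takes a maximizer $u\in L_U$ with at least three positive coordinates and explicitly reduces its support by one, by moving along a nonzero direction $x$ in the two-codimensional space $V=\{x:\sum_i x_i/p^X_i=\sum_i x_i/p^Y_i=0,\ x_4=\dots=x_m=0\}$ until a coordinate hits zero (iterating as needed), whereas you invoke the standard linear-programming fact that a linear functional on a compact polytope attains its maximum at a vertex, and then count active constraints to see that a vertex of $U$ has at most two positive coordinates. The paper's perturbation step is exactly the usual proof of that vertex theorem, so nothing new is being used; your version is a one-shot argument that avoids the iteration and the (slightly glossed-over) verification in the paper that the perturbed point $v=u+tx$ stays in $L_U$, at the cost of citing the basic-feasible-solution characterization as a black box. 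Your final step, transporting $u^{\star}$ back to a $\lambda\in K_{\Lambda^2}$ supported on $\{i,j\}$, is precisely the $L_U$--$K_{\Lambda^2}$ correspondence the paper sets up before \eqref{KandL} and relies on implicitly in its own proof, and your explicit verification that $f(\lambda)\geq\phi(u^{\star})=e_{\max}$ is sound.
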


\begin{proof}
Let $u\in L_U$ having (at least) three non-zero coordinates. Then, 
recalling the correspondence between $L_U$ and $K_{\Lambda^2}$, in order 
to prove the result it is enough to show that there exists a 
$v\in L_U$ having one less null coordinate.  
Without loss of generality, let $u_1,u_2,u_3>0$, and let\begin{equation*}
V=\left\{x\in\mathbb{R}^m\::\:\sum_{i=1}^m \frac{x_i}{p^X_i}
=\sum_{i=1}^m \frac{x_i}{p^Y_i}=0,x_4=\dots=x_n=0\right\}.
\end{equation*}
Since the dimension of $V$ is at least one, let $x\in V\setminus \{0\}$.  
Then clearly,  there exists $t\in\mathbb{R}$ such that $v:=u+tx$ has 
non-negative coordinates and one more null coordinate than $u$. 
Moreover, $v\in L_U$, which completes the proof.
\end{proof}

If there exists $u\in L_U$ such all its coordinates except one, call it $i$, are zeros, 
then $e_{\max}=p^X_i\wedge p^Y_i$. Otherwise, let $i,j$ be defined as in the 
statement of the lemma.  At first, assume that $p^X_i=p^X_j$ and that $p^Y_i\leq p^Y_j$, 
then $e_{\max}\leq (\lambda^X_i p^X_i\wedge \lambda^Y_i p^Y_j)+(\lambda^X_j p^X_i\wedge \lambda^Y_j p^Y_j)\leq (\lambda^X_i p^X_i+\lambda^X_j p^X_i)\wedge( \lambda^Y_i p^Y_j+ \lambda^Y_j p^Y_j)=p^X_i\wedge p^Y_i$, so $e_{\max}=p^X_i\wedge p^Y_i$ and we are actually in the first case, giving 
a contradiction.  Similarly, if $p^X_i\leq p^X_j$ and $p^Y_i\leq p^Y_j$, using $\lambda^X_i p^X_i\wedge \lambda^Y_i p^Y_i\leq \lambda^X_i p^X_j\wedge \lambda^Y_i p^Y_j$ we get a contradiction as well. 
Therefore, in the second case, necessarily, possibly permuting $i$ and $j$, $p^X_i<p^X_j$ and $p^Y_i>p^Y_j$.  
Additionaly, it is necessary to have that $p^X_i< p^Y_i$, otherwise $e_{\max}=p^Y_i$ and we 
are in the first case. Similarly, $p^Y_j<p^X_j$. Then, in this case, the maximum is when 
the quantities in each minima are equal, and so one shows that \begin{equation*}
e_{\max}=e(i,j):=\frac{p^X_i p^Y_i(p^X_j-p^Y_j)+p^X_j p^Y_j(p^Y_i-p^X_i)}{p^Y_i p^X_j-p^X_i p^Y_j}.
\end{equation*}

Therefore,

\begin{equation}\label{finalemax}
e_{\max}=\max\Big(\max_{1\leq i\leq m} \left(p^X_i\wedge p^Y_i\right), \max_{\substack{i,j\::\: p^X_i<\,p^X_j\\\quad\rotatebox{90}{$\scriptstyle >$}\quad\: \rotatebox{90}{$\scriptstyle <$}\\ \quad\: p^Y_i>\,p^Y_j}}e(i,j)\Big).
\end{equation}

\noindent
Note that \begin{equation}
\max_{1\leq i\leq m} \left(p^X_i\wedge p^Y_i\right)\leq e_{\max}\leq \left(\max_{1\leq i\leq m}  p^X_i\right)\wedge\left(\max_{1\leq i\leq m}  p^Y_i\right), 
\end{equation}
\noindent
where the left inequality is clear, while the right one is easily seen  
from the expression of $f$.   Note also that above, $e_{max}$ 
is equal to the lower bound when the second max in \eqref{finalemax} is over the 
empty set, and is equal to the upper bound when there exists $i$ such that $p^X_{\max}=p^X_i\leq p^Y_i$ or $p^Y_{\max}=p^Y_i\leq p^X_i$.

When $p^X=p^Y$ (same distribution for the two words), we see that $e_{\max}=\max_{i\in\ens{m}}p^X_i$ is minimal when $p^X$ is uniform (for a given alphabet). This is to be contrasted 
with the case of the length of the longest common subsequences, $LC_n$ (defined just as $LCI_n$, 
but without the increasing condition).  Indeed, little is known about $\gamma^*:=\lim_{n\to +\infty} {\mathbb{E}LC_n}/{n}$, for instance whether or not it is 
minimal (for a given alphabet) for the uniform distribution.
Since $LC_n$ is defined with one less constraint than $LCI_n$, clearly $e_{\max}\leq \gamma^*$ 
which is of potential interest since the exact value of $\gamma^*$ is unknown, even in the 
binary uniform case.  (This last inequality provides a lower bound on $\gamma^*$, no matter the distributions on the letters.  For uniform letters, $e_{\max} = 1/m$, although it is known that, then, asymptotically, $\gamma^*\sim 2/\sqrt{m}$, see \cite{KLM}.)

\subsection{A criterion to distinguish the three cases}

For a given distribution, it is not completely apparent which 
situation is in play as far as 
the respective cases a), b1) and b2) are concerned.  
Our next result makes this more transparent.  First, set 
$$e_1=\max_{1\leq i\leq m} \left(p^X_i\wedge p^Y_i\right), \qquad  e_2=\max_{\substack{i,j\::\: p^X_i<\,p^X_j\\\quad\rotatebox{90}{$\scriptstyle >$}\quad\: \rotatebox{90}{$\scriptstyle <$}\\ 
\quad\: p^Y_i>\,p^Y_j}}e(i,j),$$  so that, by \eqref{finalemax}, $e_{\max}=\max(e_1, e_2)$.

\begin{thm}
Let $e_1<e_2$, then Case b2) holds true.  Let $e_1\geq e_2$, then: 

(i) If for some $i\in\ens{m}$ such that $p^X_i\wedge p^Y_i=e_1$,  one has $p^X_i\neq p^Y_i$, then Case a) holds true or so does its symmetric version: there exists $u\in L_U$ such that $\frac{u_1}{p^Y_1}+\dots+\frac{u_m}{p^Y_m} =1$ and $\frac{u_1}{p^X_1}+\dots+\frac{u_m}{p^X_m}<1$.

(ii) Otherwise, i.e., if for all $i\in\ens{m}$ such that $p^X_i\wedge p^Y_i=e_1$, one has 
$p^X_i=p^Y_i$, then if $e_1>e_2$  Case b1) holds true, while if $e_1=e_2$, then so 
does Case b2).

\begin{proof}

First, for any $0<\delta<1$, let $e_{\max,\delta}$, $e_{1,\delta}$, $e_{2,\delta}$ and $e_{\delta}(i,j)$ be defined just as $e_{\max},e_1,e_2$ and $e(i,j)$ but replacing $p^Y_i$ with $\delta p^Y_i$, for all $i\in\ens{m}$. 
Next, from the very definition of Case a): There exists $u\in L_U$ such that $\frac{u_1}{p^X_1}+\dots+\frac{u_m}{p^X_m} =1$ and $\frac{u_1}{p^Y_1}+\dots+\frac{u_m}{p^Y_m}<1$. Letting $\delta_0:=\frac{u_1}{p^Y_1}+\dots+\frac{u_m}{p^Y_m}$, we have $\frac{u_1}{\delta_0 p^Y_1}+\dots+\frac{u_m}{\delta_0 p^Y_m}=1$ so $e_{\max,\delta_0}\geq e_{\max}$ and therefore (clearly, $e_{\max, \delta}$ is  non-decreasing in $\delta$) $e_{\max,\delta_0}= e_{\max}$. So when Case a) occurs there exists $0<\delta_0<1$, such that 
for all $\delta\in (\delta_0,1], e_{\max,\delta}=e_{\max}$, and one can easily check the converse. 
A similar result continues to hold for the symmetric version of Case a).   

We can now prove the statement of the theorem by distinguishing the following four occurrences.

(1) Let $e_1<e_2$. Let $0<\delta_0<1$ be close enough to $1$ such that for any 
$\delta\in (\delta_0,1]$, the set of 
pairs $i,j\in\ens{m}$ such that $\substack{p^X_i<\,p^X_j\\\rotatebox{90}{$\scriptstyle >$}\quad\: \rotatebox{90}{$\scriptstyle <$}\\\: p^Y_i>\,p^Y_j}$ is equal to the set of $i,j\in\ens{m}$ such that 
$\substack{p^X_i<\,p^X_j\\\rotatebox{90}{$\scriptstyle >$}\quad\: \rotatebox{90}{$\scriptstyle <$}\\\: \delta p^Y_i>\,\delta p^Y_j}$.   Since for every $i,j$ in this set, it is immediate to check that $e(i,j)>e_{\delta}(i,j)$, the maximums satisfy $e_2>e_{\delta,2}$. Since $e_1<e_2$, by continuity, for $\delta$ close enough to $1$, $\max(e_{\delta,1}, e_{\delta,2})=e_{\delta,2}$ so $e_{\delta,\max}<e_{max}$, hence we are in Case b). There are $i,j\in \ens{m}$ such that $e_{\max}=e_2=e(i,j)$, so 
$i,j$ are in $I$, but $p^X_i<p^X_j$ so we are 
in Case b2).

(2) Let $e_1\geq e_2$, and let there exist $i\in\ens{m}$ such that $p^X_i\wedge p^Y_i=e_1$ and $p^X_i\neq p^Y_i$, say, $p^X_i<p^Y_i$.  Then, the very definition of Case a) is verified with the vector $u\in\mathbb{R}^m$ 
having coordinates  equal to zero except for $u_i=p^X_i$.  If instead, $p^X_i>p^Y_i$ then the symmetric 
case holds true.  

(3) Let $e_1>e_2$ and let for all $i\in\ens{m}$ such that $p^X_i\wedge p^Y_i=e_1$, $p^X_i=p^Y_i$. By continuity, for $\delta$ close enough to $1$, $\max(e_{\delta,1}, e_{\delta,2})=e_{\delta,1}=\delta e_{\max}$ so we are in Case b). Additionally, one verifies that under our assumptions $I$ is restricted to the set 
of $i\in\ens{m}$ such that $p^X_i=p^Y_i=e_{\max}$.  Therefore, we are, in fact,  in Case b1).

(4) Let $e_1=e_2$ and let for all $i\in\ens{m}$ such that $p^X_i\wedge p^Y_i=e_1$, $p^X_i=p^Y_i$. From what is done above, we see that for $\delta$ close enough to $1$, $e_{\delta,\max}<e_{\max}$ hence we are in Case b). Once again, since there are $i,j\in \ens{m}$ such that $e_{\max}=e_2=e(i,j)$, we are in Case b2).
\end{proof}

\end{thm}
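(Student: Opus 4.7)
The plan is to reduce everything to a one-parameter perturbation. I define $e_{\max,\delta}$ exactly as $e_{\max}$ but with $p^Y$ replaced by $\delta p^Y$ for $\delta\in(0,1]$, and similarly $e_{1,\delta}$, $e_{2,\delta}$, $e_\delta(i,j)$. The key equivalence I establish first is that Case a) holds if and only if there exists $\delta_0<1$ with $e_{\max,\delta}=e_{\max}$ for every $\delta\in(\delta_0,1]$. The direct implication is by rescaling: a witness $u\in L_U$ with $\sum_k u_k/p^Y_k=\delta_0<1$ remains feasible in the rescaled problem for every $\delta\geq\delta_0$; the converse is immediate, since any optimizer at $\delta<1$ lies in the original $U$ and already satisfies $\sum_k u_k/p^Y_k\leq\delta<1$. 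A symmetric equivalence, perturbing $p^X$ instead of $p^Y$, handles the symmetric version of Case a). Noting that any $u\in L_U$ must saturate at least one primal constraint (else $(1+\varepsilon)u\in U$ would beat $e_{\max}$), the three alternatives Case a), its symmetric, and Case b) are exhaustive, so the question reduces to whether $e_{\max,\delta}$ drops strictly below $e_{\max}$ as $\delta\nearrow 1$.

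Using $e_{\max}=\max(e_1,e_2)$ from \eqref{finalemax}, I check that $e_{2,\delta}$ is strictly decreasing in $\delta$ on a left-neighborhood of $1$ (the set of crossing pairs is unchanged for $\delta$ near $1$, and each $e_\delta(i,j)$ is strictly increasing in $\delta$), whereas $e_{1,\delta}$ is equal to $e_1$ on such a neighborhood if and only if some maximizer $i$ of $e_1=p^X_i\wedge p^Y_i$ satisfies $p^X_i\leq p^Y_i$. Case (1) then yields $e_{\max,\delta}<e_{\max}$ for $\delta$ near $1$, ruling out Case a) and its symmetric; so Case b) holds, and the crossing pair $(i,j)$ realizing $e_2$ sits in $I$ with $p^X_i<p^X_j$, making $P^X$ non-constant on $I$ and forcing Case b2). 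Case (2) is immediate: with $p^X_i<p^Y_i$, the vector $u$ supported only at $i$ with $u_i=p^X_i$ lies in $L_U$ with strict slack in the $Y$-constraint, which is exactly Case a); the case $p^X_i>p^Y_i$ yields the symmetric version. In case (3), both $e_{1,\delta}$ and $e_{2,\delta}$ drop strictly below $e_1$ near $\delta=1$, so Case b) holds. In case (4) the same mechanism gives Case b), and a crossing pair realizing $e_2=e_{\max}$ again forces Case b2).

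The main obstacle is verifying, in case (3), that $I\subseteq M:=\{i:p^X_i=p^Y_i=e_1\}$, which is precisely what puts us in Case b1). My plan is a swap argument. Fix $u\in L_U$ with support $S$ and any $k,k'\in S$; the perturbation $(u_k,u_{k'})\mapsto(u_k+\varepsilon,u_{k'}-\varepsilon)$ produces $v\in U$ exactly when $p^X_k\geq p^X_{k'}$ and $p^Y_k\geq p^Y_{k'}$, and in that case $v\in L_U$. Having exactly one of the two inequalities strict would revive Case a) or its symmetric, and both strict would contradict the saturation-of-a-constraint observation above. Running the swap in both directions across all pairs in $S$, the multiset $\{(p^X_k,p^Y_k):k\in S\}$ must form an anti-chain under the product order (with repeated points allowed). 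If this anti-chain contained two distinct elements $(\alpha_1,\beta_1)$ and $(\alpha_2,\beta_2)$ with $\alpha_1>\alpha_2$ and $\beta_1<\beta_2$, then the two group masses $m_1,m_2$ would saturate both primal constraints and thus realize the interior crossing of the two $2$-letter constraint lines, yielding $e(k_2,k_1)=m_1+m_2=e_1$ and hence $e_2\geq e_1$, contradicting $e_1>e_2$. So $S$ is constant in $(p^X_k,p^Y_k)$, and the tight normalizations $\sum_k u_k/p^X_k=\sum_k u_k/p^Y_k=1$ together with $\sum_k u_k=e_1$ pin the common value to $(e_1,e_1)$; hence $S\subseteq M$. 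Taking the union over $u\in L_U$ gives $I\subseteq M$, and the reverse inclusion is clear by placing all mass at a single $i\in M$.
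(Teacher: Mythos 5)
Your overall strategy is the same as the paper's: the characterization of Case a) via the one-parameter perturbation $\delta\mapsto e_{\max,\delta}$, followed by the same four-way case split, with identical arguments in cases (1), (2) and (4). Two of your additions are welcome clarifications the paper leaves implicit (the exhaustiveness of Case a), its symmetric, and Case b) via the observation that every $u\in L_U$ saturates a constraint; and the explicit monotonicity of $e_{2,\delta}$ — note, though, that you write that $e_{2,\delta}$ is ``strictly decreasing in $\delta$'' while your parenthetical correctly says each $e_\delta(i,j)$ is strictly \emph{increasing} in $\delta$, which is the direction you actually use; also the condition for $e_{1,\delta}=e_1$ near $\delta=1$ should be $p^X_i<p^Y_i$, not $p^X_i\leq p^Y_i$, since $p^X_i=p^Y_i=e_1$ gives $p^X_i\wedge\delta p^Y_i=\delta e_1<e_1$).

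The genuine gap is in your case (3) argument for $I\subseteq M:=\{i:p^X_i=p^Y_i=e_1\}$ (a step the paper asserts without proof). Your swap argument correctly shows that for $u\in L_U$ the multiset $\{(p^X_k,p^Y_k):k\in\mathrm{supp}(u)\}$ is an antichain for the product order. But the next step — ``the two group masses $m_1,m_2$ would saturate both primal constraints and thus realize the interior crossing of the two $2$-letter constraint lines, yielding $e(k_2,k_1)=m_1+m_2=e_1$'' — is only valid when the antichain has \emph{exactly} two distinct values. If there are three or more groups, the remaining groups also carry positive mass, so $m_1/\alpha_1+m_2/\alpha_2<1$ and $m_1/\beta_1+m_2/\beta_2<1$, and $(m_1,m_2)$ is not the crossing point of the two-letter constraint lines; no contradiction with $e_1>e_2$ follows. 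An antichain with three or more elements is not excluded by anything you have established (for instance $(0.3,0.15),(0.2,0.2),(0.15,0.3)$ all lie on $0.1/\alpha+0.1/\beta=1$ and can all receive positive mass in an optimizer — this happens to land in case (4) rather than case (3), but your argument gives no reason why such a configuration is impossible under $e_1>e_2$). The clean way to close this is to use that $L_U$ is the optimal face of the linear program $\max\phi$ over the polytope $U$, hence the convex hull of its optimal vertices; every vertex of $U$ has at most two nonzero coordinates, the two-support (crossing) vertices have value at most $e_2<e_1$ and so are not optimal in case (3), and the optimal singleton vertices are exactly the $e_1\delta_i$ with $p^X_i\wedge p^Y_i=e_1$, which by the case (3) hypothesis lie in $M$; taking supports of convex combinations then gives $I\subseteq M$. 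Alternatively, one can feed Lemma~\ref{reductotwo} into your scheme, but one must then still pass from a two-letter optimizer back to the full union of supports defining $I$, which is precisely where the face argument is needed.
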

To present another explicit example, let us fully corner the case $m=2$, 
with $p_1^X, p_2^X, p_1^Y,$ and $p_2^Y$.  The following completely describes the various cases:  
\begin{itemize}

\item If $p^X_1=p^Y_1$, then (since, necessarily, $p^X_2=p^Y_2$) $e_{max}=\max(p^X_1, p^X_2)=\max(p^X_1, 1-p^X_1)$ and we are in Case b1).

\item  If $p^X_1\neq p^Y_1$ and $1/2\in (\min(p^X_1, p^Y_1), \max(p^X_1,p^Y_1))$, then 
$$e_{max}=\max(\min(p^X_1, p^Y_1), \min(p^X_2, p^Y_2))=\max(\min(p^X_1, p^Y_1), \min(1-p^X_1, 1-p^Y_1)),$$ and we are in Case a) or its symmetric.

\item  If $p^X_1\neq p^Y_1$ and $1/2\notin (\min(p^X_1, p^Y_1), \max(p^X_1, p^Y_1))$, then 
$$e_{max}=\frac{p^X_1 p^Y_1(p^X_2-p^Y_2)+p^X_2 p^Y_2(p^Y_1-p^X_1)}{p^Y_1 p^X_2-p^X_1 p^Y_2}= p^X_1 p^Y_1 + p^X_2 p^Y_2 = p^X_1 p^Y_1 + (1-p^X_1)(1-p^Y_1), 
$$ 
and we are in Case b2).

\end{itemize}

\section{The limiting law}

It is clear, from the previous section, that the proper way to center (and normalize) $LCI_n$ is via \begin{align*}
Z_n & =\frac{LCI_n-n e_{\max}}{\sqrt{n}} \\
& =\max_{\substack {\lambda\in \Lambda^2}} \sum_{i=1}^m\left[\left(\sqrt{n} p^X_i\lambda^X_i+\widetilde{V}^{n,X}_i(\lambda^X)\right)\wedge \left(\sqrt{n} p^Y_i\lambda^Y_i+\widetilde{V}^{n,Y}_i(\lambda^Y)\right)\right]-\sqrt{n}e_{\max}.
\end{align*}

\noindent
Let also
\begin{align*}
Z^c_n & =\frac{LCI^c_n-n e_{\max}}{\sqrt{n}} \\
& =\max_{\substack {\lambda\in \Lambda^2}} \sum_{i=1}^m\left[\left(\sqrt{n} p^X_i\lambda^X_i+V^{n,X}_i(\lambda^X)\right)\wedge \left(\sqrt{n} p^Y_i\lambda^Y_i+V^{n,Y}_i(\lambda^Y)\right)\right]-\sqrt{n}e_{\max},
\end{align*}
from (\ref{closetoc}) we have 
\begin{equation} \label{probsection}
|Z_n-Z^c_n| \leq \frac{m}{\sqrt{n}},
\end{equation}and therefore the convergence in distribution of $Z^c_n$ will imply the convergence, in distribution, of $Z_n$ towards the same limit.

\subsection{Statement of the theorem}

Below is the main result of the paper.  In this statement, 
the covariance matrices of the Brownian motions stem from the covariance matrix of the 
rescaled variables $(\mathds{1}_{X_{k}=i})_{i \in I}$ 
(resp.~$\mathds{1}_{Y_{k}=i}, i\in I$) used to construct the polygonal 
approximations $B^{n,\bullet}_i$ (here, and throughout, 
$\bullet$ is short for either $X$ or $Y$).  Indeed, note that $\mathds{E}\left(\frac{(\mathds{1}_{X_{k}=i}-p^X_i)(\mathds{1}_{X_{k}=j}-p^X_j)}{\sqrt{p^X_i(1-p^X_i)}\sqrt{p^X_j(1-p^X_j)}}\right)=-\sqrt{\frac{p^X_i p^X_j}{(1-p^X_i)(1-p^X_j)}}$ (with a similar result for $Y$).

\begin{thm} \label{theoreme}
Let $B^X$ and $B^Y$ be two independent $\vert I\vert$-dimensional Brownian motions defined on 
$[0,1]$ with respective covariance matrix $C^X$ defined by $C^X_{i,i}=1$ and $C^X_{i,j}=-\sqrt{\frac{p^X_i p^X_j}{(1-p^X_i)(1-p^X_j)}}$, for $i\neq j$ in $I$, and $C^Y$ defined in a similar fashion, replacing $p^X_i$ by $p^Y_i$ and $p^X_j$ by $p^Y_j$. For all $\lambda\in K_{\Lambda^2}$ and $i\in I$, set \begin{align*}
V^X_i(\lambda^X)=\sqrt{p^X_i(1-p^X_i)}\left(B^X_i\left(\sum_{j=1}^i\lambda^X_j\right)-B^X_i\left(\sum_{j=1}^{i-1}\lambda^X_j\right)\right),\\
V^Y_i(\lambda^Y)=\sqrt{p^Y_i(1-p^Y_i)}\left(B^Y_i\left(\sum_{j=1}^i\lambda^Y_j\right)-B^Y_i\left(\sum_{j=1}^{i-1}\lambda^Y_j\right)\right).
\end{align*}
If there exists $u\in L_U$ such that $\frac{u_1}{p^X_1}+\dots+\frac{u_m}{p^X_m} =1$ and 
$\frac{u_1}{p^Y_1}+\dots+\frac{u_m}{p^Y_m}<1$ (Case a)), then  \begin{equation}
\frac{LCI_n-ne_{\max}}{\sqrt{n}}\xRightarrow[n \to \infty]{}Z^a:=\max_{\substack {\lambda^X\in J}} \sum_{i\in I} V^X_i(\lambda^X), 
\end{equation}
where $J$ is given by \eqref{defJ}. 

\noindent
If for all $u\in L_U$, $\frac{u_1}{p^X_1}+\dots+\frac{u_m}{p^X_m} = \frac{u_1}{p^Y_1}+\dots+\frac{u_m}{p^Y_m}=1$ (Case b)), then \begin{equation}
\frac{LCI_n-ne_{\max}}{\sqrt{n}}\xRightarrow[n \to \infty]{}Z^b:=\max_{\substack {\lambda\in K_{\Lambda^2}}} \mathfrak{m}\left(V^X(\lambda^X),V^Y(\lambda^Y)\right), 
\end{equation}
where $\mathfrak{m}$ is given by \eqref{defm}.  
\end{thm}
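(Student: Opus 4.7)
By \eqref{probsection}, it suffices to study $Z_n^c$. Using \eqref{elem}, each summand of $Z_n^c+\sqrt n\,e_{\max}$ differs from $\sqrt n\bigl(p_i^X\lambda_i^X\bigr)\wedge\bigl(p_i^Y\lambda_i^Y\bigr)$ by at most $\max(|V_i^{n,X}|,|V_i^{n,Y}|)\le n^\eta$ on $A_n^\eta$, so with $f$ as in \eqref{defef},
\begin{equation*}
Z_n^c = \max_{\lambda\in\Lambda^2}\bigl\{\sqrt n[f(\lambda)-e_{\max}]+R_n(\lambda)\bigr\},\qquad |R_n|\le mn^\eta.
\end{equation*}
Since $f$ is concave and piecewise linear on $\Lambda^2$ with argmax $K_{\Lambda^2}$, there exists $c>0$ with $f(\lambda)-e_{\max}\le -c\,\mathrm{dist}(\lambda,K_{\Lambda^2})$. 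Combined with $Z_n^c\ge R_n(a)\ge -mn^\eta$ at any $a\in K_{\Lambda^2}$, this forces every near-maximizer to lie within $O(n^{\eta-1/2})$ of $K_{\Lambda^2}$ with probability tending to one, and the outer max may be so restricted.

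\textbf{Case a).}
By Lemma~\ref{icasa}, $p_i^X=p^X_{\max}$ for $i\in I$ and $p^Y_{i_1}>p^X_{\max}$ for some $i_1\in I$. For every $\lambda^X\in J$, one chooses $\lambda^Y$ exploiting this slack so that $p_i^X\lambda_i^X<p_i^Y\lambda_i^Y$ whenever $\lambda_i^X>0$; the bound $|V_i^{n,\bullet}|\le n^\eta$ then guarantees that every minimum is realized by its $X$-term, yielding the lower bound. Conversely, bounding each minimum by its $X$-term and invoking the localization yields the matching upper bound, the constraint $\lambda_i^X=0$ on $I^c$ (coming from $\sum_i p_i^X\lambda_i^X\le e_{\max}$) forcing $\lambda^X\in J$. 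Thus,
\begin{equation*}
Z_n^c = \max_{\lambda^X\in J}\sum_{i\in I}V_i^{n,X}(\lambda^X)+o_{\mathds{P}}(1).
\end{equation*}
Donsker's invariance principle applied to the centered multinomial vectors $(\mathds{1}_{X_k=i}-p_i^X)_{i\in I}$ gives $B^{n,X}\Rightarrow B^X$ in $C([0,1])^{|I|}$ with covariance $C^X$, so $V^{n,X}\Rightarrow V^X$ uniformly on $\Lambda$; continuous mapping yields $Z_n^c\Rightarrow Z^a$.

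\textbf{Case b).}
Here $p_i^X a_i^X=p_i^Y a_i^Y$ for every $i$ and every $a\in K_{\Lambda^2}$. Writing $\lambda=a+x/\sqrt n$ with $a$ a projection of $\lambda$ onto $K_{\Lambda^2}$, the localization gives $\|x\|_\infty=O(n^\eta)$, and the H\"older-type bound on $B^{n,\bullet}$ valid on $A_n^\eta$ gives $|V_i^{n,\bullet}(\lambda)-V_i^{n,\bullet}(a)|=o(1)$ (for $\eta<1/6$). The common value $np_i^X a_i^X=np_i^Y a_i^Y$ factors out of each minimum, leading to
\begin{equation*}
Z_n^c = \max_{a\in K_{\Lambda^2}}\max_{x\in E'^2}\sum_{i=1}^m\bigl[(p_i^X x_i^X+V_i^{n,X}(a^X))\wedge(p_i^Y x_i^Y+V_i^{n,Y}(a^Y))\bigr]+o_{\mathds{P}}(1),
\end{equation*}
with $E'^2$ the linearized tangent cone to $\Lambda^2$ at $a$. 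The inner max is exactly $\mathfrak{m}(V^{n,X}(a^X),V^{n,Y}(a^Y))$ in the sense of \eqref{defm}; by Lemma~\ref{exprm} it is attained on a bounded set and is a continuous (piecewise linear) function of its input. Donsker, continuous mapping, and the compactness of $K_{\Lambda^2}$ then deliver $Z_n^c\Rightarrow Z^b$.

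\textbf{Main obstacle.}
The most delicate step is the two-level reduction in Case b): one must pair each near-optimal $\lambda$ with a neighbor $a\in K_{\Lambda^2}$ so that $x=\sqrt n(\lambda-a)$ lies in the tangent cone $E'^2$ and stays uniformly bounded, which requires a quantitative projection onto $K_{\Lambda^2}$ with the constant supplied by Lemma~\ref{exprm}. One must also control the replacement error $V^{n,\bullet}(\lambda)\to V^{n,\bullet}(a)$ uniformly in $a$, and ensure that the outer maximum over the (possibly positive-dimensional) compact $K_{\Lambda^2}$ commutes with weak convergence via the explicit continuity of $\mathfrak{m}$.
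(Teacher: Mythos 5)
Your overall strategy coincides with the paper's: reduce to $Z_n^c$, localize near-maximizers of $z_n$ to an $O(n^{\eta-1/2})$-neighborhood of $K_{\Lambda^2}$ via a quantitative error bound for $f$, sandwich by the $X$-term in Case a), separate the parameters $\lambda=a+x/\sqrt n$ in Case b) and identify the inner maximum with $\mathfrak m$, then conclude by Donsker and continuous mapping. Deriving the error bound $e_{\max}-f(\lambda)\ge c\,\mathrm{dist}(\lambda,K_{\Lambda^2})$ from concavity and piecewise linearity (a Hoffman-type bound) is a legitimate substitute for the paper's compactness argument (Lemma~\ref{proj}), and your upper bounds in both cases do follow from this localization together with the H\"older bound \eqref{boundv2}.

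The genuine gap is the lower bound in both cases --- exactly the ``main obstacle'' you flag but do not resolve, and it is not merely technical. (i) In Case a), for $\lambda^X$ on the relative boundary of $J$, where $\sum_{i\in I}\lambda^X_i/p^Y_i=1/p^X_{\max}$, there is no room to choose $\lambda^Y$ with slack $p^Y_i\lambda^Y_i-p^X_{\max}\lambda^X_i\ge 2n^{\eta-1/2}$ in every coordinate, so you cannot force each minimum onto its $X$-term; the paper shrinks $J$ to $J_n^{-}$ and contracts boundary points toward $\delta_{i_1}$, using $p^Y_{i_1}>p^X_{\max}$ from Lemma~\ref{icasa}, at a cost $O(n^{(6\eta-1)/4})$ controlled by \eqref{boundv2}. (ii) In Case b), $K_{\Lambda^2}$ may contain points $a$ with $a^\bullet_i$ arbitrarily small, or zero, for some $i\in I$; for such $a$ the maximizing $x\in E'^2$ of $\mathfrak m\bigl(V^{n}(a)\bigr)$, of norm $O(n^{\eta})$ and hence $O(n^{\eta-1/2})$ in the $\lambda$-scale, need not satisfy $a+x/\sqrt n\in\Lambda^2$, so the asserted identity $Z_n^c=\max_{a}\max_{x\in E'^2}(\cdots)+o_{\mathds{P}}(1)$ is not established uniformly over $K_{\Lambda^2}$; without it you only obtain a one-sided comparison. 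The paper's fix (its ``independence of the parameters'' step) replaces $K\cap P$ by the shrunken set $K\cap P_n$ and contracts toward the interior point $a$ built from $u^I$ in \eqref{defui}, again paying $O(n^{(6\eta-1)/4})$ --- this is precisely where the hypothesis $\eta<1/6$ is consumed. Supplying these two contraction arguments would complete your proof along the paper's lines.
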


At this point, one can remark that $e_{\max}$ is invariant with respect to the order in which 
the letters are chosen, and that 
both in Case a) and Case b1), the above limiting laws are invariant as well (to see this fact in Case a), recall Lemma~\ref{icasa}). Therefore, in Case a) and Case b1), no matter the prescribed order (increasing, decreasing, etc..) the asymptotic behavior of the length of the corresponding optimal alignments is the same.  
We refer the reader to Section~\ref{genblock} for more general results of this flavor.  

In Case b2) it is less clear that the limiting distribution is permutation-invariant as it might not just boil down 
to $\mathfrak{m}(\nu)$.  Indeed, in Case b2) the limiting law can be written as 
the law of   
$$Z=\max_{\lambda\in K_{\Lambda^2}} \sum_{\substack{i\in\{1, \dots, m\}\\ \bullet \in\{X,Y\}}} V(\lambda)^\bullet_i,$$ 
where $V(\lambda)$ is in $\left(\mathbb{R}^m\right)^2$, 
and defined via 
$$V^\bullet(\lambda)_i=B^\bullet_i\left(\sum_{j=1}^{i}\lambda^\bullet_j\right)-B^\bullet_i\left(\sum_{j=1}^{i-1}\lambda^\bullet_j\right),$$ 
where the $B^\bullet_i$ are Brownian motions which are, up to a multiplicative factor, as in our main theorem.  Further introducing, for any permutation 
$\sigma$ of $\left\{1, \dots, n\right\}$, $V_\sigma(\lambda)$ defined via 
$$V^\bullet_\sigma(\lambda)_i=B^\bullet_i\left(\sum_{j=1}^{\sigma^{-1}(i)}\lambda^\bullet_{\sigma(j)}\right)-B^\bullet_i\left(\sum_{j=1}^{\sigma^{-1}(i)-1}\lambda^\bullet_{\sigma(j)}\right),$$ 
we have $V(\lambda)=V_{\text{Id}}(\lambda)$, where $\text{Id}$ is the identity permutation.  
When the letters are not required to be 
non-decreasing, but instead follow an 
order given by $\sigma$, the limiting law is simply the law of 
$Z_\sigma:=\max_{\lambda\in K_{\Lambda^2}} \sum_{\substack{i\in\{1, \dots, m\}\\ \bullet \in\{X,Y\}}} V_\sigma(\lambda)^\bullet_i$. 
It is still not that clear whether or not 
this last quantity depends on $\sigma$.  For example, if $m=3$ and $K_{\Lambda^2}=\Lambda^2$ and $B^X_1$ is a standard Brownian motion, while all others are null, define $\sigma$ by $\sigma(1)=2, \sigma(2)=1, \sigma(3)=3$, then with probability one $Z_{\sigma}>Z_{\text{Id}}$.  However, in Case b2), it is actually 
not possible to have $K_{\Lambda^2}=\Lambda^2$ (and also to have only one non null Brownian motion) but this shows that a general argument for the validity of the permutation-invariance is not that transparent.

\subsection{Proof of Theorem~\ref{theoreme}}

The proof of this theorem is based on a non-probabilistic lemma. First, let $E^\eta_n$ be the set of all continuous functions $b$ from $[0,1]$ into $\mathbb{R}$ such that: for all $x,y$ in $[0,1]$, $\vert b(y)-b(x)\vert \leq 
\left(n^\eta\sqrt{\vert y-x\vert}+n^{\eta-1/2}\right)/2$. Then, for all $b\in \left(E^\eta_n\right)^{m}$, 
$i\in\ens{m}$ and $\lambda\in\Lambda$, set $v^b_i(\lambda)=b_i(\lambda_1+\dots+\lambda_i)-b_i(\lambda_1+\dots+\lambda_{i-1})$, and for all $b^X, b^Y\in \left(E^\eta_n\right)^{m}$ and $\lambda\in\Lambda^2$ let 
\begin{equation*}
z_n(\lambda)=\sum_{i=1}^m\left[\left(\sqrt{n} p^X_i \lambda^X_i+v^{b^X}_i(\lambda^X)\right)
\wedge \left(\sqrt{n} p^Y_i \lambda^Y+v^{b^Y}_i(\lambda^Y)\right)\right]-\sqrt{n}e_{\max}.
\end{equation*}

One can think of $b_i^X$ (resp. $b_i^Y$) as $\sqrt{p_i^X(1-p_i^X)}B_i^{n,X}(\omega)$ 
(resp.~$\sqrt{p_i^Y(1-p_i^Y)}B^{n,Y}(\omega)$) for a fixed $\omega\in A^\eta_n$, where the symbol $b^X$ (resp.~$b^Y$) is used for ease of notation 
and in order to emphasize the non-probabilistic nature of the proof. For further ease of notation, 
we omit the dependency in $b^X$ and $b^Y$ in the notation $z_n$. 
This omission is also present in $v$ and $v^X$ is just short for $v^{b^X}$ (similarly 
with $Y$), and further write $v(\lambda):=\left(v^X(\lambda^X),v^Y(\lambda^Y)\right)$.
\noindent
In Case a), for all $\lambda^X \in \Lambda$, let 
\begin{equation}\label{defza}
z^{a}(\lambda^X):=\sum_{i\in I} v^{X}_i(\lambda^X).
\end{equation}
In Case b), for all $\lambda\in \Lambda^2$, let 
\begin{equation}\label{defzb}
z^{b}(\lambda)=\mathfrak{m}\left(v^X(\lambda^X),v^Y(\lambda^Y)\right).  
\end{equation}

Next, let us finally present two simple inequalities stemming from the very 
definition of $E^\eta_n$, often used in the sequel, 
which are valid for all $b\in E^\eta_n$, $\lambda,\lambda'\in \Lambda$, $i\in\ens{m}$, 
$\bullet\in\{X,Y\}$, namely,
\begin{equation}\label{boundv1}
\vert v^\bullet_i(\lambda^\bullet)\vert \leq \frac{n^\eta \sqrt{\lambda^\bullet_i}
+n^{\eta-1/2}}{2}\quad \text{and in particular} \quad \vert v^\bullet_i(\lambda^\bullet)\vert 
\leq n^\eta,
\end{equation}
\begin{equation}\label{boundv2}
 \vert v^\bullet_i(\lambda^\bullet)-v^\bullet_i(\lambda'^\bullet)\vert\leq n^\eta \sqrt{\max_{i\in\ens{m}}|\lambda_1+\dots+\lambda_i-\lambda'_1-\dots-\lambda'_i|}+n^{\eta-1/2}\leq 
n^\eta \sqrt{m \vert \|\lambda-\lambda'\|_\infty}+n^{\eta-1/2}.
\end{equation}

\begin{lem}\label{lemme}
There exists a sequence $(\varepsilon_n)_{n\geq 1}$ of positive reals converging to zero and 
such that for all $n\geq 1$ and $b^X, b^Y\in \left(E^\eta_n\right)^{m}$, 
either $\vert \max_{\lambda \in \Lambda^2} z_n(\lambda)-\max_{\lambda\in J} z^{a}(\lambda)\vert\leq \varepsilon_n$, or $\vert \max_{\lambda \in \Lambda^2} z_n(\lambda)-\max_{\lambda\in K_{\Lambda^2}} z^{b}(\lambda)\vert\leq \varepsilon_n$, in Case a) or b), respectively.
\end{lem}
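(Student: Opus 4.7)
The plan is to combine a \emph{localization} step (restricting $\lambda$ to a shrinking neighborhood of $K_{\Lambda^2}$) with a \emph{freezing} step (replacing the fluctuations $v^\bullet(\lambda^\bullet)$ by $v^\bullet(\lambda^{*\bullet})$ for a nearby $\lambda^*\in K_{\Lambda^2}$). The starting point is the pointwise bound $|z_n(\lambda)-\sqrt n(f(\lambda)-e_{\max})|\leq m n^\eta$, obtained by applying \eqref{elem} termwise together with \eqref{boundv1}. Since $f$ is a concave piecewise linear function on the polytope $\Lambda^2$ whose argmax set is $K_{\Lambda^2}$, a standard Hoffman-type error bound gives a constant $c_0>0$ (depending only on $p^X, p^Y$) with $f(\lambda)\leq e_{\max}-c_0\,d_\infty(\lambda,K_{\Lambda^2})$. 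Choosing $\delta_n:=D n^{\eta-1/2}$ for a large enough $D$ (depending only on $m,p^X,p^Y$), any $\lambda$ outside the $\delta_n$-neighborhood $\mathcal N_n$ of $K_{\Lambda^2}$ satisfies $z_n(\lambda)<z_n(\lambda^*)-1$ for every $\lambda^*\in K_{\Lambda^2}$, so that $\max_{\lambda\in\Lambda^2} z_n(\lambda)=\max_{\lambda\in\mathcal N_n} z_n(\lambda)$, uniformly in $b^X,b^Y\in (E^\eta_n)^m$.

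For $\lambda\in\mathcal N_n$, pick a nearest $\lambda^*\in K_{\Lambda^2}$ and write $\lambda=\lambda^*+y$, $\|y\|_\infty\leq\delta_n$. By \eqref{boundv2}, $|v^\bullet_i(\lambda^\bullet)-v^\bullet_i(\lambda^{*\bullet})|\leq n^\eta\sqrt{m\delta_n}+n^{\eta-1/2}=O(n^{3\eta/2-1/4})=o(1)$ thanks to $\eta<1/6$. Using \eqref{elem} once more lets us freeze the $v$'s at $\lambda^*$ with a uniform error $\varepsilon_n\to 0$. Setting $\tilde y:=\sqrt n\, y$, the problem reduces to
\begin{equation*}
z_n(\lambda)=\sum_{i=1}^m\bigl[\sqrt n\, p^X_i\lambda^{*X}_i+p^X_i\tilde y^X_i+v^X_i(\lambda^{*X})\bigr]\wedge\bigl[\sqrt n\, p^Y_i\lambda^{*Y}_i+p^Y_i\tilde y^Y_i+v^Y_i(\lambda^{*Y})\bigr]-\sqrt n\, e_{\max}+o(1),
\end{equation*}
where $\tilde y$ ranges over admissible perturbations with $\|\tilde y\|_\infty\leq\sqrt n\,\delta_n=D n^\eta$.

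In Case a), Lemma~\ref{icasa} gives $I=\{i:p^X_i=p^X_{\max}\}$ and forces $\lambda^{*X}_i=0$ for $i\notin I$. Choosing $\lambda^*=a$ from \eqref{defui}, every inequality $p^X_{\max}\lambda^{*X}_i<p^Y_i\lambda^{*Y}_i$ ($i\in I$) is strict, so for large $n$ the $X$-side attains every minimum for $i\in I$; for $i\notin I$, the $y^Y$-freedom inherited from the unused $Y$-slack in the definition of $J$ (see \eqref{defJ}) again keeps the $X$-side minimal. The sum then collapses to $\sqrt n\sum_i p^X_i\lambda^X_i-\sqrt n e_{\max}+\sum_{i\in I}v^X_i(\lambda^{*X})=\sum_{i\in I}v^X_i(\lambda^X)+o(1)$, using $\sum_i p^X_i\lambda^X_i=p^X_{\max}=e_{\max}$ for $\lambda^X$ supported on $I$, and the admissibility of $\lambda^X$ is exactly $\lambda^X\in J$. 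In Case b), every $\lambda^*\in K_{\Lambda^2}$ satisfies $p^X_i\lambda^{*X}_i=p^Y_i\lambda^{*Y}_i$ for all $i$, with both sides vanishing for $i\notin I$. The $\sqrt n$ contributions then cancel across each $\wedge$, and the displayed objective becomes $\sum_i\bigl[p^X_i\tilde y^X_i+v^X_i(\lambda^{*X})\bigr]\wedge\bigl[p^Y_i\tilde y^Y_i+v^Y_i(\lambda^{*Y})\bigr]$ with $\tilde y\in E'^2$ and $\|\tilde y\|_\infty\leq D n^\eta$. Taking $D$ large enough that $D n^\eta\geq 2Cm\|(v^X(\lambda^{*X}),v^Y(\lambda^{*Y}))\|_\infty$, the local representation \eqref{local} of Lemma~\ref{exprm} identifies the supremum over $\tilde y$ with $\mathfrak m(v^X(\lambda^{*X}),v^Y(\lambda^{*Y}))=z^b(\lambda^*)$; maximizing further over $\lambda^*\in K_{\Lambda^2}$ concludes.

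The delicate step is Case a): one must verify that the reduced maximum over pairs $(\lambda^*,\tilde y)$ really equals $\max_{\lambda^X\in J} z^a(\lambda^X)$ in both directions. This rests on (i) every $\lambda^X\in J$ being realizable as the $X$-component of some $\lambda^*+y\in\mathcal N_n$ with $\lambda^*\in K_{\Lambda^2}$, which is the content of the identification $J=\{\lambda^X:\lambda\in K_{\Lambda^2}\}$ in \eqref{defJ}, and (ii) the $y^Y$ perturbations available within $\mathcal N_n$ sufficing, uniformly in $n$, both to keep $\lambda^*+y$ in $\Lambda^2$ and to force the $X$-side to attain each minimum; the non-tightness of the $Y$-inequality defining $J$ is precisely what provides this slack.
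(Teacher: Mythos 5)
Your overall strategy (localize near $K_{\Lambda^2}$ via a linear-growth bound for $e_{\max}-f$, freeze the fluctuations $v$ at a nearby point of $K_{\Lambda^2}$ using \eqref{boundv2}, then identify the residual optimization with $z^a$ or with $\mathfrak{m}$) is the same as the paper's, and the localization and freezing steps are sound. However, the two places you pass over quickly are exactly where the paper's proof does its real work, and as written both contain genuine gaps. In Case b), after rescaling you assert that the supremum over admissible $\tilde y$ equals $\mathfrak{m}\left(v(\lambda^*)\right)$ by \eqref{local}, but \eqref{local} optimizes over all of $E'^2$ with only the constraints $\tilde y^\bullet_i\geq 0$ for $i\in I^c$ and a norm bound; the admissible set also carries the constraints $\tilde y^\bullet_i\geq -\sqrt{n}\,\lambda^{*\bullet}_i$ for $i\in I$, which you silently drop. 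Dropping them is harmless for the upper bound but not for the lower bound: when $\lambda^*\in K_{\Lambda^2}$ has some coordinate $\lambda^{*\bullet}_i$ smaller than the size $2Cmn^{\eta-1/2}$ of the optimal perturbation, the maximizer of \eqref{local} need not be realizable inside $\Lambda^2$. The paper repairs this by first retracting $\lambda^*$ toward the interior point $a$ (the sets $P_n$ and $D_n'$, using $a_{\min}>0$) and controlling the resulting change in $v$ via \eqref{boundv2}; some such step is needed in your argument too.

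The gap in Case a) is more serious. The limit is a maximum over all of $J$, so a single choice $\lambda^*=a$ does not suffice (and the strictness $p^X_{\max}a^X_i<p^Y_ia^Y_i$ you invoke is not guaranteed by the definition of $a$, which only gives $p^X_ia^X_i\wedge p^Y_ia^Y_i=u^I_i$). For the lower bound one must realize $z^a(\lambda^X)$ for every $\lambda^X\in J$, and the ``$Y$-slack'' you appeal to in point (ii) vanishes exactly on the boundary face $\sum_{i\in I}\lambda^X_i/p^Y_i=1/p^X_{\max}$ of $J$: there, summing the inequalities $p^X_{\max}\lambda^X_i\leq p^Y_i\lambda^Y_i$ forces equality in every coordinate, so no admissible $\lambda^Y$ makes the $X$-side strictly attain the minima, and your collapse argument breaks down precisely where you acknowledge it is delicate. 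The paper's resolution is a genuine additional idea: sandwich $\max_{\Lambda^2}z_n^I$ between maxima of $z^a$ over the perturbed sets $J_n^{-}\subset J\subset J_n^{+}$, realize the lower bound only on $J_n^{-}$ (where uniform slack $2n^{\eta-1/2}$ is available), and then transfer back to $J$ by interpolating each $\lambda^X$ toward $\delta_{i_1}$, where $i_1\in I$ satisfies $p^Y_{i_1}>p^X_{\max}$ (Lemma~\ref{icasa}), with the displacement controlled by \eqref{boundv2}. Without this (or an equivalent) boundary treatment, the Case a) half of the lemma is not established.
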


The proof of this crucial lemma is delayed 
to the next subsections, and instead we turn our attention to the proof of the main theorem.

\begin{proof}[Proof of Theorem~\ref{theoreme}]
Let us assume that Case b) is occurring.
Let \begin{equation*}
Z^b_n=\max_{\substack {\lambda\in K_{\Lambda^2}}} 
\mathfrak{m}\left(V^{n,X}(\lambda^X),V^{n,Y}(\lambda^Y)\right).
\end{equation*}
For all $\omega\in A^{\eta}_n$, $B^{n,X}(\omega)$ and $B^{n,Y}(\omega)$ are in $E^{\eta}_n$ so by Lemma~\ref{lemme}, $\vert Z^c_n(\omega)-Z^b_n(\omega)\vert \leq \varepsilon_n$. So $\left\vert Z^c_n-Z^b_n\right\vert\mathds{1}_{A^\eta_n}\leq \varepsilon_n$, but $Z^c_n-Z^b_n=\left(Z^c_n-Z^b_n\right)\mathds{1}_{A^\eta_n}+\left(Z^c_n-Z^b_n\right)\mathds{1}_{(A^{\eta}_n)^c}$, where this second term tends to zero in probability, therefore so does $Z^c_n-Z^b_n$. Next, by Donsker's theorem and the continuity of $\mathfrak{m}$ (recalling Lemma~\ref{exprm}), $Z^b_n$ tends to $Z^b$ in distribution, so does $Z^c_n$ and finally so is the case for $Z_n$, recalling (\ref{probsection}). The proof  in the Case a) is analogous and therefore omitted.
\end{proof}

Let us now turn to the proof of Lemma~\ref{lemme}. The method of proof goes as follows: Maximizing $z_n(\lambda)$ is equivalent to maximizing
\begin{equation*}
z_n(\lambda)/\sqrt{n}=\sum_{i=1}^m\left[\left(p^X_i \lambda^X_i+v^{b^X}_i(\lambda^X)/\sqrt{n}\right)\wedge \left(p^Y_i \lambda^Y+v^{b^Y}_i(\lambda^Y)/\sqrt{n}\right)\right]-e_{\max}, 
\end{equation*}
which converges, as $n$ goes to infinity, to $f(\lambda)-e_{\max}$.  So one can expect that $\lambda$ must  "almost" be maximizing $f$, i.e., be in or "close to" the set $K_{\Lambda^2}$. In Case a), we bound the 
maximum by taking the maximum over two sets which are closer and closer to the set $J$.  
In  Case b), first write $\lambda=\lambda^{K_{\Lambda^2}}+\lambda^r$ 
(actually dealing with a $\lambda-a$ in order to have a vector space, but the idea is the same), then ignore the small  perturbation term $\lambda^r$ in $v$, and the idea is (roughly) to fix $\lambda^{K_{\Lambda^2}}$ and to find the maximum over $\lambda^r$.  In both cases, the end of the proof consists in showing how the maximum of the relevant function ($z^a$ or $z^b$) over a set of parameters that "tends to" a limiting set goes to the maximum over this limiting set.

\subsection{Proof of Lemma~\ref{lemme}, Case a)}
\subsubsection{Restriction to $I$}

First, fix $b=(b^X,b^Y)\in\left(\left(E^\eta_n\right)^{m}\right)^2$.  Next, 
for ease of notation, omit in the sub-index $b$ in $z$ and $v$.  
Roughly speaking, we begin by proving that any $\lambda$ maximizing $z_n$ must have "small" coordinates outside of $I$, and therefore we can "replace" the variations $v^._i$, for $i\notin I$, 
by zero.

Let \begin{equation}
p^X_{\text{sec}}= \begin{cases} 
      \max_{i\notin I} {p^X_i} & I\neq \ens{m}, \\
      0 & I=\ens{m} \\
   \end{cases}.
\end{equation}
Let us assume first that $I\neq\ens{m}$. Then by Lemma~\ref{icasa}, $p^X_{\text{sec}}<p^X_{\max}$. Our first observation is that if $\lambda$ maximizes $z_n$, i.e., if 
$z_n(\lambda)=\max_{\lambda\in \Lambda^2}z_n(\lambda)$, then 
\begin{equation}\label{boundofs}
s:=\sum_{i\notin I} \lambda^X_i\leq \frac{2m n^{\eta-1/2}}{p^X_{\max}-p^X_{\text{sec}}}.
\end{equation}

In words, the above indicates that the contribution of the letters not in $I$ is, as expected, 
very limited. 
To prove this inequality, note that on the one hand (recalling Lemma~\ref{icasa} and \eqref{boundv1}),
\begin{equation*}
z_n(\lambda)\leq \sum_{i=1}^m\left(\sqrt{n} p^X_i \lambda^X_i+v^{b}_i(\lambda)\right)-\sqrt{n}p^X_{\max}\leq \sqrt{n}\left(p^X_{\max}(1-s)+p^X_{\text{sec}}s\right)+m n^\eta-\sqrt{n}p^X_{\max},
\end{equation*}
while on the other hand, for $\tilde{\lambda}\in K_{\Lambda^2}$, using \eqref{boundv1} 
and the elementary inequality \eqref{elem},
\begin{equation}
\label{eqK}z_n(\lambda)\geq z_n(\tilde{\lambda})\geq \sqrt{n} f(\tilde{\lambda})-m n^\eta 
-\sqrt{n}p^X_{\max}=-m n^\eta. 
\end{equation}
The inequality \eqref{boundofs} follows, and it therefore allows, 
for $i\notin I$, to replace 
the terms $v_i^X(\lambda^X)$ 
by zero.  More precisely, let for all $\lambda\in\Lambda^2$, 
\begin{equation*}
z^I_n(\lambda)=\sum_{i\in I}\left[\left(\sqrt{n} p^X_i \lambda^X_i+v^X_i(\lambda^X)\right)\wedge \left(\sqrt{n} p^Y_i \lambda^Y_i+v^Y_i(\lambda^Y)\right)\right]+\sum_{i\notin I}\left[\left(\sqrt{n} p^X_i \lambda^X_i\right)\wedge \left(\sqrt{n} p^Y_i \lambda^Y_i+v^Y_i(\lambda^Y)\right)\right]-\sqrt{n}e_{\max},
\end{equation*}
then as shown next, 
\begin{equation}
\label{ineqI}\left\vert \max_{\lambda\in \Lambda^2}z_n(\lambda)-\max_{\lambda\in \Lambda^2}z^I_n(\lambda)\right\vert \leq \frac{\vert I^c\vert}{2}\left(n^\eta\sqrt{\frac{2m n^{\eta-1/2}}{p^X_{\max}-p^X_{\text{sec}}}}+n^{\eta-1/2}\right),
\end{equation}
and this inequality remains true when $I=\ens{m}$ (since then $\max_{\lambda\in \Lambda^2}z_n(\lambda)
=\max_{\lambda\in \Lambda^2}z^I_n(\lambda)$ and $\vert I^c\vert=0$).

Indeed, let $\lambda\in\Lambda^2$ be such that $z_n(\lambda)=\max_{\lambda\in \Lambda^2}z_n(\lambda)$. Using \eqref{elem} along with \eqref{boundv1} ($\lambda^X_i\leq {2m n^{\eta-1/2}}/(p^X_{\max}-p^X_{\text{sec}})$, for all $i\notin I$), it follows that 
$$\max_{\lambda\in \Lambda^2}z^I_n(\lambda)\geq z^I_n(\lambda)\geq \max_{\lambda\in \Lambda^2}z_n(\lambda)-\frac{\vert I^c\vert}{2}\left(n^\eta\sqrt{\frac{2m n^{\eta-1/2}}{p^X_{\max}-p^X_{\text{sec}}}}+n^{\eta-1/2}\right).$$ 
Moreover, let $\tilde{\lambda}\in\Lambda^2$ be such that $\max_{\lambda\in \Lambda^2}z^I_n(\lambda)=z^I_n(\tilde{\lambda})$. Then, just as in proving \eqref{boundofs}, it follows that 
$\sum_{i\notin I} \tilde{\lambda}^X_i\leq {2 \vert I\vert n^{\eta-1/2}}/(p^X_{\max}-p^X_{\text{sec}})$. Hence 
$$\max_{\lambda\in \Lambda^2}z_n(\lambda)\geq z_n(\tilde{\lambda})\geq \max_{\lambda\in \Lambda^2}z^I_n(\lambda)-\frac{\vert I^c\vert}{2}\left(n^\eta\sqrt{\frac{2m n^{\eta-1/2}}{p^X_{\max}-p^X_{\text{sec}}}}+n^{\eta-1/2}\right),$$ 
which completes the proof.

\subsubsection{Bounds on the maximum with different sets of constraints}
Let us next define two sets "close" to $J$. To do so, let $S_n=2 \vert I\vert ^2 n^{\eta-1/2}$, let $C_I=\sum_{i\in I} \frac{1}{p^Y_i}$, let $T_n= C_I 2 n^{\eta-1/2}$, and finally let 
\begin{equation*}
J_n^{+}=\left\{\lambda^X\in \Lambda\::\: \sum_{i\in I} \frac{\lambda^X_i}{p^Y_i}\leq \frac{1+S_n}{p^X_{\max}}\right\},
\end{equation*}
and 
\begin{equation*}
J_n^{-}=\left\{\lambda^X\in \Lambda\::\: \sum_{i\in I} \frac{\lambda^X_i}{p^Y_i}\leq 
\frac{1-T_n}{p^X_{\max}}\right\}.
\end{equation*}

Note that by Lemma~\ref{icasa}, setting $\delta_{i_1}=\left(\mathds{1}_{i=i_1}\right)_{i\in\ens{m}}$, 
$\delta_{i_1}\in J_n^{-}$ eventually.
We show, in this part of the proof, that 
\begin{equation}\label{gendarmes}
\max_{\lambda\in J_n^{-}}z^a(\lambda)\leq \max_{\lambda\in \Lambda^2}z^I_n(\lambda)
\leq \max_{\lambda\in J_n^{+}}z^a(\lambda).
\end{equation}

Let us prove the upper bound first. Let $\lambda\in\Lambda^2$ be such 
that $z^I_n(\lambda)=\max_{\lambda\in \Lambda^2}z^I_n(\lambda)$, and 
let $S$ be the unique real such that 
\begin{equation*}
\sum_{i\in I} \frac{\lambda^X_i}{p^Y_i}=\frac{1+S}{p^X_{\max}}.
\end{equation*}

Then, there exists $i_0\in I$ such that, 
\begin{equation*}
\lambda^Y_{i_0} p^Y_{i_0}\leq \lambda^X_{i_0}p^X_{\max}-\frac{S}{\vert I\vert},
\end{equation*} 
since otherwise, $\sum_{i\in I} \lambda^Y_i>1$, which is a contradiction. 
Then, using the following inequalities, 
\begin{align*}
\forall i\in I\setminus\{i_0\}\quad \left(\sqrt{n} p^X_i \lambda^X_i+v^X_i(\lambda^X)\right)\wedge \left(\sqrt{n} p^Y_i \lambda^Y_i+v^Y_i(\lambda^Y)\right)& \leq \left(\sqrt{n} p^X_i \lambda^X_i+v^X_i(\lambda^X)\right),\\
\left(\sqrt{n} p^X_{i_0} \lambda^X_{i_0}+v^X_{i_0}(\lambda^X)\right)\wedge \left(\sqrt{n} p^Y_{i_0} \lambda^Y_{i_0}+v^Y_{i_0}(\lambda^Y)\right) & \leq \left(\sqrt{n} \left(\lambda^X_{i_0}p^X_{\max}-\frac{S}{\vert I\vert}\right)+v^Y_{i_0}(\lambda^Y)\right),\\
\forall i\notin I\quad\left(\sqrt{n} p^X_i \lambda^X_i\right)\wedge \left(\sqrt{n} p^Y_i \lambda^Y_i+v^Y_i(\lambda^Y)\right) & \leq \sqrt{n} p^X_i \lambda^X_i,
\end{align*} 
\noindent
leads to 
\begin{align*}
z^I_n(\lambda)&\leq \sqrt{n}\sum_{i=1}^m p^X_i \lambda^X_i+\sum_{i\in I\setminus\{i_0\}}\left(v^X_i(\lambda^X) +v^Y_{i_0}(\lambda^Y)\right)-\sqrt{n}\frac{S}{\vert I\vert}-\sqrt{n}e_{\max}\\
&\leq \sum_{i\in I\setminus\{i_0\}}\left(v^X_i(\lambda^X) +v^Y_{i_0}(\lambda^Y)\right)
-\sqrt{n}\frac{S}{\vert I\vert}\\
&\leq \vert I\vert n^\eta -\sqrt{n}\frac{S}{\vert I\vert}.
\end{align*} 
\noindent
Just as in obtaining the inequality \eqref{eqK}, we have $-\vert I\vert n^\eta\leq z^I_n(\lambda)$, hence 
$S\leq 2\vert I\vert ^2 n^{\eta-1/2}$, i.e., $\lambda^X\in J_n^{+}$, leading to conclude with the 
upper estimate:  
\begin{equation*}
\max_{\lambda\in \Lambda^2}z^I_n(\lambda)=z^I_n(\lambda)\leq \sqrt{n}f(\lambda^X)+z^a(\lambda^X)-\sqrt{n}e_{\max}\leq z^a(\lambda^X)\leq \max_{\lambda\in J_n^{+}}z^a(\lambda).
\end{equation*}

Let us now turn our attention to the lower bound. Let $\lambda^X\in J_n^{-}$ be such that $z^a(\lambda^X)
=\max_{\lambda\in J_n^{-}}z^a(\lambda)$. 
Since $$\sum_{i\in I}\left(p^X_{\max}\lambda^X_i+2 n^{\eta-1/2}\right)/p^Y_i\leq 1,$$ there 
exists $\lambda^Y\in \Lambda$ such that for $i\in I$, $\lambda^Y_i\geq \left(p^X_{\max}\lambda^X_i+2 n^{\eta-1/2}\right)/p^Y_i$ and for $i\notin I$, $\lambda^Y_i=0$. For all $i\in I$, 
\begin{equation*}
\sqrt{n} p^Y_i \lambda^Y_i+v^Y_i(\lambda^Y)\geq \sqrt{n} p^X_{\max}\lambda^X_i + 2n^{\eta}+ v^Y_i(\lambda^Y)\geq \sqrt{n} p^X_{\max} \lambda^X_i+v^X_i(\lambda^X)=\sqrt{n} p^X_i \lambda^X_i+v^X_i(\lambda^X).  
\end{equation*} 
Therefore, 

\begin{equation*}
z^I_n(\lambda)=\sum_{i\in I} \left(\sqrt{n} p^X_i \lambda^X_i+v^X_i(\lambda^X)\right) + 
\sum_{i\notin I} \left[(\sqrt{n} p^X_i \lambda^X_i)\wedge 0\right] -\sqrt{n} p^X_{\max}= \sum_{i\in I} v^X_i(\lambda^X)= z^a(\lambda^X)=\max_{\lambda\in J_n^{-}}z^a(\lambda),
\end{equation*}
and $\max_{\lambda\in J_n^{-}}z^a(\lambda)\le \max_{\lambda\in \Lambda^2}z^I_n(\lambda)$.

\subsubsection{End of the proof}

Both quantities $\vert \max_{\lambda\in J_n^{-}}z^a(\lambda)-\max_{\lambda\in J}z^a(\lambda)\vert$ 
and $\vert \max_{\lambda\in J_n^{+}}z^a(\lambda)-\max_{\lambda\in J}z^a(\lambda)\vert$ still need to be investigated. Let $C_1=\left(1-\frac{p^X_{\max}}{p^{Y}_{i_1}}\right)>0$. For $\lambda^X\in \Lambda$ and $t\in(0,1)$, let $\lambda^{X,t}=t \delta_{i_1} +(1-t)\lambda^X$. It is straightforward to prove that for all $n$ greater than some constant, depending only on $\eta$, $p^X$ and $p^Y$, and for all $\lambda^X\in J$, $\lambda^{X,\frac{T_n}{C_1}}$ is well defined, and is in $J_n^{-}$, while for 
all $\lambda^X\in J_n^{+}$, $\lambda^{X,\frac{2 S_n}{C_1}}\in J$.

\noindent
This is useful since for all $i\in \ens{m}$,
\begin{equation*}
\vert\lambda^X_1+\dots+\lambda^X_i-\lambda^{X,t}_1-\dots-\lambda^{X,t}_i\vert \leq 2 t,
\end{equation*}
and therefore, using \eqref{elem} along with \eqref{boundv2},
\begin{align*}
\max_{\lambda\in J}z^a(\lambda)-\max_{\lambda\in J_n^{-}}z^a(\lambda)\leq \vert I\vert \left(n^\eta\sqrt{\frac{2 T_n}{C_1}}+ n^{\eta-1/2}\right),\\
\max_{\lambda\in J_n^{+}}z^a(\lambda)-\max_{\lambda\in J}z^a(\lambda)\leq \vert I\vert \left(n^\eta\sqrt{\frac{4 S_n}{C_1}}+ n^{\eta-1/2}\right).
\end{align*}
Putting these two inequalities, together with \eqref{gendarmes}, leads to

\begin{equation*}
\left\vert \max_{\lambda\in \Lambda^2}z^I_n(\lambda) - \max_{\lambda\in J}z^a(\lambda)\right\vert 
\leq C_2 n^{\frac{6\eta-1}{4}}+\vert I\vert n^{\eta-1/2},
\end{equation*}
for some constant $C_2$ depending only on the $p$'s but need not be made explicit. 
The lemma is thus proved in this case.

\subsection{Proof of Lemma~\ref{lemme}, Case b)}
\subsubsection{Preliminaries}

Fix $b=(b^X,b^Y)\in\left(\left(E^\eta_n\right)^{m}\right)^2$. Just as in Case a), we omit in the 
notation 
the sub-index $b$. Let $E=\{x\in\mathbb{R}^{m}\::\: x_1+\dots+x_m=0\}$, let $K$ be the 
subspace of $E^2$ defined by
\begin{equation*}\label{defK}
K=\left\{x\in E^2\::\: \forall i \in I, p^X_i x^X_i=p^Y_i x^Y_i, \forall i\notin I, x^X_i=y^Y_i=0\right\},
\end{equation*}
and let $P$ (recalling the definition of $a$ following \eqref{defui}: $a\in K_{\Lambda^2}$, for all $i\in I, p^X_i a^X_i=p^Y_i a^Y_i>0$, for $i\notin I, a^\bullet_i=0$, and $f(a)=e_{\max}$) be given by:  
\begin{equation}\label{defP}
P=\left\{x\in E^2\::\: \forall i\in \ens{m}, x^X_i\geq -a^X_i, x^Y_i\geq -a^Y_i \right\}.
\end{equation}
Note that $\Lambda^2=a+P$. By definition of the case b), for all $\lambda\in K_{\Lambda^2}$, 
for all $i\in I$ $\lambda^X_i p^X_i=\lambda^Y_i p^Y_i$, while for all $i\notin I$, $\lambda^X_i=\lambda^Y_i=0$. Reciprocally, let $\lambda\in\Lambda^2$ such that for all $i\in I$ $\lambda^X_i p^X_i=\lambda^Y_i p^Y_i$ and for all $i\notin I$, $\lambda^X_i=\lambda^Y_i=0$, we show that $\lambda\in K_{\Lambda^2}$. Let $u\in\mathbb{R}^I$ be defined by $u_i=p^X_i\lambda^X_i-p^X_i a^X_i$ 
for all $i\in I$. We have that $u\cdot P^X=u\cdot P^Y=1-1=0$ so by 
Lemma~\ref{indep}, $u\cdot (1)_{i\in I}=0$, hence the result. 
This characterization of $K_{\Lambda^2}$, combined with $\Lambda^2=a+P$, 
gives us \begin{equation}\label{Klambda}
K_{\Lambda^2}=a+K\cap P.
\end{equation}

Since $p^X_i a^X_i=p^Y_i a^Y_i$, for all $i\in\ens{m}$, \begin{equation*}
z_n(a+x)=\sum_{i=1}^m\left[\left(\sqrt{n} p^X_i x^X_i+v^X_i(a^X+x^X)\right)\wedge \left(\sqrt{n} p^Y_i x^Y_i+v^Y_i(a^Y+x^Y)\right)\right].
\end{equation*}
Clearly, 
\begin{equation*}
\max_{\lambda\in \Lambda^2}z_n(\lambda)=\max_{\substack {x\in P}} z_n(a+x).
\end{equation*}
\noindent
Note also that for all $x\in \left(\mathbb{R}^m\right)^2$, $f(a+x)=f(a)+f(x)$ so by \eqref{Klambda} 
\begin{equation}\label{propkp}
\forall x \in P,\,f(x)\leq 0 \quad\text{and}\quad \left(f(x)=0\right) \iff \left(x\in K\cap P\right).
\end{equation} 
Our next result is an elementary projection result.
\begin{lem}\label{proj} 
There exists $C>0$ depending only on $p^X$ and $p^Y$ such that for all $x \in P$, 
there exist $x^{K\cap P}\in K\cap P$ and $x^r\in E^2$ 
such that $x=x^{K\cap P}+x^r$ and $\|x^r\|_{\infty}\leq -C f(x)$.
\end{lem}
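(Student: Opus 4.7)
The plan is to establish that $d_\infty(x, K \cap P) \leq -C f(x)$ for every $x \in P$; then taking $x^{K\cap P}$ to be any nearest point of $K \cap P$ and $x^r := x - x^{K\cap P}$ immediately yields the lemma.

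I would begin by collecting the structural facts. Since $a \in K_{\Lambda^2}$ and $\Lambda^2 = a + P$, the set $P$ is a compact convex polytope in $E^2$ with $0 \in P$. The function $f$ is concave and piecewise linear on $E^2$, being a sum of pointwise minima of pairs of linear forms, and by \eqref{propkp}, $f \leq 0$ on $P$ with zero set exactly $K \cap P$. In particular $0 \in K \cap P$, so $K \cap P$ is non-empty.

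The heart of the proof is a standard Hoffman-type error bound for a concave piecewise-linear function on a polytope. Subdivide $P$ into finitely many maximal polyhedral pieces $P_1, \dots, P_N$ on which $f$ restricts to an affine function $f|_{P_k}(x) = \langle v_k, x\rangle + c_k \leq 0$; these pieces are determined by the finitely many sign patterns of the expressions $p^X_i x^X_i - p^Y_i x^Y_i$ for $i \in \{1,\dots,m\}$. On each piece $P_k$ with $F_k := P_k \cap \{f = 0\} \neq \emptyset$, $F_k$ is the face of $P_k$ cut out by the extra equality $\langle v_k, x\rangle + c_k = 0$, and Hoffman's lemma (in the $\ell^\infty$ norm) yields a constant $C_k$ with $d_\infty(x, F_k) \leq -C_k f(x)$ for all $x \in P_k$. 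On the pieces $P_k$ with $F_k = \emptyset$, compactness ensures $-f \geq \varepsilon_k > 0$ there, and since $\mathrm{diam}_\infty(P) \leq D < \infty$, we get $d_\infty(x, K \cap P) \leq D \leq (D/\varepsilon_k)(-f(x))$ on $P_k$. Setting $C := \max_k \max(C_k, D/\varepsilon_k)$ produces the required bound for every $x \in P$.

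Finally, the polyhedral subdivision, the affine pieces of $f$, the lower bounds $\varepsilon_k$, and the Hoffman constants $C_k$ all depend only on $p^X, p^Y$ (which determine $K$, $P$, $a$ and $f$), so $C$ is a constant depending only on $p^X$ and $p^Y$. The main obstacle is bookkeeping rather than depth: one must carefully identify the polyhedral subdivision induced by the sign patterns of the $p^X_i x^X_i - p^Y_i x^Y_i$, apply Hoffman's bound piece-by-piece with a uniform ($\ell^\infty$) norm, and aggregate the resulting constants into a single $C$; none of these steps is genuinely difficult, but the notation quickly becomes heavy, which is presumably why the proof is relegated to the appendix.
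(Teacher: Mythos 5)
Your argument is correct, but it is genuinely different from the paper's. The paper never subdivides $P$ into linearity regions of $f$: it writes $x=x^{K}+x^{K^{\bot}}$ using the \emph{orthogonal} decomposition of $E^2$ along $K$, observes that $f(x)=f(x^{K^{\bot}})$ (since adding an element of $K$ shifts both entries of each minimum by the same amount and $f$ vanishes on $K$), and then gets the key inequality $\|x^{K^{\bot}}\|_{\infty}\leq (a_{\min}/M)\,(-f(x))$ by rescaling $x^{K^{\bot}}$ onto the sphere $\{z\in K^{\bot}:\|z\|_{\infty}=a_{\min}\}\cap P$ and taking $M$ to be the (positive, by \eqref{propkp}) minimum of $-f$ there; a final correction $x^{K\cap P}:=\bigl(1-\|x^{K^{\bot}}\|_{\infty}/a_{\min}\bigr)x^{K}$ is needed because $x^{K}$ itself need not lie in $P$, and the case $-f(x)>M$ is handled trivially. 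Your route instead treats the statement as a Hoffman-type error bound for the concave piecewise-linear $f$ on the polytope $P$: you partition $P$ according to the sign patterns of $p^X_i x^X_i-p^Y_i x^Y_i$, apply Hoffman's lemma on each piece whose zero set $F_k=P_k\cap\{f=0\}\subset K\cap P$ is non-empty, and use compactness ($-f\geq\varepsilon_k>0$, bounded diameter) on the others; taking a nearest point of the compact convex set $K\cap P$ then gives the decomposition. Both proofs are sound and both constants depend only on $p^X,p^Y$ (through $a$ and $f$). What each buys: your version invokes a standard, more general tool and yields the slightly cleaner conclusion $d_\infty(x,K\cap P)\leq -Cf(x)$, at the cost of the $O(3^m)$ bookkeeping of sign patterns and of citing Hoffman's lemma; the paper's version is self-contained (only homogeneity of $f$, orthogonal projection, and one compactness minimum) and exploits the specific additive structure $f(x^K+w)=f(w)$, but must patch up the fact that the orthogonal projection onto $K$ need not land in $P$. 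Your scheme is a legitimate alternative proof.
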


\begin{proof}
Let $K^{\bot}$ be the orthogonal complement of $K$ in $E^2$ (for the usual Euclidean inner product defined on $E^2$ by, for $x,y\in E^2$, $x\cdot y:=x^X_1 y^X_1+\dots+x^X_m y^X_m+x^Y_1 y^Y_1+\dots+x^Y_m y^Y_m$). Let $x\in P$ (so $x\in E^2$) and let $(x^K,x^{K^{\bot}})$ be its orthogonal decomposition, i.e., $x^K\in K$, $x^{K^{\bot}}\in K^{\bot}$ and $x=x^K+x^{K^{\bot}}$. Without loss of generality, assume $x^{K^{\bot}}\neq 0$. For ease of notation, set $g=-f$. Let \begin{equation*}
a_{\min}=\min_{i\in I} a_i.
\end{equation*}
In order to bound the image of $x^{K^{\bot}}\!\!\!,$ we first rescale it to make it an 
element of $P$: it is easy to check that $y:=\left(\frac{a_{\min}}{\|x^{K^{\bot}}\|_{\infty}}\right)x^{K^{\bot}}\in P$. Now, consider the sphere, \begin{equation*}
S_{a_{\min}}:=\left\{z\in K^{\bot}\::\: \|z\|_{\infty} = a_{\min}\right\}.
\end{equation*}
Then, $S_{a_{\min}}\cap P$ is a non-empty compact set, so let \begin{equation*}
M=\min_{z\in S_{a_{\min}}\cap P} g(z).
\end{equation*}
Recalling \eqref{propkp}, $M>0$. Since $y\in S_{a_{\min}}\cap P$, $M\leq g(y)$ so that, using 
$g\left(x^{K^{\bot}}\right)=g(x)$,\begin{equation*}
\|x^{K^{\bot}}\|_{\infty}\leq \frac{a_{\min}}{M}g(x).
\end{equation*}
This is almost the desired result, except that $x^K$ might not be in $P$. Let us assume, firstly, that 
$g(x)\leq M$ (and therefore that $\|x^{K^{\bot}}\|_{\infty}\leq a_{\min}$). Let $x^{K\cap P}=\left(1-\frac{\|x^{K^{\bot}}\|_{\infty}}{a_{\min}}\right)x^K$ and let $x^r=\frac{\|x^{K^{\bot}}\|_{\infty}}{a_{\min}}x^K+x^{K^{\bot}}$. We next prove that $x^{K\cap P}\in K\cap P$. Since $x\in P$, for $i\in I$, \begin{align*}
\left(1-\frac{\|x^{K^{\bot}}\|_{\infty}}{a_{\min}}\right)x^K_i+\left(1-\frac{\|x^{K^{\bot}}\|_{\infty}}{a_{\min}}\right)x^{K^{\bot}}_i&\geq -\left(1-\frac{\|x^{K^{\bot}}\|_{\infty}}{a_{\min}}\right)a_i\\
x^{K\cap P}_i&\geq -a_i+ \frac{\|x^{K^{\bot}}\|_{\infty}}{a_{\min}}a_i - \left(1-\frac{\|x^{K^{\bot}}\|_{\infty}}{a_{\min}}\right)x^{K^{\bot}}_i\\
&\geq -a_i+ \|x^{K^{\bot}}\|_{\infty} - \left(1-\frac{\|x^{K^{\bot}}\|_{\infty}}{a_{\min}}\right)\|x^{K^{\bot}}\|_{\infty}\\
&\geq -a_i,
\end{align*}
and for $i\notin I$, $x^{K\cap P}_i=0$, since $x^{K\cap P}\in K$. So $x^{K\cap P}\in K\cap P$.

Let us turn to $x^r$. Since $a+x\in \Lambda^2$, $\|x\|_{\infty}\leq 1$. Moreover, $x^K$ is the orthogonal projection 
of $x$ so $\|x^K\|_{\infty}\leq \sqrt{2m}\|x\|_{\infty}\leq \sqrt{2m}$ and \begin{align*}
\|x^r\|_{\infty}&\leq \left(\frac{\sqrt{2m}}{a_{\min}}+1\right)\|x^{K^{\bot}}\|_{\infty}\\
&\leq \left(\frac{\sqrt{2m}}{a_{\min}}+1\right)\frac{a_{\min}}{M}g(x).
\end{align*}

Setting $C:=\left(\sqrt{2m}+a_{\min}\right)/M$, we have just proved that if $g(x)\leq M$, then 
there exist suitable $x^{K\cap P}$ and $x^r$ satisfying the lemma.  
Finally, if $g(x)>M$, we let $x^{K\cap P}=0$ and 
$x^r=x$, so that $\|x^r\|_{\infty}\leq 1<g(x)/M<Cg(x)$ which completes the proof. 
\end{proof}

\subsubsection{Separation of the parameters}
To begin with, we prove that $\max_{x\in P}z_n(a+x)$ can be written as a maximum over two 
kind of parameters, one belonging to $K$ in the variations $v^._i$, the other one being a 
small remaining term.

Let $x\in P$ be such that $z_n(a+x)=\max_{\lambda\in \Lambda^2}z_n(\lambda)$. Then, 
\begin{equation*}
-mn^\eta \leq z_n(a)\leq z_n(a+x)\leq \sqrt{n}f(x)+mn^{\eta},
\end{equation*}
and so 
\begin{equation}
-f(x)\leq 2mn^{\eta-1/2}.\numberthis \label{boundf}
\end{equation}
Now, let
\begin{equation*}
D=\left\{(x^{K\cap P},x^r)\in (K\cap P)\times E^2 \::\: x^{K\cap P}+x^r\in P\right\},
\end{equation*}
and, recalling the constant $C$ from Lemma \ref{proj}, let
\begin{equation*}
D_n=\left\{(x^{K\cap P},x^r)\in (K\cap P)\times E^2 \::\: \|x^r\|_{\infty} \leq 2Cmn^{\eta-1/2}, 
x^{K\cap P}+x^r\in P\right\}.
\end{equation*}
Then, for all $(x^{K\cap P},x^r)\in D$, set
\begin{align*}
\overline{z}_n(x^{K\cap P},x^r)&=z_n(a+x^{K\cap P}+x^r)\\ &=\sum_{i=1}^m\Bigg[\left(\sqrt{n} p^X_i x^{r,X}_i+v^X_i(a^X+x^{K\cap P,X}+x^{r,X})\right)\wedge \left(\sqrt{n} p^Y_i x^{r,Y}_i+v^Y_i(a^Y+x^{K\cap P,Y}+x^{r,Y})\right)\Bigg],
\end{align*} and applying Lemma~\ref{proj} to \eqref{boundf} gives 
$\max_{x\in D_n}\overline{z}_{n}(x)=\max_{x\in P}z_n(a+x)$.

Let us next define a slight modification of $\overline{z}_{n}$ by letting, for 
all $(x^{K\cap P},x^r)\in D_n$,
\begin{equation*}
\overline{z}'_n(x^{K\cap P},x^r) =
\sum_{i=1}^m\Bigg[\left(\sqrt{n} p^X_i x^{r,X}_i+v^X_i(a^X+x^{K\cap P,X})\right)\wedge \left(\sqrt{n} p^Y_i x^{r,Y}_i+v^Y_i(a^Y+x^{K\cap P,Y})\right)\Bigg].
\end{equation*} The parameters are now "separated".
For all $(x^{K\cap P},x^r)\in D_n$, by \eqref{boundv2},\begin{equation*}
\left\vert \overline{z}'_n(x^{K\cap P},x^r)-\overline{z}_n(x^{K\cap P},x^r)\right\vert
\leq m\left(n^{\eta}\sqrt{2Cm^2n^{\eta-1/2}}+n^{\eta-1/2}\right),
\end{equation*}
so that
\begin{equation}\label{approxd}
\left\vert \max_{x\in P}z_{n}(a+x)-\max_{x\in D_n}\overline{z}'_{n}(x)\right\vert=\left\vert 
\max_{x\in D_n}\overline{z}_{n}(x)-\max_{x\in D_n}\overline{z}'_{n}(x)\right\vert \leq m\left(n^{\eta}\sqrt{2Cm^2n^{\eta-1/2}}+n^{\eta-1/2}\right).
\end{equation}

\subsubsection{Independence of the parameters}

A major issue with $D_n$ is the condition $x^{K\cap P}+x^r\in P$. We would rather have a 
set of possible values for $x^r$ independent of the value of $x^{K\cap P}$. To try to achieve that 
goal, let 
\begin{equation*}
P_n=\left\{x\in E^2\::\: \forall i\in I, x^X_i\geq -a^X_i+2Cmn^{\eta-1/2}, 
x^Y_i\geq -a^Y_i+2Cmn^{\eta-1/2}, \forall i\notin I, x^X_i\geq 0, x^Y_i\geq 0 \right\}\subset P, 
\end{equation*}
and let $D'_n\subset D_n$ be given by
\begin{equation*}
D'_n=\left\{(x^{K\cap P_n},x^r)\in (K\cap P_n)\times E^2 \::\: \|x^r\|_{\infty} 
\leq 2Cmn^{\eta-1/2}, x^{K\cap P_n}+x^r\in P\right\}.
\end{equation*}
Now, recalling the definition $E'=\left\{x\in E : \forall i\in I^c, x_i\geq 0 \right\}\subset E$, 
we have that
\begin{equation*}
D'_n=\left\{(x^{K\cap P_n},x^r)\in (K\cap P_n)\times E'^2 \::\: \|x^r\|_{\infty} \leq 2Cmn^{\eta-1/2}\right\}.
\end{equation*}
For $(x^{K\cap P},x^r)\in D_n$, and for $n$ large enough so that $\frac{2Cmn^{\eta-1/2}}{a_{\min}}\leq 1$, it follows that, letting $x'^{K\cap P}:=\left(1-\frac{2Cmn^{\eta-1/2}}{a_{\min}}\right)x^{K\cap P}$, $(x'^{K\cap P},x^r)\in D'_n$, so by \eqref{boundv2}
\begin{equation}\label{approxef}
\left\vert \max_{x \in D'_n}\overline{z}'_{n}(x)-\max_{x \in D_n}\overline{z}'_{n}(x)\right\vert 
\leq \vert I\vert \left(n^{\eta}\sqrt{\frac{2Cm^2 n^{\eta-1/2}}{a_{\min}}} +n^{\eta-1/2}\right).
\end{equation}

\subsubsection{Connections with the functions of Lemma~\ref{lemme}}

Let us now prove that for $n$ large enough, $\max_{x \in D'_n}\overline{z}^{\eta}_{n}(x)=\max_{\lambda \in a+K\cap P_n}\mathfrak{m}\left(v^X(\lambda^X),v^Y(\lambda^Y)\right)$.  
Fix $x^{K\cap P_n}\in K\cap P_n$. Applying the previous lemma to 
$\nu:=v(a+x^{K\cap P_n})$, since $\|\nu\|_{\infty}\leq n^{\eta}$, by Lemma~\ref{exprm}
\begin{equation*}
\max_{\substack{x^r\in E'^2 \\ \|x^r\|_{\infty}\leq 2Cmn^{\eta}}} \sum_{i=1}^m \left[\left(p^X_i x^{r,X}_i+\nu^X_i\right)\wedge \left(p^Y_i x^{r,Y}_i+\nu^Y_i\right)\right]=\max_{x^r\in E'^2} \sum_{i=1}^m \left[\left(p^X_i x^{r,X}_i+\nu^X_i\right)\wedge \left(p^Y_i x^{r,Y}_i+\nu^Y_i\right)\right]=\mathfrak{m}(\nu),
\end{equation*}
and so
 \begin{align*}
\max_{\substack{x^r\in E'^2 \\ \|x^r\|_{\infty}\leq 2Cmn^{\eta-1/2}}} \overline{z}'_n (x^{K\cap P_n},x^r)&=\max_{\substack{x^r\in E'^2 \\ \|x^r\|_{\infty}\leq 2Cmn^{\eta-1/2}}}  \sum_{i=1}^m \left[\left(\sqrt{n}p^X_i x^{r,X}_i+\nu^X_i\right)\wedge \left(\sqrt{n}p^Y_i x^{r,Y}_i+\nu^Y_i\right)\right]\\
&=\max_{\substack{x^r\in E'^2 \\ \|x^r\|_{\infty}\leq 2Cmn^{\eta}}} \sum_{i=1}^m \left[\left(p^X_i x^{r,X}_i+\nu^X_i\right)\wedge \left(p^Y_i x^{r,Y}_i+\nu^Y_i\right)\right]\\
&=\mathfrak{m}(\nu).
\end{align*}
Finally,
\begin{equation}\label{eqofmax}
\max_{x \in D'_n}\overline{z}'_{n}(x)=\max_{x\in K\cap P_n} \max_{\substack{x\in E'^2 \\ \|x\|_{\infty}\leq 2Cmn^{\eta-1/2}}}  \overline{z}'_n (x^{K\cap P_n},x^r)=\max_{\lambda \in a+K\cap P_n}\mathfrak{m}\left(v^X(\lambda^X),v^Y(\lambda^Y)\right).
\end{equation}

\subsubsection{End of the proof}

Just as done with \eqref{approxef}, 
\begin{equation*}
\left\vert \max_{\lambda \in a+K\cap P}\mathfrak{m}(v(\lambda))-\max_{\lambda \in a+K\cap P_n}\mathfrak{m}(v(\lambda))\right\vert \leq \vert I\vert \left(n^{\eta}\sqrt{\frac{2Cm^2 n^{\eta-1/2}}{a_{\min}}} +n^{\eta-1/2}\right),
\end{equation*}
and so, using 
\eqref{approxd}, \eqref{approxef} and \eqref{eqofmax} (recall that $a+K\cap P=K_{\Lambda^2}$),
\begin{equation*}
\left\vert \max_{x\in P}z_{n}(a+x)-\max_{\lambda \in K_{\Lambda^2}}\mathfrak{m}(v(\lambda))\right\vert \leq \left(\frac{2\vert I\vert}{\sqrt{a_{\min}}}+m\right)\sqrt{2Cm^2}n^{\frac{6\eta-1}{4}}+(2\vert I\vert+m)n^{\eta-1/2}.
\end{equation*}

\section{Consistency with previous results and generalizations}

\subsection{Two words with identical distributions}

As stated in the introductory section, Theorem~\ref{unfthm} and the conjectured Theorem~\ref{samedistthm} are 
consequences of our main theorem.  Indeed, let $X_k$ and $Y_k$ ($k=1,2,\dots$) have the same distribution, then note that \begin{equation*}
I=\left\{i\in\ens{m}\::\: p^X_i=p_{\max}\right\},
\end{equation*}and so the multiplicity $k^*$ of $p_{\max}$ is equal to $\vert I\vert$ and we are 
in Case b1).  
It is also clear that \begin{equation*}
K_{\Lambda^2}=\left\{\lambda\in \Lambda^2 \::\: \forall i\notin I, \lambda^X_i=
\lambda^Y_i=0\right\}^2.
\end{equation*}
In this case, Lemma~\ref{exprm} simplifies and gives $\mathfrak{m}(\nu)=S^X\wedge S^Y$, so our theorem states that the limiting distribution of $Z_n$ is
\begin{align*}\label{distsamedist}
&\max_{\substack {\lambda\in K_{\Lambda^2}}} \sqrt{p_{\max}(1-p_{\max})}\left[\left(\sum_{i\in I} B^X_i\left(\sum_{j=1}^i\lambda^X_j\right)-B^X_i\left(\sum_{j=1}^{i-1}\lambda^X_j\right)\right)\wedge \left(\sum_{i\in I} B^Y_i\left(\sum_{j=1}^i\lambda^Y_j\right)-B^Y_i\left(\sum_{j=1}^{i-1}\lambda^Y_j\right)\right)\right] \\
=& \max_{0=t_0\leq t_1\leq\dots\leq t_{k^*}=1} \sqrt{p_{\max}(1-p_{\max})}\left[\left(\sum_{i=1}^{k^*} \left(B^X_i(t_i)-B^X_i(t_{i-1})\right)\right)\wedge \left(\sum_{i=1}^{k^*} \left(B^Y_i(t_i)-B^Y_i(t_{i-1})\right)\right)\right],\numberthis
\end{align*}
where $B^X$ and $B^Y$ are two independent $k^*$-dimensional Brownian motions on $[0,1]$ with respective 
covariance matrix defined in Theorem~\ref{theoreme}.  
The proof of Corollary 3.3 in \cite{HL1} shows that, by writing $B^X$ and $B^Y$ as 
linear combinations of independent standard Brownian motions, \eqref{distsamedist} is identical 
in law to  
\begin{multline*}
\max_{0=t_0\leq t_1\leq\dots\leq t_{k^*}=1}\sqrt{p_{\max}}\Bigg[ \left(\frac{\sqrt{1-k^*p_{\max}}-1}{k^*}\sum_{i=1}^{k^*} \overline{B}^X_i(1)+\sum_{i=1}^{k^*} \left(\overline{B}^X_i(t_i)-\overline{B}^X_i(t_{i-1})\right)\right)\wedge \\ \left(\frac{\sqrt{1-k^*p_{\max}}-1}{k^*}\sum_{i=1}^{k^*} \overline{B}^Y_i(1)+\sum_{i=1}^{k^*} \left(\overline{B}^Y_i(t_i)-\overline{B}^Y_i(t_{i-1})\right)\right)\Bigg],
\end{multline*}
where now $\overline{B}^X$ and $\overline{B}^Y$ are two independent 
$k^*$-dimensional standard Brownian motions on $[0,1]$. 
Dividing both sides by $\sqrt{p_{\max}}$, one 
obtains the conjectured Theorem~\ref{samedistthm} which reduces to 
Theorem~\ref{unfthm} when 
$k^*=m$.

\subsection{Generalization to any fixed sequence of blocks}\label{genblock}
As pointed out by an Associate Editor, and also developed, 
for binary alphabets, in \cite{YuZ}, a longest common increasing
subsequence can be viewed as a longest common subsequence where letters
are aligned in blocks. (For $LCI_n$, a non-void block only aligns a single type of letter 
and the first block consists of the letter $\alpha(1):=1$, then the second one consists of $\alpha(2):=2$ and so on, up to the last block eventually consisting of the letter $\alpha(m):=m$.)  So, more generally, one could investigate the longest common subsequences where letters are aligned in blocks of letters 
$\alpha(1),\dots,\alpha(l)$, for any $l\geq m$, and where $\alpha:\ens{l}\rightarrow\mathcal{A}_m$ is onto.  For any fixed $\alpha$, the length of the longest common subsequences where letters are aligned with blocks $\alpha$ is at most equal to $LC_n$, the length of the longest common subsequences, 
and moreover, $LC_n$ is the maximum of these lengths over all the possible block-orders $\alpha$ 
($l$ is not fixed). To pass 
from the block version to $LC_n$, there is, however, a major issue of interversion of limits.   In what 
follows, at first, we merely give for any fixed $\alpha$, the limiting law of the length of the (rescaled) longest common subsequences where letters are aligned in blocks $\alpha(1),\dots,\alpha(l)$, and then 
the corresponding limiting laws, when allowing 
for a fixed numbers of such blocks.

Firstly, defining for any $k\in\mathbb{N}$, $k\ge 2$, $\Lambda_k:=\{\lambda\in\left(\mathbb{R}_+\right)^{k}=\::\: \lambda_1+\dots+\lambda_k=1\}$, we claim that: 

\begin{equation}\label{egmax}
\max_{\substack {\lambda\in\Lambda_l^2}} \sum_{i=1}^l \left[\left(p^X_{\alpha(i)} \lambda^X_{\alpha(i)}\right)\wedge \left(p^Y_{\alpha(i)} \lambda^Y_{\alpha(i)}\right)\right]=\max_{\substack{\lambda\in\Lambda_m^2}} \sum_{i=1}^m \left[\left(p^X_i \lambda^X_i \right)
\wedge \left(p^Y_i \lambda^Y_i\right)\right].
\end{equation}
Indeed to see the validity of this equality, note that above the left-hand side is greater or equal than the right-hand side since $\alpha$ is onto, while it is also less or equal since we can partition $\ens{l}$ via 
$\alpha^{-1}(\{1\}), \alpha^{-1}(\{2\}), \dots, \alpha^{-1}(\{m\})$ and use the basic inequality 
$(a\wedge b) + (c\wedge d)\leq (a+c)\wedge(b+d)$.  

Next, to adapt the proof of our main theorem, we need to define the set $U^\alpha$, as well as all other quantities which depended on $m$ or $p$, with $l$ instead of $m$ and 
$p^\bullet_{\alpha(1)},\dots,p^\bullet_{\alpha(l)}$ instead of $p^\bullet_1,\dots,p^\bullet_m$.  
Note also that, when $l>m$, the quantities $p^\bullet_{\alpha(1)},\dots,p^\bullet_{\alpha(l)}$ do not form a 
probability mass function (their sum is not equal to one), but all their elements are positive 
which is enough to have everything well defined.

Formally, for example, 
$$U^\alpha:=\left\{u\in\mathbb{R}_+^l\::\: \frac{u_1}{p^X_{\alpha(1)}}+\dots+\frac{u_l}{p^X_{\alpha(l)}}\leq 1, \frac{u_1}{p^Y_{\alpha(1)}}+\dots+\frac{u_l}{p^Y_{\alpha(l)}}\leq 
1\right\},$$
$\phi^\alpha:\mathbb{R}^{l}\rightarrow \mathbb{R}$ is given by 
\begin{equation*}
\phi^\alpha:u\mapsto u_1+\dots+u_l,
\end{equation*}
and $I^\alpha$ is now defined to be the set of integers $i\in\ens{l}$ such that there exists 
$u^i\in L_{U^\alpha}$ with $u^i>0$. Using almost the same proof as the one showing the 
equality of the two maxima in \eqref{egmax}, we 
get $\alpha^{-1}(I)=I^\alpha$, where $I$ is defined as 
before. There is no need to redefine the 
various cases a), b1), b2) here since they coincide with those  
previously defined  when taking $p^\bullet_{\alpha(1)},\dots,p^\bullet_{\alpha(l)}$ instead of 
$p^\bullet_1,\dots,p^\bullet_m$.  For example, "there exists $u\in U^\alpha$ maximizing 
$\phi^\alpha$ over $U^\alpha$ such 
that $ \frac{u_1}{p^X_{\alpha(1)}}+\dots+\frac{u_l}{p^X_{\alpha(l)}}=1$ and $\frac{u_1}{p^Y_{\alpha(1)}}+\dots+\frac{u_l}{p^Y_{\alpha(l)}}<1$" is equivalent to Case a) defined in 
Section~\ref{secamean}.
Finally, the function $\mathfrak{m}$ defined in Lemma~\ref{exprm} can be extended naturally to $\left(\mathbb{R}^l\right)^2$.

Within this generalized setting, the proof of Lemma~\ref{lemme} carries over, giving us the 
following theorem for, $LC^\alpha_n$, the length of the longest common 
subsequences with blocks $\alpha(1),\dots,\alpha(l)$.

\begin{thm}\label{bloc}
Let $B^X$ and $B^Y$ be two independent $|I|$-dimensional Brownian motions defined on 
$[0,1]$ with respective covariance matrix $C^X$ defined by $C^X_{i,i}=1$ and $C^X_{i,j}=-\sqrt{\frac{p^X_{\alpha(i)} p^X_{\alpha(j)}}{(1-p^X_{\alpha(i)})(1-p^X_{\alpha(j)})}}$, for $i\neq j$ in $I$, and $C^Y$ defined in a similar fashion. For all $\lambda\in   K_{\Lambda^2}^\alpha$ and $i\in I^\alpha$, set \begin{align*}
V^{\alpha,X}_i(\lambda^X)=\sqrt{p^X_{\alpha(i)}(1-p^X_{\alpha(i)})}\left(B^X_{\alpha(i)}\left(\sum_{j=1}^i\lambda^X_j\right)-B^X_{\alpha(i)}\left(\sum_{j=1}^{i-1}\lambda^X_j\right)\right),\\
V^{\alpha,Y}_i(\lambda^Y)=\sqrt{p^Y_{\alpha(i)}(1-p^Y_{\alpha(i)})}\left(B^Y_{\alpha(i)}\left(\sum_{j=1}^i\lambda^Y_j\right)-B^Y_{\alpha(i)}\left(\sum_{j=1}^{i-1}\lambda^Y_j\right)\right).
\end{align*}
If there exists $u\in L_{U^{\alpha}}$ such that $ \frac{u_1}{p^X_{\alpha(1)}}+\dots+\frac{u_l}{p^X_{\alpha(l)}}=1$ 
and $\frac{u_1}{p^Y_{\alpha(1)}}+\dots+\frac{u_l}{p^Y_{\alpha(l)}}<1$, or equivalently if there 
exists $u\in L_{U}$ such that $ \frac{u_1}{p^X_1}+\dots+\frac{u_m}{p^X_1}=1$ 
and $\frac{u_1}{p^Y_1}+\dots+\frac{u_m}{p^Y_m}<1$ (Case a)), then  
\begin{equation}
\frac{LC_n^\alpha-ne_{\max}}{\sqrt{n}}\xRightarrow[n \to \infty]{}Z^a:
=\max_{\substack {\lambda^X\in J^\alpha}} \sum_{i\in I^\alpha} V^{\alpha,X}_i(\lambda^X).
\end{equation}
If for all $u\in L_{U^{\alpha}}$, $ \frac{u_1}{p^X_{\alpha(1)}}+\dots+\frac{u_l}{p^X_{\alpha(l)}}=1$ and $\frac{u_1}{p^Y_{\alpha(1)}}+\dots+\frac{u_l}{p^Y_{\alpha(l)}}=1$, or equivalently if 
for all $u\in L_{U}$, $ \frac{u_1}{p^X_1}+\dots+\frac{u_m}{p^X_1}=1$ 
and $\frac{u_1}{p^Y_1}+\dots+\frac{u_m}{p^Y_m}=1$ (Case b)), then 
\begin{equation}
\frac{LC_n^\alpha-ne_{\max}}{\sqrt{n}}\xRightarrow[n \to \infty]{}Z^b:=\max_{\substack {\lambda\in   K_{\Lambda^2}^\alpha}} \mathfrak{m}\left(V^{\alpha,X}(\lambda^X),V^{\alpha,Y}(\lambda^Y)\right),
\end{equation}
where, again, now $\mathfrak{m}$ is defined on $\left(\mathbb{R}^l\right)^2$.

\end{thm}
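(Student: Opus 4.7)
The plan is to follow the blueprint of the proof of Theorem~\ref{theoreme} verbatim, after a purely formal translation that replaces $\ens{m}$ by $\ens{l}$ and the pmf coordinates $p^{\bullet}_i$ by the strictly positive weights $p^{\bullet}_{\alpha(i)}$. First I would write
\begin{equation*}
LC_n^\alpha = \max_{\lambda\in\Lambda_l^2}\sum_{i=1}^l\Bigl[\bigl(np^X_{\alpha(i)}\lambda^X_i+\sqrt{n}\,\widetilde{V}^{n,X}_{\alpha(i)}(\lambda^X)\bigr)\wedge\bigl(np^Y_{\alpha(i)}\lambda^Y_i+\sqrt{n}\,\widetilde{V}^{n,Y}_{\alpha(i)}(\lambda^Y)\bigr)\Bigr],
\end{equation*}
in the style of \eqref{exprlcin}, where the counts $N^{\bullet,\alpha(i)}_{\cdot,\cdot}$ are now formed block-by-block along the order $\alpha(1),\alpha(2),\ldots,\alpha(l)$. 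The asymptotic mean analysis of Section~\ref{secamean} then carries over word-for-word with $U^\alpha,\phi^\alpha,L_{U^\alpha},I^\alpha$ in place of $U,\phi,L_U,I$, yielding $e_{\max}^\alpha=\max_{u\in U^\alpha}\phi^\alpha(u)$. The identity $\alpha^{-1}(I)=I^\alpha$ (obtained by partitioning $\ens{l}=\sqcup_{k\in\ens{m}}\alpha^{-1}(\{k\})$ and using the same $(a\wedge b)+(c\wedge d)\le(a+c)\wedge(b+d)$ argument as in \eqref{egmax}) matches $e_{\max}^\alpha=e_{\max}$ and aligns the trichotomy of Cases a), b1), b2) with the original one.

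Second, I would check that all structural results (Lemma~\ref{icasa}, Lemma~\ref{indep}, Lemma~\ref{exprm}, Lemma~\ref{proj}) continue to hold: their proofs only use positivity of the coordinates of $p^\bullet$, never that they sum to one, so they transfer directly. In particular, $\mathfrak{m}$ extends to $(\mathbb{R}^l)^2$ by the very same formulas of Lemma~\ref{exprm}. The probabilistic preamble (the polygonal approximations $B^{n,\bullet}_{\alpha(i)}$, the event $A_n^\eta$, Hoeffding's estimate \eqref{azuma}, and Borel--Cantelli) is likewise unchanged, reducing the problem to an analog of Lemma~\ref{lemme} for functions on $\Lambda_l^2$.

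Third, the heart of the proof, Lemma~\ref{lemme}, transcribes without modification. In Case a), the bound \eqref{boundofs} on coordinates outside $I^\alpha$ uses only that $\max_{i\in I^\alpha}p^X_{\alpha(i)}$ strictly dominates $\max_{i\notin I^\alpha}p^X_{\alpha(i)}$, which is the $\alpha$-version of Lemma~\ref{icasa}. In Case b), the decomposition $\Lambda_l^2=a+K^\alpha\cap P^\alpha+(K^\alpha)^\perp$, the projection Lemma~\ref{proj}, and the parameter-separation and parameter-decoupling steps all go through once the sets are re-indexed. Donsker's theorem together with the continuity of $\mathfrak{m}$ then concludes the argument exactly as in the proof of Theorem~\ref{theoreme}, via \eqref{probsection} transposed to $LC_n^\alpha$.

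The main subtlety, and the step I expect to be the chief obstacle, lies in identifying the limiting object as an $|I|$-dimensional (and not an $|I^\alpha|$-dimensional) Brownian motion. Whenever two indices $i,i'\in I^\alpha$ satisfy $\alpha(i)=\alpha(i')=k$, the corresponding counts are read from the \emph{same} letter-$k$ process, so the approximate Brownians $\widetilde{B}^{n,X}_{\alpha(i)}$ and $\widetilde{B}^{n,X}_{\alpha(i')}$ are one and the same path read over disjoint sub-intervals of $[0,1]$ determined by $\lambda^X$. The clean way to encode this is to apply the multivariate Donsker theorem to the centered indicator vectors $(\mathds{1}_{X_k=j}-p^X_j)_{j\in I}$, whose asymptotic covariance produces the matrix $C^X$ stated in the theorem, and then to evaluate the increments of the resulting $|I|$-dimensional Brownian motion $B^X$ along the disjoint intervals prescribed by $\alpha$ and $\lambda^X$ to define $V^{\alpha,X}_i(\lambda^X)$ (and symmetrically for $Y$). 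Once this identification is in place, weak convergence of $(LC_n^\alpha-ne_{\max})/\sqrt{n}$ to $Z^a$ or $Z^b$ follows from the continuous mapping theorem applied to the $\mathfrak{m}$-functional, just as in the proof of Theorem~\ref{theoreme}.
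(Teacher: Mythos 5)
Your proposal is correct and follows essentially the same route as the paper, which itself proves Theorem~\ref{bloc} by re-indexing all objects ($U^\alpha$, $\phi^\alpha$, $I^\alpha$, $\mathfrak{m}$ on $(\mathbb{R}^l)^2$) with $l$ and the positive weights $p^\bullet_{\alpha(i)}$ in place of $m$ and the pmf, observing that none of the structural lemmas uses that the weights sum to one, and then asserting that the proof of Lemma~\ref{lemme} carries over verbatim. Your explicit treatment of the Donsker step --- reading a single $|I|$-dimensional Brownian motion over the disjoint block intervals when $\alpha(i)=\alpha(i')$ --- is a welcome elaboration of a point the paper leaves implicit in the statement of the covariance matrix $C^X$, but it is not a different argument.
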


For instance, for $m=2$ and in the uniform case, the order 
$\alpha(1)=2,\alpha(2)=1,\alpha(3)=2$ gives the limiting distribution: 

\begin{equation}
\frac{LC_n^\alpha-ne_{\max}}{\sqrt{n}}\xRightarrow[n \to \infty]{}Z^b:=\max_{\substack {\lambda^X_1+\lambda^X_2
+\lambda^X_3=1 \\ \lambda^Y_1+\lambda^Y_2+\lambda^Y_3=1}} 
\mathfrak{m}\left(V^{\alpha,X}(\lambda^X),V^{\alpha,Y}(\lambda^Y)\right),
\end{equation}
i.e., 
\begin{equation}
\frac{LC_n^\alpha-ne_{\max}}{\sqrt{n}}\xRightarrow[n \to \infty]{}Z^b:=\frac{1}{2}\max_{\substack {\lambda^X_1+\lambda^X_2+\lambda^X_3=1 \\ \lambda^Y_1+\lambda^Y_2+\lambda^Y_3=1}} 
 \min_{\bullet\in\{X,Y\}} \left(B^\bullet_{2}(\lambda^\bullet_1)+B^\bullet_{1}(\lambda^\bullet_1+\lambda^\bullet_2)-B^\bullet_{1}(\lambda^\bullet_1)+B^\bullet_{2}(1)-B^\bullet_{2}(\lambda^\bullet_1+\lambda^\bullet_2)\right).  
\end{equation}

Also note that, sometimes, the limit in the above theorem 
is simply a normal random variable.   
Indeed, take $p^X_1=1/3,p^X_2=2/3,p^Y_1=1/4,p^Y_2=3/4$, and 
$\alpha(1)=1, \alpha(2)=2$, then we 
are in Case a), $I=\{2\}$ and:

\begin{equation}
\frac{LC_n^\alpha-ne_{\max}}{\sqrt{n}}\xRightarrow[n \to \infty]{}Z^a:=\frac{\sqrt{2}}{3}B^X_2(1).
\end{equation}

This is also, as one would expect, the limiting distribution of the number of 2's in the first word (which is almost equal to $LC_n^\alpha$). However, if we take $\alpha(1)=2,\alpha(2)=1,\alpha(3)=2$, 
the limit is more involved.

For $b\in\mathbb{N}$ such that $b\geq m$, let now $F_m^b$ denote the set of all 
surjections from  $\ens{b}$ to $\ens{m}$, and let $LC_n^{(b)}$ be the length of the longest common subsequences with $b\geq m$ blocks, with for each letter at least one block of this letter, and still allowing the blocks to have size zero. This is nothing but the maximum, over all the possible $\alpha\in F_m^b$, of $LC_n^\alpha$, so, recalling the discussion preceding the statement of Theorem~\ref{bloc}, we have:

\begin{thm}
In Case a),  \begin{equation}
\frac{LC_n^{(b)}-ne_{\max}}{\sqrt{n}}\xRightarrow[n \to \infty]{}Z^a:=\max_{\substack {\lambda^X\in J^\alpha \\ \alpha\in F_m^{b}}} \sum_{i\in I^\alpha} V^{\alpha,X}_i(\lambda^X).
\end{equation}
In Case b), \begin{equation}
\frac{LC_n^{(b)}-ne_{\max}}{\sqrt{n}}\xRightarrow[n \to \infty]{}Z^b:=\max_{\substack {\lambda\in   K_{\Lambda^2}^\alpha\\ \alpha\in F_m^{b}}} \mathfrak{m}\left(V^{\alpha,X}(\lambda^X),V^{\alpha,Y}(\lambda^Y)\right).  
\end{equation}
\end{thm}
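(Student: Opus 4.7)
The plan is to view $LC_n^{(b)}$ as a maximum over the finite set $F_m^b$ and to upgrade the marginal convergence supplied by Theorem~\ref{bloc} to a joint convergence, through a common coupling to the polygonal approximations $(B^{n,X}, B^{n,Y})$. First, I would invoke \eqref{egmax} together with the identity $\alpha^{-1}(I)=I^\alpha$ to observe that the centering constant $n e_{\max}$ does not depend on $\alpha$, whence
\[
\frac{LC_n^{(b)} - n e_{\max}}{\sqrt{n}} = \max_{\alpha \in F_m^b} \frac{LC_n^\alpha - n e_{\max}}{\sqrt{n}}.
\]
Since $F_m^b$ is a finite index set, it then suffices to establish the joint convergence in distribution of the random vector $\bigl((LC_n^\alpha - n e_{\max})/\sqrt{n}\bigr)_{\alpha \in F_m^b}$ and to apply the continuous mapping theorem to the coordinate-wise maximum.

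The joint convergence is where the common coupling is essential. The polygonal processes $B^{n,X}$ and $B^{n,Y}$ are built once and for all from $(X_k)$ and $(Y_k)$, and do not depend on $\alpha$. Applying Lemma~\ref{lemme} with the parameters $p^X_{\alpha(1)},\dots,p^X_{\alpha(l)}$ and $p^Y_{\alpha(1)},\dots,p^Y_{\alpha(l)}$ attached to each $\alpha$, one obtains, on the high-probability event $A^\eta_n$ of \eqref{azuma}, a continuous functional $\Phi_\alpha$ of $(B^{n,X}, B^{n,Y})$, namely $\max_{\lambda^X \in J^\alpha} z^a(\lambda^X)$ in Case a), or $\max_{\lambda \in K^\alpha_{\Lambda^2}} z^b(\lambda)$ in Case b), such that $(LC_n^\alpha - n e_{\max})/\sqrt{n}$ is within some $\varepsilon_n^\alpha \to 0$ of $\Phi_\alpha(B^{n,X}, B^{n,Y})$. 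Because $F_m^b$ is finite, $\max_{\alpha \in F_m^b} \varepsilon_n^\alpha \to 0$, and hence the difference between our random vector and $(\Phi_\alpha(B^{n,X}, B^{n,Y}))_{\alpha \in F_m^b}$ tends to zero in probability.

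Donsker's theorem then yields $(B^{n,X}, B^{n,Y}) \xRightarrow[n \to \infty]{} (B^X, B^Y)$ in $C([0,1], \mathbb{R}^m)^2$, where $B^X$ and $B^Y$ are independent with the covariance structure prescribed in Theorem~\ref{theoreme}. A first application of the continuous mapping theorem to $(b^X, b^Y) \mapsto \bigl(\Phi_\alpha(b^X, b^Y)\bigr)_{\alpha \in F_m^b}$ provides the required joint convergence, and a second application to the maximum over the finite set $F_m^b$ concludes the proof. The main technical point, though not a genuine obstacle, is verifying the continuity of each $\Phi_\alpha$ on $C([0,1], \mathbb{R}^m)^2$; this is exactly the property used at the very end of the proof of Theorem~\ref{theoreme}, and follows from the uniform continuity of the Brownian increments on the compact parameter sets $J^\alpha$ and $K^\alpha_{\Lambda^2}$, together with the continuity of $\mathfrak{m}$ supplied by Lemma~\ref{exprm} in Case b).
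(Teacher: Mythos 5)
Your proposal is correct and takes essentially the same route as the paper, whose proof is the single remark that the argument for the main theorem carries over with $p^\bullet_{\alpha(i)}$ in place of $p^\bullet_i$. Your elaboration---the finite maximum over $F_m^b$, the common coupling of all the $LC_n^\alpha$ to the single pair $(B^{n,X},B^{n,Y})$, the uniform-in-$\alpha$ error from Lemma~\ref{lemme}, and the two applications of the continuous mapping theorem---is exactly the content the paper leaves implicit.
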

\begin{proof}
The proof of this theorem follows lines of 
the proof of our previous main result, considering 
$p^\bullet_{\alpha(i)}$ instead of $p^\bullet_i$.
\end{proof}

Note that $LC_n$, the length of the longest common subsequences without any conditions on blocks, corresponds to $LC_n^{(n+m)}$ (or to be more precise, $LC_n^{(b)}$ for any $b\geq m+n-2$: this is because when, say, there are only two kind of letters involved in the longest common word, we have to take $m-2$ additional empty blocks to make $\alpha$ onto).  
Although the above theorem requires a fixed number of blocks, say, $b$, it is nevertheless 
noteworthy that no matter this fixed number, 
$$\lim_{n\to+\infty}\frac{\mathbb{E}LC_n^{(b)}}{n}=e_{\max}.$$

\subsection{Countably infinite alphabet}
To continue, let us consider, as in \cite[Section 4]{HL1}, the generalization to 
countably infinite alphabets. Let the alphabet be $\mathbb{N}^*=\{1,2,\dots\}$, 
let $(p'^X_i)_{i\geq 1}$ and $(p'^Y_i)_{i\geq 1}$ be two probability mass functions on this 
alphabet, we are now 
interested in $LCI^{\infty}_n$, the length of the longest common and increasing subsequences 
over this countably infinite alphabet.  
Let \begin{equation*}
\Lambda^{\infty}=\left\{\lambda\in\left(\mathbb{R}_+\right)^{\mathbb{N}^*}=[0,+\infty)^{\mathbb{N}^*}\::\: \sum_{i=1}^{+\infty}\lambda_i=1\right\},
\end{equation*} and let 
\begin{equation}
e_{\max}^{\infty}=\sup_{\substack{\lambda\in(\Lambda^\infty)^2}} \sum_{i=1}^{+\infty} \left[\left(p'^X_i\lambda^X_i\right)\wedge \left(p'^Y_i\lambda^Y_i\right)\right].
\end{equation}
Let $m\in\mathbb{N}, m\geq 2$ be such that $\sum_{i=m}^{+\infty} p'^X_i < e_{\max}^{\infty}$ 
and $\sum_{i=m}^{+\infty} p'^Y_i <e_{\max}^{\infty}$. Let us consider the distributions over $\{1,\dots,m\}$ obtained by replacing all the letters greater or equal to $m$ by $m$, namely, let $p^X_i=p'^X_i$ for $i<m$ and $p^X_m:=\sum_{i=m}^{+\infty} p'^X_i$, and let $p^Y_i$, $1\leq i\leq m$, be defined in a similar fashion.  Let now $LCI_n$ be the length 
of the longest increasing subsequences formed by 
replacing all the letters greater or equal to $m$ by $m$, i.e., the  longest 
common and increasing 
subsequences on $\{1, \dots, m\}$ associated with the probability mass functions $p'^X$ 
and $p'^Y$.  Next we argue, via a sandwiching argument,  
that when properly centered and scaled (note that $e_{\max}^{\infty}=e_{\max}$), 
$LCI^{\infty}_n$ and $LCI_n$  tend to the same limit. 
Indeed, let $LCI^*_n$ be 
the length of the longest common and  increasing subsequences not using the letter $m$, 
i.e., the length of the longest common and increasing subsequences on $\{1, \dots, m-1\}$ associated with the probability mass functions $p'^X$ 
and $p'^Y$ or, equivalently,  
$p^X$ and $p^Y$.   
Since $m\notin I$ (where $I$ is defined with the distribution 
$(p^X_i)_{1\leq i\leq m}$ and $(p^Y_i)_{1\leq i\leq m}$), 
$(LCI^*_n - ne_{\max})/\sqrt{n}$ and $(LCI_n - ne_{\max})/\sqrt{n}$ converge to the same limiting distribution. 
But, 
\begin{equation*}
\frac{LCI^*_n - ne_{\max}}{\sqrt{n}}\leq \frac{LCI^{\infty}_n - ne_{\max}}{\sqrt{n}}\leq \frac{LCI_n - ne_{\max}}{\sqrt{n}},
\end{equation*} completing the proof.

From the proofs presented above, the passage from two to three or more sequences is
clear: the minimum over two Brownian functionals becomes a minimum over three or more
Brownian functionals, and such a passage applies to the cases touched upon above and below.  

Throughout the text, the two sequences $(X_k)_{k\ge 1}$ and $(Y_k)_{k\ge 1}$ are assumed to be independent 
with respective i.i.d.~components.  In view of \cite{HL2} or \cite{HK1}, one expects that the i.i.d.~assumption 
could be replaced by a Markovian one or even a hidden Markovian one.  
Moreover, one further expects that the independence of the two sequences 
is unnecessary and that a potential dependence structure 
between the two sequences would carry over to corresponding $2m$-dimensional Brownian functionals, 
another case at hand could be the hidden Markov framework.   Finally, it should also be of interest 
(as already done in \cite{breton2017limiting} for uniform letters) to study the ramifications/connections 
of our results with last passage percolation.

\section*{Appendix: proof of Lemma~\ref{exprm}}
\begin{proof}
Define $f_\nu:E'^2\rightarrow \mathbb{R}$ by 
$f_\nu:x\mapsto \sum_{i=1}^m \left[\left(p^X_i x^X_i
+\nu^X_i\right)\wedge \left(p^Y_i x^Y_i+\nu^Y_i\right)\right]$. In order to prove that $\mathfrak{m}(\nu)$ is well defined and \eqref{local}, it is enough to prove that for all $x\in E'^2$, there exists $x'\in E'^2$ such that $\|x'\|_{\infty}\leq 2Cm \|\nu\|_{\infty}$ and $f_\nu (x')\geq f_\nu (x)$. 
Let $x\in E'^2$.  Firstly, assume that $x\in P$ 
(recalling \eqref{defP}). If $f_\nu(x)<f_\nu(0)$, taking $x'=0$ works, so assume 
$f_\nu(x)\geq f_\nu(0)$. By \eqref{elem} (applied twice), \begin{equation*}
-m\|\nu\|_\infty \leq f_\nu(0)\leq f_\nu(x)\leq m\|\nu\|_\infty+f(x)
\end{equation*}
hence $-f(x)\leq 2m\|\nu\|_\infty$ and, by Lemma \ref{proj}, there exists $x^{K\cap P}\in K\cap P$ and $x^r\in E^2$ such that $x=x^{K\cap P}+x^r$ and $\|x^r\|_\infty\leq -Cf(x)\leq 2Cm\|\nu\|_\infty$. 
But from the definition of $K$, $f_\nu(x^{K\cap P}+x^r)=f(x^{K\cap P})+f_\nu(x^r)$, and by $\eqref{propkp}$, $f(x^{K\cap P})=0$ so $f_\nu(x)=f_\nu(x^r)$. Moreover, since $x\in P$ and 
$x^{K\cap P,\bullet}_i=0$ for all $i\in I^c$, $x^r\in E'^2$.

Now, if we do not assume $x\in P$ anymore, observe that for $\varepsilon>0$ small enough, 
$\varepsilon x\in P$, so $f_{\varepsilon\nu}(x')\geq f_{\varepsilon\nu}(\varepsilon x)$ for some $x'\in E'^2$ such that $\|x'\|_{\infty}\leq 2Cm \|\varepsilon\nu\|_{\infty}$. Finally, dividing by $\varepsilon$, 
$f_\nu ((1/\varepsilon)x')\geq f_\nu(x)$ where  $\|(1/\varepsilon)x'\|_{\infty}\leq 2Cm \|\nu\|_{\infty}$.

In Case b1), let us begin with the subcase $I=\{1\}$.   In this instance, $p^X_1=p^Y_1=e_{\max}$, while for all $1<i\leq m$, $p^X_i< e_{\max}$ or $p^Y_i<e_{\max}$ (otherwise $i$ would be in $I$).  
We now show that ``the maximum of $f_\nu$ is realized with the first letter plus one other letter'', more precisely, there exists $x\in E'^2$ such that $f_\nu(x)=\mathfrak{m}(\nu)$ and $|\{i\in\{2,\dots,m\}: x^X_i\neq 0 \text{ or } x^Y_i\neq 0\}|\leq 1$. Indeed, using the same method than in the proof of Lemma~\ref{reductotwo}, keeping in mind $\nu^\bullet_2=\dots=\nu^\bullet_m=0$, one can see that there exists some $x$ maximizing $f_\nu$ such that $\{i\in\{1,\dots,m\}: x^X_i\neq 0 \text{ or } 
x^Y_i\neq 0\}$ has at most two elements, and they can't both belong to $\{2,\dots,m\}$ otherwise they would be null (by the definition of $E'$).

Returning to the proof of the lemma, we have shown that 
\begin{equation}
\max_{x\in E'^2}f_\nu(x)=\max_{i_0\in\{2,\dots,m\}}\sup_{\substack{x\in E'^2\\ 
\forall i\in\{2,\dots,m\}\setminus\{i_0\}, x^\bullet_i=0}}f_\nu(x).
\end{equation}
Fixing $i_0\in\{2,\dots,m\}$, we have 
\begin{equation}
\sup_{\substack{x\in E'^2\\ \forall i\in\{2,\dots,m\}\setminus\{i_0\}, x^\bullet_i=0}}f_\nu(x)=\sup_{t^X,t^Y>0}\left[(\nu^X_1-e_{\max}t^X)\wedge(\nu^Y_1-e_{\max}t^Y)+(p^X_{i_0}t^X)\wedge(p^Y_{i_0}t^Y)\right].  
\end{equation} 

It is then easily seen that this last supremum does not change with the additional condition $p^X_{i_0}t^X=p^Y_{i_0}t^Y$.  (Indeed, if, for example, $p^X_{i_0}t^X>p^Y_{i_0}t^Y$, reducing $t^X$ to transform 
this strict inequality into equality will only increase the sum of the two minima 
in the definition of $f_\nu$.)  Hence, 
\begin{align*}
\sup_{\substack{x\in E'^2\\ \forall i\in\{2,\dots,m\}\setminus\{i_0\}, x^\bullet_i=0}}f_\nu(x)&=\sup_{t^X>0}\left[\left(\nu^X_1-e_{\max}t^X+p^X_{i_0}t^X\right)\wedge\left(\nu^Y_1-e_{\max}\frac{p^X_{i_0}}{p^Y_{i_0}}t^X+p^X_{i_0}t^X\right)\right]\\
&=\sup_{t^X>0}\left[\left(\nu^X_1+(p^X_{i_0}-e_{\max})t^X\right)\wedge\left(\nu^Y_1+\frac{p^X_{i_0}}{p^Y_{i_0}}(p^Y_{i_0}-e_{\max})t^X\right)\right].
\end{align*}

Since $i_0\notin I$, it is impossible for both $p^X_{i_0}-e_{\max}$ and $p^Y_{i_0}-e_{\max}$ to be positive, so this last supremum is attained at $t^X=0$ (and is equal to $\nu^X_1\wedge\nu^Y_1$) 
unless 
$\nu^X_1<\nu^Y_1$ and $p^X_{i_0}-e_{\max}>0$, or 
$\nu^X_1>\nu^Y_1$ and $p^Y_{i_0}-e_{\max}>0$, in which case the supremum  is attained at 
$t^X=\frac{p^Y_{i_0}}{e_{\max}}\frac{\nu^Y_1-\nu^X_1}{p^X_{i_0}-p^Y_{i_0}}$, 
a value at which the two sides in the above minimum are equal 
to each other. So if $\nu^X_1<\nu^Y_1$ and $p^X_{i_0}-e_{\max}>0$, or $\nu^X_1>\nu^Y_1$ 
and $p^Y_{i_0}-e_{\max}>0$, then
\begin{equation}
\sup_{\substack{x\in E'^2\\ \forall i\in\{2,\dots,m\}\setminus\{i_0\}, x^\bullet_i=0}}f_\nu(x)=\frac{p^X_{i_0}(e_{\max}-p^Y_{i_0})}{e_{\max}(p^X_{i_0}-p^Y_{i_0})}\nu^X_1+\frac{p^Y_{i_0}(p^X_{i_0}-e_{\max})}{e_{\max}(p^X_{i_0}-p^Y_{i_0})}\nu^Y_1.
\end{equation}
Assuming that $\nu^X_1<\nu^Y_1$, we see that in this case 
$\mathfrak{m}(\nu^X,\nu^Y)=s_X S^Y+t_X S^X$.  This remains true if $S^X=S^Y$ (in this case, 
$\mathfrak{m}(\nu^X,\nu^Y)=S^X=S^Y$), and, similarly, when $S^Y\leq S^X$. 
The proof of Case b1) is therefore done when $I=\{1\}$.

Still in Case b1), but without the assumption that $I=\{1\}$, assume, without loss of generality, 
that $I=\{1,\dots,k\}$, $k\ge 2$.  
Define $\tilde{\nu}$ by $\tilde{\nu}^\bullet_1=S^\bullet$ and $\tilde{\nu}^\bullet_i=0$, for all 
$i\geq 2$. Let $x^0\in E'^2$ be defined by $x^{0,Y}=0$, $x^{0,X}_1=(S^X-S^Y+\nu^Y_1-\nu^X_1)/{e_{\max}}$, $x^{0,X}_i=(\nu^Y_i-\nu^X_i)/{e_{\max}}$, for all $i\in\{2,\dots,k\}$, and $x^{0,\bullet}_i=0$ for all $i\in\{k+1,\dots,m\}$. Note that for all $x\in E'^2$, $f_\nu(x+x^0)=f_{\tilde{\nu}}(x)$, 
so $\mathfrak{m}(\nu)=\mathfrak{m}(\tilde{\nu})$.  Moreover, defining $x'$ via $x'^\bullet_1=x^\bullet_1+\dots+x^\bullet_k$, $x'^\bullet_i=0$, for $i\in\{2,\dots,k\}$, and $x'^\bullet_i=x^\bullet_i$ everywhere else, we have $x'\in E'^2$, and \begin{multline}
\left(e_{\max}(x^X_1+\dots+x^X_k)+\tilde{\nu}^X_1\right)\wedge\left(e_{\max}(x^Y_1+\dots+x^Y_k)+\tilde{\nu}^Y_1\right)\geq (e_{\max}x^X_1+\tilde{\nu}^X_1)\wedge(e_{\max}x^Y_1+\tilde{\nu}^Y_1)\\
+e_{\max}(x^X_2+\dots+x^X_k)\wedge (x^Y_2+\dots+x^Y_k),  
\end{multline}
\begin{multline}
\left(e_{\max}(x^X_1+\dots+x^X_k)+\tilde{\nu}^X_1\right)\wedge\left(e_{\max}(x^Y_1+\dots+x^Y_k)+\tilde{\nu}^Y_1\right)\geq (e_{\max}x^X_1+\tilde{\nu}^X_1)\wedge(e_{\max}x^Y_1+\tilde{\nu}^Y_1)\\
+e_{\max}(x^X_2\wedge x^Y_2)+\dots+(x^X_k\wedge x^Y_k).  
\end{multline}
Hence, $f_{\tilde{\nu}}(x')\geq f_{\tilde{\nu}}(x)$, and therefore  
\begin{equation}
\mathfrak{m}(\tilde{\nu})=
\max_{\substack{x\in E'^2\\ \forall i\in\{2,\dots,k\}, x^\bullet_i=0}}f_{\tilde{\nu}}(x).  
\end{equation}
Now applying the subcase $I=\{1\}$ concludes the proof of Case b1).

In Case b2),  again assume without loss of generality that $I=\{1,\dots,k\}$, $k\ge 2$.  
Let $L_1=(1,0,\dots,0,-1,0,\dots,0)\in\mathbb{R}^{2k}$, having 
$k-1$ zeros between the two non-zero coordinates, 
let $L_2=(0,1,0,\dots,0,-1,0,\dots,0)$ (still with $k-1$ zeros between 
the two non-zero coordinates), and iterate this process up to $L_{k}$.  
Let also $\widetilde{P^X}$ be the concatenation of $P^X\in\mathbb{R}^{k}$ with  
$0\in\mathbb{R}^{k}$, and let $\widetilde{P^Y}$ be the concatenation 
of $0\in\mathbb{R}^{k}$ with $P^Y\in\mathbb{R}^{k}$. 
The vectors $L_1,\dots,L_{k},\widetilde{P^X},\widetilde{P^Y}$ are linearly independent since, 
as already seen in Lemma~\ref{indep}, $P^X$ and $P^Y$ are linearly independent. 
Now, let $Q$ be a $2k\times 2k$ invertible matrix with first rows 
$L_1,\dots,L_{k},\widetilde{P^X},\widetilde{P^Y}$ (for example, to form 
such a matrix $Q$, one could complete the first columns with vectors from 
the canonical basis), let $\Delta\in\mathbb{R}^{2k}$ be defined by 
\begin{equation}
\Delta_i:=\begin{cases} 
  \nu^Y_i-\nu^X_i & 
  \text{if } i\in\ens{k} \\
      0, & \text{if } i\in\{k+1,\dots,2k\}, \\
      \end{cases}
\end{equation} and let $u\in\mathbb{R}^{2k}$ be defined by\begin{equation}
u_i:=\begin{cases} 
  (Q^{-1}\Delta)_i & 
  \text{if } i\in\ens{k} \\
      0, & \text{if } i\in\{k+1,\dots,2k\}. \\
      \end{cases}
\end{equation}
We have $u^X_i-u^Y_i=\nu^Y_i-\nu^X_i$ (where $u^X$ is the vector of the first $k$ coordinates 
of $u$ and $u^Y$ the vector of the last $k$ coordinates of $u$) for all $i\in\ens{k}$ : these conditions stem from the rows $L_1,\dots,L_{k}$.  Moreover, $u^X_1/p^X_1+\dots+u^X_m/p^X_m=u^Y_1/p^Y_1+\dots+u^Y_m/p^Y_m=0$ (conditions stemming from the rows $\widetilde{P^X},\widetilde{P^Y}$).  Then,  expand $u^X$ and $u^Y$ to $\mathbb{R}^m$ by filling with zeros, so that $u:=(u^X,u^Y)$ is now in $\left(\mathbb{R}^{m}\right)^2$.  
Setting, for all $i\in \ens{m}$, 
$y^X_i:=u^X_i/p^X_i, y^Y_i:=u^Y_i/p^Y_i$, lead to $y\in \left(\mathbb{R}^m\right)^2$, more precisely $y\in E'^2$ such that for all $i\in \ens{m}, p^X_i y^X_i+\nu^X_i=p^Y_i y^Y_i+\nu^Y_i$, 
with moreover 
\begin{align*}
\sum_{i=1}^m \left[\left(p^X_i y^X_i+\nu^X_i\right)\wedge \left(p^Y_i y^Y_i+\nu^Y_i\right)\right]=\sum_{i\in I} \left(\frac{p^X_i y^X_i+\nu^X_i+p^Y_i y^Y_i+\nu^Y_i}{2}\right).  
\end{align*}
Setting  $U^X:=(u^X_i)_{i\in I}\in \mathbb{R}^k$, $U^Y:=(u^Y_i)_{i\in I}$, $R^X:=(\nu^X_i)_{i\in I}$ and $R^Y:=(\nu^Y_i)_{i\in I}$, 
the above expression becomes  
\begin{align*}
\sum_{i=1}^m \left[\left(p^X_i y^X_i+\nu^X_i\right)\wedge \left(p^Y_i y^Y_i+\nu^Y_i\right)\right]&
=\frac{1}{2}(U^X+R^X+U^Y+R^Y)\cdot (1)_{i\in I}.  
\end{align*}

With the notations of Lemma~\ref{indep},
\begin{align*}
U^X\cdot (1)_{i\in I}&=U^X\cdot(sP^X+tP^Y)\\
&=U^X\cdot tP^Y\\
&=(U^X-U^Y)\cdot tP^Y\\
&=(R^Y-R^X)\cdot tP^Y.  
\end{align*}
Similarly, $U^Y\cdot (1)_{i\in I}=(R^X-R^Y)\cdot sP^X$. So, 
\begin{align*}
\sum_{i=1}^m \left[\left(p^X_i y^X_i+\nu^X_i\right)\wedge \left(p^Y_i y^Y_i+\nu^Y_i\right)\right]&=\frac{1}{2}(R^X-R^Y)\cdot(sP^X-tP^Y)+\frac{1}{2}(R^X+R^Y)\cdot(sP^X+tP^Y)\\
&=R^X\cdot sP^X+R^Y\cdot tP^Y\\
&=\sum_{i\in I} \left(\frac{s}{p^X_i}\nu^X_i+ \frac{t}{p^Y_i}\nu^Y_i\right).
\end{align*}
This shows that $\max_{x\in E'^2} \sum_{i=1}^m \left[\left(p^X_i x^X_i
+\nu^X_i\right)\wedge \left(p^Y_i x^Y_i+\nu^Y_i\right)\right] \geq \sum_{i\in I}\left({s\nu^X_i}/{p^X_i}+ 
{t\nu^Y_i}/{p^Y_i}\right)$. Now let $x\in E'^2$, 
\begin{align*}
&\sum_{i=1}^m\! \left[\left(p^X_ix^X_i+\nu^X_i\!\right)\wedge \left(p^Y_ix^Y_i+\nu^Y_i\!\right)\right]
\!\!-\sum_{i\in I}\! \left(\!
\frac{s}{p^X_i}\nu^X_i+ \frac{t}{p^Y_i}\nu^Y_i\!\!\right)\\&\qquad\qquad=\sum_{i=1}^m \!\left[\left(p^X_ix^X_i\!+\nu^X_i\right)\!\wedge \left(p^Y_ix^Y_i\!+\nu^Y_i\!\right)\right]\!\!-\sum_{i=1}^m \left[\left(p^X_i y^X_i+\nu^X_i\right)\wedge \left(p^Y_i y^Y_i+\nu^Y_i\right)\right]\\
&\qquad\qquad=\sum_{i=1}^m \left[\left(p^X_i (x-y)^X_i\right)\wedge \left(p^Y_i (x-y)^Y_i\right)\right]\\
&\qquad\qquad=f(x-y).
\end{align*}
We have $x-y\in E'^2$ (recall, also, that $y_i=0$ for all $i\in I^c$), so for some $c>0$, $(x-y)/c
\in P$, and then $f((x-y)/c)\leq 0$, so $f(x-y)\leq 0$.  
Hence  $\sum_{i=1}^m \left[\left(p^X_i x^X_i+\nu^X_i\right)\wedge \left(p^Y_i x^Y_i+\nu^Y_i\right)\right] -\sum_{i\in I} \left({s\nu^X_i/p^X_i}+ {t\nu^Y_i/p^Y_i}\right)\leq 0$ and, finally,  
$\max_{x\in E'^2} \sum_{i=1}^m \left[\left(p^X_i x^X_i+\nu^X_i\right)\wedge \left(p^Y_i x^Y_i+\nu^Y_i\right)\right]=\sum_{i\in I} \left({s\nu^X_i/p^X_i}+ {t\nu^Y_i/p^Y_i}\right)$.
\end{proof}

\end{document}